\documentclass[11pt,a4paper]{article}

\usepackage[margin=1in]{geometry}
\usepackage[T1]{fontenc}
\usepackage[utf8]{inputenc}
\usepackage{lmodern}
\usepackage{amsmath,amssymb,amsfonts,amsthm,mathtools,mathrsfs,bm}
\usepackage{enumitem}
\usepackage{tikz}
\usetikzlibrary{arrows.meta,calc,positioning}
\usepackage{xcolor}
\usepackage{aliascnt}
\usepackage[
colorlinks=true,
linkcolor=blue,
citecolor=red,
urlcolor=blue
]{hyperref}

\usepackage[nameinlink,capitalize]{cleveref}

\newcommand{\G}{\mathcal G}
\newcommand{\K}{\mathcal K}
\newcommand{\E}{\mathcal E}

\newcommand{\Pp}{\mathcal P}
\newcommand{\D}{\mathcal D}
\newcommand{\N}{\mathcal N}
\newcommand{\Y}{Y}
\newcommand{\R}{\mathbb R}
\newcommand{\C}{\mathbb C}
\newcommand{\dd}{\,\mathrm d}
\newcommand{\eps}{\varepsilon}
\newcommand{\ess}{\mathrm{ess}}
\newcommand{\supp}{\operatorname{supp}}
\newcommand{\dist}{\operatorname{dist}}
\newcommand{\dom}{\operatorname{dom}}
\newcommand{\Ree}{\operatorname{Re}}

\newcommand{\abs}[1]{\lvert #1\rvert}
\newcommand{\norm}[1]{\lVert #1\rVert}

\tikzset{
	vertex/.style={circle,fill=black,inner sep=1.7pt},
	openvertex/.style={circle,draw=black,fill=white,inner sep=1.7pt},
	edge/.style={line width=0.8pt},
	halfray/.style={line width=0.8pt,-{Latex[length=2mm]}},
	pendant/.style={line width=1.15pt,blue!70!black},
	core/.style={line width=1pt,orange!80!black},
	label/.style={font=\small}
}

\numberwithin{equation}{section}

\theoremstyle{plain}
\newtheorem{theorem}{Theorem}[section]
\newaliascnt{proposition}{theorem}
\newtheorem{proposition}[proposition]{Proposition}
\aliascntresetthe{proposition}
\newaliascnt{lemma}{theorem}
\newtheorem{lemma}[lemma]{Lemma}
\aliascntresetthe{lemma}
\newaliascnt{corollary}{theorem}
\newtheorem{corollary}[corollary]{Corollary}
\aliascntresetthe{corollary}

\theoremstyle{definition}
\newaliascnt{definition}{theorem}
\newtheorem{definition}[definition]{Definition}
\aliascntresetthe{definition}
\newaliascnt{assumption}{theorem}

\aliascntresetthe{assumption}
\newaliascnt{example}{theorem}
\newtheorem{example}[example]{Example}
\aliascntresetthe{example}

\theoremstyle{remark}
\newaliascnt{remark}{theorem}
\newtheorem{remark}[remark]{Remark}
\aliascntresetthe{remark}

\crefname{theorem}{Theorem}{Theorems}
\Crefname{theorem}{Theorem}{Theorems}
\crefname{proposition}{Proposition}{Propositions}
\Crefname{proposition}{Proposition}{Propositions}
\crefname{lemma}{Lemma}{Lemmas}
\Crefname{lemma}{Lemma}{Lemmas}
\crefname{corollary}{Corollary}{Corollaries}
\Crefname{corollary}{Corollary}{Corollaries}
\crefname{definition}{Definition}{Definitions}
\Crefname{definition}{Definition}{Definitions}
\crefname{assumption}{Assumption}{Assumptions}
\Crefname{assumption}{Assumption}{Assumptions}
\crefname{example}{Example}{Examples}
\Crefname{example}{Example}{Examples}
\crefname{remark}{Remark}{Remarks}
\Crefname{remark}{Remark}{Remarks}

\title{Strict ground-level gaps on long-pendant generalized star graphs for nonlinear Dirac equations}

\author{Zhipeng Yang\thanks{Corresponding author: yangzhipeng326@163.com. }\\
	{\small Yunnan Key Laboratory of Modern Analytical Mathematics and Applications, Kunming 650500, China.}\\
	{\small Department of Mathematics, Yunnan Normal University, Kunming 650500, China.}   
}
\date{}

\begin{document}
	\maketitle
	
	\begin{abstract}
		We study strict ground-level gaps for autonomous nonlinear Dirac equations on noncompact metric graphs.  The relevant level at infinity is the autonomous full-line level \(d_\lambda^\infty=d_{\lambda,\R}\), since a concentrating sequence escaping along a half-line sees the real line after recentering far from all vertices.  We prove that, uniformly for \(\lambda\) in a compact subinterval \(I\Subset(-mc^2,mc^2)\), the presence of a sufficiently long pendant path in the essential reduction of the graph implies
		\[
		d_\lambda(\G)<d_\lambda^\infty .
		\]
		The proof uses a half-line restriction of a symmetric full-line ground state, uniform exponential decay, resolvent localization for massive Dirac-Kirchhoff spectral projections, and a compactness argument for generalized Nehari fibres.  We also identify elementary graph classes for which no such gap can hold: pure transmission graphs whose essential reduction is isometric to the full line satisfy \(d_\lambda(\G)=d_\lambda^\infty\).
	\end{abstract}
	
	\textbf{Keywords.} Nonlinear Dirac equation; generalized star graph; long pendant.
	
	\textbf{2020 Mathematics Subject Classification.} 35Q41, 35R02, 81Q35, 47J30, 34B45.

	
	\section{Introduction and main results}
	
	Metric graphs provide a framework for one-dimensional quantum systems with a prescribed network structure. Differential equations are imposed on the edges, while vertex conditions couple the edge dynamics. General accounts of quantum graph operators and evolution equations on networks can be found in \cite{BerkolaikoKuchment2013,ExnerKeatingKuchmentSunadaTeplyaev2008,Mugnolo2014}. For nonlinear Schr\"odinger equations, the existence of ground states on noncompact graphs depends on both topology and metric lengths \cite{Adami2016,AdamiSerraTilli2015,AdamiSerraTilli2016}. A comparison with the ground level on the real line is central to this dependence, since minimizing sequences may escape along an external edge.
	
	For the focusing power NLS equation, a standard problem is to minimize the energy at prescribed mass,
	\[
	E_q(v)=\frac12\int_\G |v'|^2\,\dd x
	-\frac1q\int_\G |v|^q\,\dd x,
	\qquad v\in H^1(\G),\quad \norm{v}_{L^2(\G)}^2=\mu,
	\qquad 2<q<6.
	\]
	Beyond global minimization, localization constraints yield multiple positive bound states associated with prescribed bounded edges when the mass is sufficiently large \cite{AST2019}. Uniqueness and compactness at prescribed mass have also been studied for graphs consisting of half-lines and a pendant \cite{DST2020}. In a different regime, terminal edges support positive low-action solutions concentrating near terminal vertices at large frequency on compact graphs \cite{DGMP2020}. For cubic NLS, the construction and comparison of states concentrated on individual bounded edges reveal the role of edge type and length \cite{BMP2021}. These results explain why a terminal branch is a natural place to seek a localized profile.
	
	These comparisons depend on the choice of variational level. Prescribed-mass energy minimization and fixed-frequency action minimization on the Nehari manifold are related but distinct problems \cite{DST2023}. On metric graphs, minimization on the Nehari manifold and minimization among nonzero solutions can even have different levels and attainment properties \cite{DDGS2023}. Recent classifications on the \(\mathcal T\)-graph, formed by two half-lines and one pendant \cite{ACT2024}, and on single-knot graphs with half-lines, loops and pendants \cite{ACT2025}, further describe how positive solutions and least-action states depend on the metric parameters. These are scalar counterparts of the geometric question considered here. For the Dirac equation, the positive and negative spectral subspaces both have infinite dimension, and the action is strongly indefinite.
	
	The linear theory of Dirac operators on graphs includes the study of self-adjoint vertex conditions by Bulla and Trenkler \cite{BullaTrenkler1990} and spectral analysis by Bolte and Harrison \cite{BolteHarrison2003}. For the massive two-component realization used here, Borrelli, Carlone and Tentarelli \cite{BCTKirchoff2021} discuss the spectral properties and quadratic-form domain. Complementary boundary and index theory for first-order graph operators, including scalar Dirac realizations, was developed by Richtsfeld \cite{Richtsfeld2024}.
	
	Nonlinear Dirac equations on networks have been studied with several interactions and vertex conditions. Sabirov et al.\ \cite{SabirovEtAl2018} constructed soliton profiles and studied transmission for a Gross--Neveu interaction with weighted junction conditions. For the Dirac--Kirchhoff operator and power nonlinearities, Borrelli, Carlone and Tentarelli \cite{BorrelliCarloneTentarelli2019} obtained infinitely many bound states when the nonlinearity is confined to the compact core, together with a nonrelativistic limit for the constructed states in the range \(2<p<6\). Their subsequent work \cite{BorrelliCarloneTentarelli2021} established local dynamics on noncompact graphs and standing-wave bifurcation on infinite stars. More recently, He and Ji \cite{HeJi2026} studied prescribed-charge solutions with localized power nonlinearities under coupling or topological assumptions, while Xing and Yang \cite{XingYang2026} established local well-posedness on stars for bounded initial data of low Sobolev regularity and powers \(p\ge3\). They also proved local well-posedness at low Sobolev regularity for a quadratic interaction between the positive and negative spectral components on \(N\)-star graphs \cite{XingYangQuadratic2026}. The question addressed here concerns a fixed frequency and a nonlinearity acting throughout the graph: how does a long terminal branch affect the generalized Nehari ground level relative to the full-line level?
	
	Let \(\G\) be a connected noncompact metric graph with finitely many half-lines and compact core.  On each oriented edge we consider the one-dimensional Dirac expression
	\[
	\D=-ic\sigma_1\frac{d}{dx}+mc^2\sigma_3,
	\qquad m>0,\quad c>0,
	\]
	endowed with Dirac-Kirchhoff vertex conditions.  For
	\[
	\lambda\in(-mc^2,mc^2),
	\]
	we study the autonomous nonlinear Dirac equation
	\begin{equation*}
		\D_\G u+\lambda u=f(\abs{u})u .
	\end{equation*}
	The positive and negative spectral subspaces of the massive Dirac operator are both infinite dimensional, so the variational problem is strongly indefinite.  We use the positive-negative spectral splitting of \(\D_\G\) and minimize the associated action functional on a generalized Nehari manifold.  The corresponding ground level is denoted by \(d_\lambda(\G)\).
	
	Throughout the paper we assume that
	\[
	F(t)=\int_0^t f(s)s\,\dd s
	\qquad(t\ge0),
	\]
	and that the nonlinearity \(f\in C^1([0,\infty))\) satisfies the following hypotheses:
	\begin{enumerate}[label=\textnormal{(F\arabic*)},leftmargin=2.2em]
		\item \(f(0)=0\), and \(f(t)\ge0\) for \(t\ge0\);
		\item there exist \(p>2\) and \(C>0\) such that
		\[
		f(t)\le C(1+t^{p-2})
		\qquad(t\ge0);
		\]
		\item there exists \(\theta>2\) such that
		\[
		0<\theta F(t)\le f(t)t^2
		\qquad(t>0);
		\]
		\item the map \(t\mapsto f(t)\) is nondecreasing on \((0,\infty)\).
	\end{enumerate}
	
	The compactness obstruction for minimizing sequences on a noncompact graph is escape along an external edge.  If a profile travels far out on a half-line, then after recentering it sees the full line, not a half-line with an endpoint condition.  Thus the relevant limiting level at infinity is the full-line autonomous ground level
	\[
	d_\lambda^\infty=d_{\lambda,\R}.
	\]
	The strict comparison
	\begin{equation}\label{eq1.1}
		d_\lambda(\G)<d_\lambda^\infty
	\end{equation}
	is the energy separation which prevents this escape mechanism.  In applications to nonautonomous problems, such a strict autonomous comparison is precisely the kind of gap used to recover compactness of Palais-Smale or minimizing sequences.
	
	The purpose of this paper is to give a concrete geometric condition implying \eqref{eq1.1}.  The condition is the existence of a sufficiently long pendant path in the essential reduction of the graph.  The essential reduction is obtained by suppressing purely transmissive degree-two vertices, and a pendant path is a compact terminal path attached to the rest of the graph through a single endpoint.  The precise definitions are given in \cref{Sec2}.
	
	The terminal endpoint of a pendant path carries the degree-one Dirac-Kirchhoff condition \(u_2=0\).  This is compatible with the odd lower component of a suitably centered full-line ground state.  Hence one may place one half of a symmetric full-line ground state on a long pendant, truncate it near the attachment point, and use it as a below-threshold test profile.  The nonlocal point is that the test profile must be compared after spectral projection onto the positive subspace of the graph Dirac operator.  Since spectral projections are nonlocal, it is not enough to compare the differential expression locally on the pendant.  A main part of the proof is therefore devoted to showing that, as the pendant length tends to infinity, the localized graph spectral projections and the corresponding generalized Nehari fibre maxima converge to those of the endpoint half-line model.
	
	Variational approaches to nonlinear Dirac equations in Euclidean space were developed by Esteban and S\'er\'e \cite{EstebanSere1995} and Bartsch and Ding \cite{BartschDing2006}; broader accounts are given in \cite{EstebanSere2002,EstebanLewinSere2008}. The positive--negative spectral splitting and generalized Nehari reduction belong to this strongly indefinite framework \cite{SzulkinWeth2010}. Concentration of semiclassical states for scalar radial interactions is treated by Ding and Xu in three dimensions \cite{DingXu2015} and by Ding, Guo and Xu in dimensions \(n\ge2\) \cite{DingGuoXu2021}. For the present one-dimensional two-component problem, \cref{AppA} verifies the autonomous variational construction directly on \(H^{1/2}(\R,\C^2)\) under (F1)--(F4).
	
	In the Euclidean autonomous problem, translation of profiles causes the loss of compactness. On a noncompact graph, a profile escaping along an external edge has the full-line problem as its recentered limit. The additional step in the present comparison is to localize the nonlocal spectral projections and the generalized Nehari fibres near a terminal branch. This connects the endpoint half-line profile to the graph functional, with estimates independent of the remaining compact core, and yields a uniform strict comparison with the full-line level.
	
	Our main result is the following uniform strict-gap theorem.
	
	\begin{theorem}\label{Thm1.1}
		Let \(I\) be a compact interval with \(I\Subset(-mc^2,mc^2)\), and assume that \(f\) satisfies \textnormal{(F1)--(F4)}.  Then there exists \(L_I>0\) such that, if the essential reduction of a generalized star graph \(\G\) contains a pendant path of length \(L\ge L_I\), then
		\begin{equation*}
			d_\lambda(\G)<d_\lambda^\infty
			\qquad\text{for every }\lambda\in I .
		\end{equation*}
		Moreover, the inequality is uniform on \(I\): there exists \(\rho_I>0\) such that
		\begin{equation}\label{eq1.2}
			d_\lambda(\G)\le d_\lambda^\infty-\rho_I
			\qquad\text{for every }\lambda\in I .
		\end{equation}
	\end{theorem}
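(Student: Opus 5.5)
The strategy is to build an explicit test function on the pendant and evaluate the graph action on its generalized Nehari fibre. The main point is to compare the graph level with the level of an \emph{endpoint half-line model} and not with the full line directly. Let \(\Pp\subset\G\) be the pendant path of length \(L\), with terminal vertex \(v_0\), where the condition \(u_2=0\) holds, and attachment vertex \(v_1\). Let \(H=[0,\infty)\) carry the degree-one Dirac--Kirchhoff condition \(u_2(0)=0\), and let \(d_{\lambda,H}\) be the generalized Nehari level of the corresponding half-line problem.

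\textbf{Step 1: the half-line model sits strictly below \(d_\lambda^\infty\).} By the autonomous theory of \cref{AppA}, for \(\lambda\in I\) there is a full-line ground state \(w_\lambda\). It can be centered so that \(w_{\lambda,1}\) is even and \(w_{\lambda,2}\) is odd, for instance by taking a minimizer in the symmetric class and checking it is a ground state. Uniformly in \(\lambda\in I\) we have \(|w_\lambda(x)|\le Ce^{-\kappa|x|}\). The restriction \(\tilde w_\lambda=w_\lambda|_{[0,\infty)}\) then satisfies \(\tilde w_{\lambda,2}(0)=0\) and solves the equation on \(H\). By symmetry, its action is \(\tfrac12 d_\lambda^\infty\), and \(\tilde w_\lambda\) lies on the Nehari set of \(H\). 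Indeed, after reflection, the half-line operator is the full-line operator restricted to the symmetric sector, so its spectral projections are the restrictions of the full-line ones. Hence \(d_{\lambda,H}\le\tfrac12 d_\lambda^\infty\). Since \(d_\lambda^\infty\ge c_I>0\) uniformly on \(I\) (from (F2)--(F3) and the spectral gap \(mc^2-|\lambda|\ge\delta_I\)), we obtain a gap of at least \(\tfrac12 c_I\).

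\textbf{Step 2: transplant onto the pendant.} Identify \(\Pp\) with \([0,L]\), with \(0\leftrightarrow v_0\). Let \(\chi_L\) be a cutoff equal to \(1\) on \([0,L/2]\), vanishing near \(L\), with \(|\chi_L'|\lesssim 1/L\). Set \(\phi_L=\chi_L\tilde w_\lambda\), extended by zero to \(\G\). Then \(\phi_L\) lies in the form domain, since the vertex condition at \(v_0\) holds and \(\phi_L\) vanishes near \(v_1\). Exponential decay gives \(\|\phi_L-\tilde w_\lambda\|_{H^1}\le Ce^{-\kappa L/2}\) uniformly on \(I\).

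\textbf{Step 3 (main obstacle): localization of spectral projections and of the fibre maximum.} We need
\[
\sup_{u\in\widehat E(\phi_L)}\Phi_{\lambda,\G}(u)\longrightarrow \sup_{u\in\widehat E_H(\tilde w_\lambda)}\Phi_{\lambda,H}(u)=\tfrac12 d_\lambda^\infty,
\]
uniformly in \(\lambda\in I\) and independently of the core. Here \(\widehat E(\phi)=\{t\phi^+ + \psi:\ t\ge0,\ \psi\in E^-\}\).

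First, write \(P^\pm_\G=\tfrac12\pm\tfrac12\,\mathrm{sgn}(\D_\G)\) and represent \(\mathrm{sgn}\) through a resolvent integral along \(i\R\). For functions supported in \([0,L/2]\), a Combes--Thomas estimate gives an exponential bound of order \(e^{-\kappa' L}\) on the difference \(\chi(R_\G(z)-J R_H(z)J^*)\chi\), integrable in \(z\). The constant depends only on \(mc^2\), because the mass gap is independent of the core. This yields \(\|P^\pm_\G\phi_L-JP^\pm_H\tilde w_\lambda\|_{H^{1/2}}\to0\).

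Second, fibre maximizers \((t_L,\psi_L)\) are bounded. This follows from (F3) and from the lower bound \(\Phi\le t^2\|\phi^+\|^2-\|\psi\|^2-\int F\), the standard Szulkin--Weth coercivity. Tightness of \(\psi_L\) on the pendant region comes from a localization argument: a negative component far from the support cannot raise \(\Phi\), because \(F\ge0\) and the quadratic term is negative. Then a weak-limit, concavity-in-\(\psi\) argument, using (F4) for uniqueness of the fibre maximum, passes the maximum to the half-line model. If time allows, I would replace this compactness argument by a direct upper bound, using the fact that \(\psi\mapsto\Phi(t\phi^++\psi)\) is strictly concave with a modulus independent of \(L\).

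\textbf{Step 4: conclusion.} By Step 3, for \(L\ge L_I\) and all \(\lambda\in I\),
\[
d_\lambda(\G)\le\sup_{\widehat E(\phi_L)}\Phi\le\tfrac12 d_\lambda^\infty+\tfrac14 c_I\le d_\lambda^\infty-\tfrac14 c_I.
\]
This gives \eqref{eq1.2} with \(\rho_I=c_I/4\). Uniformity in \(\lambda\) uses compactness of \(I\) together with continuity of \(\lambda\mapsto w_\lambda\) in \(H^1\), or more simply the uniformity of the decay and mass-gap constants above.
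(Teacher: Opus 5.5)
Your plan is essentially the paper's proof. You cut off half of the even--odd full-line ground state on the long pendant, compare graph and endpoint half-line spectral projections through a graph-uniform Combes--Thomas bound inside the sign-resolvent integral, pass the bounded, localized fibre maximizers to the half-line fibre at level $\frac12 d_\lambda^\infty$, and obtain $\rho_I=\frac14\inf_{\lambda\in I}d_\lambda^\infty$. Your symmetric-sector proof that $\tilde w_\lambda\in\N_{\lambda,\R_+}$ and your mountain-pass lower bound for $d_\lambda^\infty$ are harmless variants of \cref{Lem3.2} and \cref{LemA.9}.

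The one step that needs more than you state is tightness. A spatial cutoff of a vector in $\Y_\G^-$ is not in $\Y_\G^-$, so ``the quadratic term is negative'' is not justified as written. The paper makes this step rigorous with the exact splitting $Q_{\lambda,\G}(z)=Q_{\lambda,\G}(\chi_Rz)+Q_{\lambda,\G}(\psi_Rz)$ and the graph-uniform commutator bound $\norm{[P_\G^\pm,\psi_R]}\le C/R$. Together these give $J_{\lambda,\G}(z)\le J_{\lambda,\G}(\chi_Rz)+o_R(1)$, and no uniqueness of fibre maxima is needed.
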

	
	The result says that a sufficiently long terminal branch produces a genuine sub-full-line test level.  The constants \(L_I\) and \(\rho_I\) depend only on \(I,m,c,f\) and are independent of the shortest bounded-edge length, the vertex degrees, the number of edges, and the topology of the compact core away from the distinguished pendant path.
	
	The next result shows that a compact presentation of a graph does not by itself create a strict gap.  If the essential reduction is the real line, then the graph is analytically equivalent to the full-line model.
	
	\begin{theorem}\label{Thm1.2}
		Let \(\G\) be a generalized star graph whose essential reduction is isometric to \(\R\).  Then
		\[
		d_\lambda(\G)=d_\lambda^\infty
		\qquad\text{for every }\lambda\in(-mc^2,mc^2).
		\]
	\end{theorem}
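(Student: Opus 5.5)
The statement to prove is \cref{Thm1.2}: if the essential reduction of \(\G\) is isometric to \(\R\), then \(d_\lambda(\G)=d_\lambda^\infty\). The plan is to show that the whole variational structure is invariant under suppressing purely transmissive degree-two vertices, and then to identify the reduced graph with the real line.

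First I will check that a degree-two Dirac--Kirchhoff vertex is transparent. At such a vertex \(v\) joining edges \(e_1,e_2\), oriented so that \(e_1\) is incoming and \(e_2\) outgoing, the conditions read \(u^1_{e_1}(v)=u^1_{e_2}(v)\) (continuity of the first component) and \(u^2_{e_1}(v)=u^2_{e_2}(v)\) (the flux condition for the second component, after the orientation signs are accounted for). These are exactly the continuity conditions for a \(C^2\)-valued function on the concatenated interval \(e_1\cup e_2\). Hence the gluing map
\[
U:L^2(\G,\C^2)\to L^2(\G_{\mathrm{red}},\C^2),
\]
obtained by concatenating edges along an arclength parametrization of the essential reduction \(\G_{\mathrm{red}}\), is unitary. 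It maps \(\dom\D_\G\) onto \(\dom\D_{\G_{\mathrm{red}}}\) and satisfies \(U\D_\G U^{-1}=\D_{\G_{\mathrm{red}}}\). If the essential reduction is built by iterated suppression, I will argue by induction on the number of suppressed vertices, or do all suppressions at once with a global parametrization.

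Next, when \(\G_{\mathrm{red}}\) is isometric to \(\R\), composing \(U\) with that isometry gives a unitary \(W:L^2(\G,\C^2)\to L^2(\R,\C^2)\) with \(W\D_\G W^{-1}=\D_\R\). The one point to handle here is orientation. If the isometry reverses the direction of some edge, the expression \(-ic\sigma_1\,d/dx\) changes sign on that edge. I will fix this with the convention already built into the graph formalism: a reversed edge carries the reversal map \(x\mapsto \ell-x\) combined with the sign change \(u_2\mapsto -u_2\). This map is unitary, commutes with \(\sigma_3\), anticommutes appropriately with \(\sigma_1\,d/dx\), and preserves \(\abs{u}\).

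Everything then transfers through \(W\). By the spectral theorem \(W\) intertwines the spectral projections \(P^\pm\), so it maps \(E^\pm(\G)\) onto \(E^\pm(\R)\). It also maps the form domain \(\dom\abs{\D_\G}^{1/2}\) onto \(H^{1/2}(\R,\C^2)\) isometrically in the graph norm. Since \(W\) acts pointwise on \(\abs{u}\) and is measure preserving, \(\int_\G F(\abs{u})=\int_\R F(\abs{Wu})\). Consequently \(\Phi_{\lambda,\R}\circ W=\Phi_{\lambda,\G}\), and \(W\) maps the generalized Nehari manifold \(\N_\lambda(\G)\), defined through \(E^+\oplus E^-\), onto \(\N_\lambda(\R)\). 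Taking infima gives \(d_\lambda(\G)=d_{\lambda,\R}=d_\lambda^\infty\) for every \(\lambda\in(-mc^2,mc^2)\); the autonomous construction of \cref{AppA} guarantees that the right-hand side is well defined.

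I expect the main obstacle to be bookkeeping rather than analysis. One must confirm that the paper's precise definition of ``purely transmissive'' (from \cref{Sec2}) matches the sign and orientation conventions above, so that the degree-two vertex conditions are exactly continuity of both components after reorientation. One must also check that \(\lambda\) enters only through \(\D+\lambda\), which \(W\) preserves.
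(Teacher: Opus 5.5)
Your proposal is correct and follows essentially the same route as the paper. The paper also builds a unitary concatenation map, using the reversal $u\mapsto S\,u(\ell-\cdot)$ with $S=\mathrm{diag}(1,-1)$ on reoriented edges, which turns the degree-two Dirac--Kirchhoff conditions into continuity of both components and intertwines $\D_\G$ with $\D_\R$; functional calculus then transfers the spectral splitting, the action and the generalized Nehari manifold, so the two infima coincide.
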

	
	We also gived the endpoint analogue.  It clarifies why finite tails attached to a single half-line should not be confused with the full-line escape threshold.
	
	For the endpoint model, let
	\[
	\D_+u=-ic\sigma_1u'+mc^2\sigma_3u,
	\qquad
	\dom(\D_+)
	=
	\left\{
	u=(u_1,u_2)^T\in H^1(\R_+,\C^2):u_2(0)=0
	\right\}.
	\]
	Let
	\[
	\Y_{\R_+}
	=
	\dom(|\D_+|^{1/2})
	=
	\Y_{\R_+}^+\oplus\Y_{\R_+}^-,
	\qquad
	\norm{u}_+^2
	:=
	\norm{|\D_+|^{1/2}u}_{L^2(\R_+)}^2 .
	\]
	For \(u=u^++u^-\in\Y_{\R_+}\), define
	\[
	\begin{split}
		J_{\lambda,\R_+}(u)=
		\frac12\left(\norm{u^+}_+^2-\norm{u^-}_+^2\right)
		+\frac{\lambda}{2}\norm{u}_{L^2(\R_+)}^2-
		\int_0^\infty F(\abs{u})\,\dd x,
	\end{split}
	\]
	and
	\[
	\N_{\lambda,\R_+}
	=
	\left\{
	u\in\Y_{\R_+}\setminus\Y_{\R_+}^-:
	J'_{\lambda,\R_+}(u)[u]=0,\quad
	J'_{\lambda,\R_+}(u)[\eta]=0
	\ \text{for every }\eta\in\Y_{\R_+}^-
	\right\}.
	\]
	The half-line ground level is
	\[
	d_\lambda^+
	:=
	\inf_{u\in\N_{\lambda,\R_+}}
	J_{\lambda,\R_+}(u).
	\]
	
	\begin{proposition}\label{Prop1.3}
		Let \(\G\) be a generalized star graph whose essential reduction is isometric to \(\R_+\).  Then
		\[
		d_\lambda(\G)=d_\lambda^+
		\qquad\text{for every }\lambda\in(-mc^2,mc^2).
		\]
	\end{proposition}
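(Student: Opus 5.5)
The plan is to show that the Dirac--Kirchhoff problem on \(\G\) is unitarily equivalent to the endpoint model on \(\R_+\), in a way that carries every ingredient of the variational problem along. Then the equality \(d_\lambda(\G)=d_\lambda^+\) follows directly from the definitions. This is the same mechanism we expect behind \cref{Thm1.2}, with the degree-one condition \(u_2=0\) kept at the single terminal vertex.

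\textbf{Step 1: the graph has the same vertex conditions as \(\R_+\).} If the essential reduction is isometric to \(\R_+\), then \(\G\) consists of a chain of edges. These are bounded edges \(e_1,\dots,e_k\) followed by one half-line, joined at degree-two vertices. In addition, there is one degree-one vertex at the start of the chain. We orient the edges consistently and concatenate the coordinates. This gives an isometry \(\Phi:\G\to[0,\infty)\) and a unitary map
\[
U:L^2(\G,\C^2)\to L^2(\R_+,\C^2),\qquad (Uu)(x)=u_{e}(x-x_e)\quad\text{for } x\text{ in the image of } e,
\]
where \(x_e\) denotes the left endpoint of the image of \(e\).

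At a degree-two vertex, the Dirac--Kirchhoff conditions reduce to continuity of \(u_1\) and to \(u_2^{\mathrm{in}}=u_2^{\mathrm{out}}\), the Kirchhoff sum with the orientation signs. This is exactly the matching condition that makes the concatenated function belong to \(H^1\) near the junction point. If some edge is oriented opposite to the chain, we first compose with the reflection \(x\mapsto \ell_e-x\) combined with \(u\mapsto\sigma_3u\). This reflection intertwines \(-ic\sigma_1\frac{d}{dx}+mc^2\sigma_3\) with itself, so we may assume consistent orientation. At the degree-one vertex the condition is \(u_2=0\).

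It follows that \(U\dom(\D_\G)=\dom(\D_+)\) and \(U\D_\G U^{-1}=\D_+\). The main care needed in the whole proof lies here: checking the sign conventions of \cref{Sec2} for the Kirchhoff condition at transmissive vertices and at the terminal vertex. This is the only place where the specific vertex conditions enter.

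\textbf{Step 2: transport of the spectral calculus.} Since \(U\) is unitary and intertwines the self-adjoint operators, it intertwines their functional calculus. In particular, \(U|\D_\G|^{1/2}U^{-1}=|\D_+|^{1/2}\) and \(UP^\pm_\G U^{-1}=P^\pm_{+}\), where \(P^\pm\) denote the spectral projections. Hence \(U\) maps the form domain \(\Y_\G\) isometrically onto \(\Y_{\R_+}\), and maps \(\Y_\G^\pm\) onto \(\Y_{\R_+}^\pm\).

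\textbf{Step 3: transport of the functional and the Nehari set.} Because \(U\) acts pointwise, \(\abs{Uu}\) is a rearrangement of \(\abs{u}\) by the isometry \(\Phi\). Therefore
\[
\int_\G F(\abs u)\,\dd x=\int_0^\infty F(\abs{Uu})\,\dd x,\qquad \norm{u}_{L^2(\G)}=\norm{Uu}_{L^2(\R_+)} .
\]
Together with Step 2, this gives \(J_{\lambda,\G}=J_{\lambda,\R_+}\circ U\). Differentiating, we get \(J'_{\lambda,\G}(u)[\eta]=J'_{\lambda,\R_+}(Uu)[U\eta]\). Since \(U\) is a bijection \(\Y_\G^-\to\Y_{\R_+}^-\), it maps \(\N_{\lambda,\G}\) bijectively onto \(\N_{\lambda,\R_+}\).

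Taking infima over these corresponding sets yields \(d_\lambda(\G)=d_\lambda^+\) for every \(\lambda\in(-mc^2,mc^2)\). No attainment or compactness is needed for this equality.
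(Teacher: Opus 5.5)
Your proposal is correct and is essentially the paper's proof. You concatenate the chain into a coordinate on $[0,\infty)$, using the orientation reversal $u\mapsto\sigma_3u(\ell-\cdot)$ where needed. You check that the resulting unitary map sends $\dom(\D_\G)$ onto $\dom(\D_+)$ and intertwines the operators: continuity holds at the suppressed degree-two vertices, and $u_2(0)=0$ holds at the terminal vertex. You then transport the spectral splitting, the action and the generalized Nehari set by functional calculus and take infima.
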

	
	\begin{remark}
		The level \(d_\lambda^+\) is not the escape level of a Palais-Smale sequence going to infinity along an external edge.  It is the endpoint model level.  The escape level is \(d_\lambda^\infty=d_{\lambda,\R}\).
	\end{remark}
	
	The paper is organized as follows.  In \cref{Sec2} we introduce generalized star graphs, essential reductions and pendant paths, list representative examples, and develop the variational framework, including the Dirac-Kirchhoff operator, the spectral splitting and the generalized Nehari reduction.  The long-pendant construction and the localized fibre convergence are proved in \cref{Sec3}.  The proofs of the main results are completed in the final section.  The one-dimensional full-line ground-state result is justified in the appendix.

	\section{Graphs, examples and variational setting}\label{Sec2}
	
	We collect in this section the graph-theoretic notation and the variational framework used throughout the paper.  Metric graphs and quantum graph operators are standard objects; see \cite{BerkolaikoKuchment2013,ExnerKeatingKuchmentSunadaTeplyaev2008,Mugnolo2014} for general background.  In the present paper the relevant graphs are noncompact metric graphs with finitely many external edges and compact core.  The vertex conditions are Dirac-Kirchhoff conditions.  Closely related Dirac-Kirchhoff realizations on metric graphs are considered in \cite{BorrelliCarloneTentarelli2019,BorrelliCarloneTentarelli2021}.
	
	\subsection{Generalized star graphs and essential reductions}
	
	\begin{definition}\label{Def2.1}
		A connected noncompact metric graph \(\G\) is called a generalized star graph if it has finitely many bounded edges, its compact core \(\K\), defined as their union, is nonempty, and there are finitely many half-lines \(e_1,\ldots,e_M\) with pairwise disjoint interiors such that
		\[
		\G=\K\cup\bigcup_{j=1}^M e_j,
		\qquad
		\K\cap e_j=\{v_j\}.
		\]
	\end{definition}
	
	Thus a generalized star graph is a finite compact graph with finitely many half-lines attached.  The compact core may contain loops, multiple edges, finite trees, and terminal bounded edges.  Similar graph classes appear in the variational theory of nonlinear Schr\"odinger equations on noncompact metric graphs; see \cite{Adami2016,AdamiSerraTilli2015,AdamiSerraTilli2016}.
	
	A degree-two vertex at which the Dirac-Kirchhoff condition is purely transmissive is artificial from the metric point of view: it only subdivides an edge.  Suppressing such vertices and concatenating the adjacent edges gives the essential reduction.  Whenever a chain is concatenated, orientations are chosen consistently along the chain.  If an original edge orientation is reversed during this identification, we use the unitary change of variables
	\begin{equation}\label{eq2.1}
		(u^1(x),u^2(x))\longmapsto (u^1(\ell-x),-u^2(\ell-x)).
	\end{equation}
	This transformation preserves \(\abs{u}\) and intertwines the Dirac expression and the Dirac-Kirchhoff vertex condition with their forms in the reoriented coordinate.
	
	\begin{definition}\label{Def2.2}
		The essential reduction \(\G_{\ess}\) of a generalized star graph \(\G\) is the metric graph obtained by suppressing every transmissive vertex of degree two and replacing each resulting chain of edges by one edge of the same total length, with the orientation convention described above.
	\end{definition}
	
	The essential reduction changes only the combinatorial presentation of a purely transmissive chain.  It preserves the total metric length and the Dirac-Kirchhoff dynamics along the concatenated coordinate.
	
	\begin{figure}[t]
		\centering
		\begin{tikzpicture}[scale=0.95,
			mainlabel/.style={font=\small,align=center},
			smalllabel/.style={font=\scriptsize,align=center}
			]
			\draw[edge] (-5.2,0)--(-4.0,0);
			\draw[edge] (-4.0,0)--(-2.7,0);
			\draw[edge] (-2.7,0)--(-1.4,0);
			\node[openvertex] at (-5.2,0) {};
			\node[vertex] at (-4.0,0) {};
			\node[vertex] at (-2.7,0) {};
			\node[vertex] at (-1.4,0) {};
			\node[mainlabel] at (-3.3,0.48) {transmissive\\degree-two chain};
			\node[smalllabel] at (-3.3,-0.52) {lengths \(\ell_1,\ell_2,\ell_3\)};
			
			\draw[-{Latex[length=2mm]},line width=0.7pt] (-0.75,0)--(0.55,0);
			\node[smalllabel] at (-0.10,0.38) {suppress};
			
			\draw[edge] (1.15,0)--(5.2,0);
			\node[openvertex] at (1.15,0) {};
			\node[vertex] at (5.2,0) {};
			\node[mainlabel] at (3.2,0.48) {one edge of length\\\(\ell_1+\ell_2+\ell_3\)};
			\node[smalllabel] at (3.2,-0.52) {\(\G_{\ess}\)};
		\end{tikzpicture}
		\caption{Suppression of purely transmissive degree-two vertices.  The metric length is preserved, and the orientation convention \eqref{eq2.1} keeps the Dirac expression in the standard form.}
		\label{Fig1}
	\end{figure}
	
	\begin{definition}\label{Def2.3}
		A compact path \(\Pp\subset\G_{\ess}\) is called a pendant path if \(\Pp\) is isometric to \([0,L]\), one endpoint of \(\Pp\) is a terminal vertex, and
		\[
		\Pp\cap\overline{\G_{\ess}\setminus\Pp}
		\]
		consists exactly of the other endpoint.  The number \(L\) is called the pendant length.
	\end{definition}
	
	The second endpoint is not required to be a vertex; it may lie in the interior of a bounded or external edge.
	
	Equivalently, a pendant path is a terminal compact arm whose interior does not meet any other edge of the essential reduction.  In a finite tree attached to the rest of the graph, a leaf-to-branch arm may be a pendant path, but a longer route with a side branch attached at an interior point is not a pendant path.
	
	\begin{definition}\label{Def2.4}
		Let \(L_*>0\).  A generalized star graph \(\G\) is called \(L_*\)-long-pendant if its essential reduction contains a pendant path of length \(L\ge L_*\). Once a compact interval \(I\Subset(-mc^2,mc^2)\) has been fixed, the term \(I\)-long-pendant means \(L_I\)-long-pendant, where \(L_I\) is the threshold in \cref{Thm1.1}.
	\end{definition}
	
	By undoing the suppression of transmissive degree-two vertices, a pendant path in \(\G_{\ess}\) is represented in \(\G\) by a terminal metric interval of the same length.  This interval is a finite concatenation of edge segments separated only by transmissive degree-two vertices; its second endpoint may be a vertex or an interior point of a bounded or external edge.  We identify it with \([0,L]\), using \eqref{eq2.1} whenever an edge has to be reoriented.  A spinor which is an \(H^1\)-function of the concatenated coordinate satisfies the matching condition at every suppressed vertex: the first component is continuous and the signed lower-component flux is continuous across the vertex.
	
	\begin{figure}[t]
		\centering
		\begin{tikzpicture}[scale=0.95,
			panel/.style={font=\small\bfseries},
			mainlabel/.style={font=\small,align=center},
			smalllabel/.style={font=\scriptsize,align=center}
			]
			\node[panel] at (-5.15,0.55) {(a)};
			\draw[halfray] (-5.4,0)--(-3.7,0);
			\draw[halfray] (-3.4,0)--(-1.8,0);
			\node[vertex] at (-3.55,0) {};
			\draw[pendant] (-3.55,0)--(-3.55,-2.05);
			\node[openvertex] at (-3.55,-2.05) {};
			\node[mainlabel] at (-3.55,-2.50) {pendant path};
			\node[smalllabel] at (-2.98,-1.05) {\(L\)};
			
			\node[panel] at (-0.55,0.55) {(b)};
			\draw[halfray] (-0.9,0)--(0.5,0);
			\draw[halfray] (0.8,0)--(2.2,0);
			\node[vertex] at (0.65,0) {};
			\draw[edge] (0.65,0)--(0.65,-0.85);
			\node[vertex] at (0.65,-0.85) {};
			\draw[pendant] (0.65,-0.85)--(0.0,-1.65)--(0.0,-2.30);
			\node[openvertex] at (0.0,-2.30) {};
			\draw[edge] (0.65,-0.85)--(1.25,-1.55);
			\node[openvertex] at (1.25,-1.55) {};
			\node[smalllabel] at (1.45,-0.82) {branch\\point};
			\node[mainlabel] at (0.50,-2.78) {two terminal\\arms};
			
			\node[panel] at (3.35,0.55) {(c)};
			\draw[halfray] (3.0,0)--(4.3,0);
			\draw[halfray] (4.6,0)--(5.9,0);
			\node[vertex] at (4.45,0) {};
			\draw[edge] (4.45,0)--(4.45,-0.85);
			\node[vertex] at (4.45,-0.85) {};
			\draw[core] (4.85,-0.85) circle (0.4);
			\draw[pendant] (4.45,-0.85)--(4.45,-2.35);
			\node[openvertex] at (4.45,-2.35) {};
			\node[mainlabel] at (4.45,-2.82) {looped core\\with terminal arm};
		\end{tikzpicture}
		\caption{Pendant paths in the essential reduction.  Panel (a) shows a single pendant path, panel (b) shows two terminal arms meeting at a branch point, and panel (c) shows that decorations at the attachment endpoint are allowed.}
		\label{Fig2}
	\end{figure}
	
	\subsection{Examples}
	
	We now list graph configurations to which \cref{Thm1.1} applies once the displayed terminal path has length at least \(L_I\).  The relevant pendant paths are drawn in blue.
	
	\begin{example}\label{Ex2.5}
		\emph{\(T\)-graph.} Let \(\G\) be obtained by attaching two half-lines and one bounded edge of length \(L\) at a common vertex.  The bounded edge ends at a terminal vertex. This is the graph shown in panel (a) of \cref{Fig3}.  If \(L\ge L_I\), then \(\G\) is \(I\)-long-pendant and
		\[
		d_\lambda(\G)<d_\lambda^\infty
		\qquad(\lambda\in I).
		\]
	\end{example}
	
	\begin{example}\label{Ex2.6}
		\emph{Line with one long terminal branch.} Let a compact path of length \(L\) be attached to one point of the real line, and assume that the free endpoint of this path is terminal.  This is shown in panel (b) of \cref{Fig3}.  If \(L\ge L_I\), then the strict gap holds on \(I\).
	\end{example}
	
	\begin{example}\label{Ex2.7}
		\emph{Terminal arm of a finite tree.} Suppose that \(\G_{\ess}\) contains a finite tree attached to the rest of the graph through a single root vertex.  If the tree contains a terminal arm, from the root or from a branching vertex to a leaf, whose interior has no side branch and whose length is at least \(L_I\), then \(\G\) is \(I\)-long-pendant.  A representative picture is panel (c) of \cref{Fig3}.
	\end{example}
	
	\begin{example}\label{Ex2.8}
		\emph{Looped or decorated compact core with a long terminal arm.} Let a compact decorated branch, possibly containing loops or finite side edges, be attached to a noncompact trunk.  If one of the terminal arms issuing from this compact branch has length at least \(L_I\), then the strict gap follows from \cref{Thm1.1}.  This configuration is shown in panel (d) of \cref{Fig3}.
	\end{example}
	
	\begin{figure}[t]
		\centering
		\begin{tikzpicture}[scale=0.88,
			panel/.style={font=\small\bfseries},
			mainlabel/.style={font=\small,align=center},
			smalllabel/.style={font=\scriptsize,align=center}
			]
			\node[panel] at (-6.0,0.55) {(a)};
			\draw[halfray] (-6.2,0)--(-4.65,0);
			\draw[halfray] (-4.35,0)--(-2.85,0);
			\node[vertex] at (-4.5,0) {};
			\draw[pendant] (-4.5,0)--(-4.5,-1.9);
			\node[openvertex] at (-4.5,-1.9) {};
			\node[mainlabel] at (-4.5,-2.35) {\(T\)-graph};
			
			\node[panel] at (-1.95,0.55) {(b)};
			\draw[halfray] (-2.15,0)--(-0.75,0);
			\draw[halfray] (-0.45,0)--(0.95,0);
			\node[vertex] at (-0.6,0) {};
			\draw[pendant] (-0.6,0)--(-0.6,-1.9);
			\node[openvertex] at (-0.6,-1.9) {};
			\node[mainlabel] at (-0.6,-2.35) {line with\\terminal branch};
			
			\node[panel] at (1.85,0.55) {(c)};
			\draw[halfray] (1.65,0)--(3.0,0);
			\draw[halfray] (3.3,0)--(4.65,0);
			\node[vertex] at (3.15,0) {};
			\draw[edge] (3.15,0)--(3.15,-0.65);
			\node[vertex] at (3.15,-0.65) {};
			\draw[pendant] (3.15,-0.65)--(2.45,-1.35)--(2.45,-2.05);
			\node[openvertex] at (2.45,-2.05) {};
			\draw[edge] (3.15,-0.65)--(3.85,-1.35);
			\node[openvertex] at (3.85,-1.35) {};
			\node[mainlabel] at (3.15,-2.65) {terminal arm\\in a tree};
			
			\node[panel] at (5.55,0.55) {(d)};
			\draw[halfray] (5.35,0)--(6.65,0);
			\draw[halfray] (6.95,0)--(8.25,0);
			\node[vertex] at (6.8,0) {};
			\draw[edge] (6.8,0)--(6.8,-0.75);
			\node[vertex] at (6.8,-0.75) {};
			\draw[core] (7.17,-0.75) circle (0.37);
			\draw[pendant] (6.8,-0.75)--(6.8,-2.05);
			\node[openvertex] at (6.8,-2.05) {};
			\node[mainlabel] at (6.8,-2.65) {decorated\\compact core};
		\end{tikzpicture}
		\caption{Basic positive configurations.  Panels (a)--(d) correspond respectively to \cref{Ex2.5,Ex2.6,Ex2.7,Ex2.8}.}
		\label{Fig3}
	\end{figure}
	
	\begin{example}\label{Ex2.9}
		\emph{Several terminal arms.} If \(\G_{\ess}\) contains several pendant paths and at least one of them has length at least \(L_I\), then \(\G\) is \(I\)-long-pendant.  The proof uses only one such terminal arm.  This situation is shown in panel (a) of \cref{Fig4}.
	\end{example}
	
	\begin{example}\label{Ex2.10}
		\emph{Pendant chain with artificial vertices.} Suppose a terminal branch is represented in the original graph by a chain of bounded edges separated by transmissive degree-two vertices, with total length \(L\).  Its essential reduction is a single pendant path of length \(L\).  This reduction is shown in panels (b) and (c) of \cref{Fig4}.  Hence the strict gap holds on \(I\) if \(L\ge L_I\).
	\end{example}
	
	\begin{figure}[t]
		\centering
		\begin{tikzpicture}[scale=0.95,
			panel/.style={font=\small\bfseries},
			mainlabel/.style={font=\small,align=center},
			smalllabel/.style={font=\scriptsize,align=center}
			]
			\node[panel] at (-5.35,0.55) {(a)};
			\draw[halfray] (-5.6,0)--(-4.45,0);
			\draw[halfray] (-4.15,0)--(-3.0,0);
			\node[vertex] at (-4.3,0) {};
			\draw[pendant] (-4.3,0)--(-4.95,-0.7)--(-4.95,-1.55);
			\node[openvertex] at (-4.95,-1.55) {};
			\draw[pendant] (-4.3,0)--(-3.65,-0.7)--(-3.65,-1.90);
			\node[openvertex] at (-3.65,-1.90) {};
			\node[mainlabel] at (-4.30,-2.35) {several\\terminal arms};
			
			\node[panel] at (-1.55,0.55) {(b)};
			\draw[halfray] (-1.8,0)--(-0.55,0);
			\draw[halfray] (-0.25,0)--(1.00,0);
			\node[vertex] at (-0.40,0) {};
			\draw[pendant] (-0.40,0)--(-0.40,-0.55);
			\node[vertex] at (-0.40,-0.55) {};
			\draw[pendant] (-0.40,-0.55)--(-0.40,-1.10);
			\node[vertex] at (-0.40,-1.10) {};
			\draw[pendant] (-0.40,-1.10)--(-0.40,-1.95);
			\node[openvertex] at (-0.40,-1.95) {};
			\node[mainlabel] at (-0.40,-2.38) {chain before\\reduction};
			\node[smalllabel] at (0.30,-1.05) {degree-two\\vertices};
			
			\draw[-{Latex[length=2mm]},line width=0.7pt] (1.50,-0.95)--(2.65,-0.95);
			\node[smalllabel] at (2.08,-0.55) {\(\G_{\ess}\)};
			
			\node[panel] at (3.15,0.55) {(c)};
			\draw[halfray] (3.00,0)--(4.25,0);
			\draw[halfray] (4.55,0)--(5.80,0);
			\node[vertex] at (4.40,0) {};
			\draw[pendant] (4.40,0)--(4.40,-1.95);
			\node[openvertex] at (4.40,-1.95) {};
			\node[mainlabel] at (4.40,-2.38) {single\\pendant path};
		\end{tikzpicture}
		\caption{Additional positive configurations.  Panel (a) corresponds to \cref{Ex2.9}.  Panels (b)--(c) correspond to \cref{Ex2.10}: a chain with transmissive degree-two vertices becomes one pendant path in the essential reduction.}
		\label{Fig4}
	\end{figure}
	
	The next two examples illustrate invariance under insertion of transmissive degree-two vertices: full-line reduction gives the level \(d_\lambda^\infty\), whereas half-line reduction gives \(d_\lambda^+\), by \cref{Thm1.2,Prop1.3}. The final two examples illustrate paths excluded by the pendant condition; \cref{Thm1.1} does not apply unless an additional long terminal pendant path is present.
	
	\begin{example}\label{Ex2.11}
		\emph{Finite chain between two half-lines.} Let \(\G\) consist of two half-lines joined by a finite chain of bounded edges, and assume that every interior vertex is transmissive of degree two.  Then \(\G_{\ess}\simeq\R\), as shown in panel (a) of \cref{Fig5}.  Therefore
		\[
		d_\lambda(\G)=d_\lambda^\infty
		\qquad(\lambda\in(-mc^2,mc^2)).
		\]
	\end{example}
	
	\begin{example}\label{Ex2.12}
		\emph{Finite tail followed by one half-line.} Let \(\G\) be a finite path whose right endpoint is attached to a single half-line and whose left endpoint is terminal.  Then \(\G_{\ess}\simeq\R_+\), as shown in panel (b) of \cref{Fig5}, and
		\[
		d_\lambda(\G)=d_\lambda^+,
		\]
		where \(d_\lambda^+\) is the half-line endpoint level.
	\end{example}
	
	\begin{example}\label{Ex2.13}
		\emph{Long internal bridge.} A long compact edge connecting two nonterminal vertices is not a pendant path, even if its length is very large.  This is shown in panel (c) of \cref{Fig5}.  The theorem does not apply to such an internal bridge unless the graph also contains a sufficiently long terminal pendant path.
	\end{example}
	
	\begin{example}\label{Ex2.14}
		\emph{Side branch along a terminal route.} If one tries to take a path from a leaf to a distant root but the path has a side branch attached at an interior point, then that whole route is not a pendant path.  The maximal terminal subpath from the leaf to the first branching vertex is a pendant path.  This distinction is shown in panel (d) of \cref{Fig5}.
	\end{example}
	
	\begin{figure}[t]
		\centering
		\begin{tikzpicture}[scale=0.90,
			panel/.style={font=\small\bfseries},
			mainlabel/.style={font=\small,align=center},
			smalllabel/.style={font=\scriptsize,align=center}
			]
			\node[panel] at (-6.05,0.55) {(a)};
			\draw[halfray] (-6.2,0)--(-4.8,0);
			\draw[edge] (-4.8,0)--(-2.9,0);
			\draw[halfray] (-2.9,0)--(-1.5,0);
			\node[vertex] at (-4.8,0) {};
			\node[vertex] at (-3.85,0) {};
			\node[vertex] at (-2.9,0) {};
			\node[mainlabel] at (-3.85,-0.55) {\(\G_{\ess}\simeq\R\)};
			
			\node[panel] at (-0.90,0.55) {(b)};
			\draw[edge] (-0.7,0)--(0.8,0);
			\draw[halfray] (0.8,0)--(2.4,0);
			\node[openvertex] at (-0.7,0) {};
			\node[vertex] at (0.8,0) {};
			\node[mainlabel] at (0.15,-0.55) {\(\G_{\ess}\simeq\R_+\)};
			
			\node[panel] at (3.25,0.55) {(c)};
			\draw[halfray] (3.1,0)--(4.0,0);
			\draw[halfray] (5.9,0)--(6.8,0);
			\node[vertex] at (4.0,0) {};
			\node[vertex] at (5.9,0) {};
			\draw[edge] (4.0,0)--(5.9,0);
			\node[mainlabel] at (4.95,0.55) {long internal\\bridge};
			\node[smalllabel] at (4.95,-0.40) {not pendant};
			
			\node[panel] at (-5.90,-3.20) {(d)};
			\draw[halfray] (-5.8,-3.75)--(-4.6,-3.75);
			\draw[halfray] (-4.3,-3.75)--(-3.1,-3.75);
			\node[vertex] at (-4.45,-3.75) {};
			\draw[edge] (-4.45,-3.75)--(-4.45,-4.55);
			\node[vertex] at (-4.45,-4.55) {};
			\draw[edge] (-4.45,-4.55)--(-5.05,-5.15);
			\node[openvertex] at (-5.05,-5.15) {};
			\draw[pendant] (-4.45,-4.55)--(-4.45,-5.85);
			\node[openvertex] at (-4.45,-5.85) {};
			\node[smalllabel] at (-3.78,-4.70) {branch\\point};
			\node[mainlabel] at (-4.45,-6.28) {valid pendant\\subpath only};
			
			\node[panel] at (0.25,-3.20) {(e)};
			\draw[halfray] (-0.1,-3.75)--(1.1,-3.75);
			\draw[halfray] (1.4,-3.75)--(2.6,-3.75);
			\node[vertex] at (1.25,-3.75) {};
			\draw[edge] (1.25,-3.75)--(1.25,-4.55);
			\node[vertex] at (1.25,-4.55) {};
			\draw[core] (1.60,-4.55) circle (0.35);
			\draw[pendant] (1.25,-4.55)--(1.25,-5.80);
			\node[openvertex] at (1.25,-5.80) {};
			\node[mainlabel] at (1.25,-6.28) {decoration at\\attachment allowed};
		\end{tikzpicture}
		\caption{Invariant model configurations and paths excluded by the pendant condition.  Panels (a)--(b) correspond to \cref{Ex2.11,Ex2.12}; panel (c) corresponds to \cref{Ex2.13}; panel (d) corresponds to \cref{Ex2.14}. Panel (e) recalls that decorations at the attachment endpoint do not destroy the pendant property.}
		\label{Fig5}
	\end{figure}
	
	\subsection{Consequences of the assumptions on \texorpdfstring{\(f\)}{f}}
	
	Recall that
	\[
	F(t)=\int_0^t f(s)s\,\dd s
	\qquad(t\ge0),
	\]
	and that \(f\) satisfies \textnormal{(F1)--(F4)}.  We gived here some elementary consequences which will be used repeatedly.
	
	Since \(f\) is nondecreasing and nonnegative, \(F\) is nondecreasing and convex on \([0,\infty)\).  Moreover the radial map
	\[
	\xi\longmapsto F(\abs{\xi})
	\]
	is convex on \(\C^2\).  Indeed, \(F'(t)=f(t)t\) and
	\[
	F''(t)=f'(t)t+f(t)\ge0
	\qquad(t>0),
	\]
	while \(F\) is nondecreasing.
	
	Furthermore,
	\begin{equation*}
		H(t):=\frac12 f(t)t^2-F(t)\ge0
		\qquad(t\ge0).
	\end{equation*}
	This follows from
	\[
	F(t)=\int_0^t f(s)s\,\dd s
	\le f(t)\int_0^t s\,\dd s
	=
	\frac12 f(t)t^2 .
	\]
	By \textnormal{(F2)} and \(f(0)=0\), for every \(\eps>0\) there exists \(C_\eps>0\) such that
	\begin{equation}\label{eq2.2}
		f(t)t^2\le \eps t^2+C_\eps t^p,
		\qquad
		F(t)\le \eps t^2+C_\eps t^p
		\qquad(t\ge0).
	\end{equation}
	The Ambrosetti-Rabinowitz condition \textnormal{(F3)} gives
	\begin{equation}\label{eq2.3}
		H(t)
		=
		\frac12 f(t)t^2-F(t)
		\ge
		\left(\frac12-\frac1\theta\right)f(t)t^2>0
		\qquad(t>0).
	\end{equation}
	It also implies a superquadratic lower bound at infinity.  Since
	\[
	\frac{d}{dt}\bigl(F(t)t^{-\theta}\bigr)
	=
	t^{-\theta-1}\bigl(f(t)t^2-\theta F(t)\bigr)\ge0
	\qquad(t>0),
	\]
	the function \(t\mapsto F(t)t^{-\theta}\) is nondecreasing on \((0,\infty)\).  Hence, for every \(r>0\),
	\begin{equation}\label{eq2.4}
		F(t)\ge F(r)r^{-\theta}t^\theta
		\qquad(t\ge r).
	\end{equation}
	
	\subsection{Dirac-Kirchhoff operator and action functional}
	
	Let \(\E\) denote the finite set of edges of \(\G\). On an oriented edge \(e\), write
	\[
	u_e=(u_e^1,u_e^2)^T .
	\]
	The Dirac-Kirchhoff vertex condition is
	\begin{equation}\label{eq2.5}
		u_e^1(v)=u_f^1(v)\quad(e,f\succ v),
		\qquad
		\sum_{e\succ v}\nu_{v,e}u_e^2(v)=0,
	\end{equation}
	Here \(e\succ v\) ranges over the edge ends incident at \(v\), and \(\nu_{v,e}=1\) if the coordinate on that edge end is outgoing from \(v\), while \(\nu_{v,e}=-1\) otherwise. A loop at \(v\) contributes both endpoint traces, with their respective signs. At a terminal vertex this condition reduces to
	\[
	u^2(v)=0.
	\]
	
	The self-adjoint Dirac-Kirchhoff operator is
	\begin{equation*}
		\begin{split}
			\D_\G u&=-ic\sigma_1u'+mc^2\sigma_3u,\\
			\dom(\D_\G)
			&=
			\left\{
			u\in\bigoplus_{e\in\E}H^1(e,\C^2):
			u\text{ satisfies }\eqref{eq2.5}
			\right\}.
		\end{split}
	\end{equation*}
	The boundary form associated with \(-ic\sigma_1d/dx\) vanishes on \eqref{eq2.5}.  The corresponding space of boundary values is maximal isotropic for the boundary symplectic form; equivalently, the standard boundary-triple criterion for first-order systems on finite-degree metric graphs gives self-adjointness.  This is the usual Dirac-Kirchhoff realization considered, for instance, in \cite{BorrelliCarloneTentarelli2019,BorrelliCarloneTentarelli2021}.
	
	For \(u\in\dom(\D_\G)\), integration by parts gives
	\begin{equation*}
		\norm{\D_\G u}_{L^2(\G)}^2
		=
		c^2\norm{u'}_{L^2(\G)}^2
		+
		m^2c^4\norm{u}_{L^2(\G)}^2 .
	\end{equation*}
	Indeed, the boundary terms produced by the first-order part cancel exactly under \eqref{eq2.5}.  Consequently,
	\begin{equation}\label{eq2.6}
		\sigma(\D_\G)\cap(-mc^2,mc^2)=\varnothing.
	\end{equation}
	
	We set
	\[
	\Y_\G=\dom(|\D_\G|^{1/2}),
	\qquad
	\Y_\G=\Y_\G^+\oplus\Y_\G^- ,
	\]
	where \(\Y_\G^\pm\) are the positive and negative spectral subspaces of \(\D_\G\).  We use the norm
	\[
	\norm{u}_\G^2
	:=
	\bigl\langle |\D_\G|^{1/2}u,|\D_\G|^{1/2}u\bigr\rangle_{L^2(\G)} .
	\]
	By \eqref{eq2.6},
	\[
	\norm{u}_{L^2(\G)}
	\le
	(mc^2)^{-1/2}\norm{u}_\G .
	\]
	On every fixed generalized star graph, interpolation gives
	\[
	\Y_\G=[L^2(\G,\C^2),\dom(\D_\G)]_{1/2}
	\hookrightarrow\bigoplus_{e\in\E}H^{1/2}(e,\C^2),
	\]
	where the interpolation norm is equivalent to the spectral form norm; see \cite[Section 4]{BCTKirchoff2021}. Consequently,
	\[
	\Y_\G\hookrightarrow L^q(\G,\C^2)
	\qquad(2\le q<\infty)
	\]
	continuously, and restriction to a compact subgraph is compact into each such \(L^q\) space. The embedding constants here may depend on the fixed graph.
	
	For \(u=u^++u^-\), define
	\begin{equation*}
		J_{\lambda,\G}(u)
		=
		\frac12\bigl(\norm{u^+}_\G^2-\norm{u^-}_\G^2\bigr)
		+
		\frac{\lambda}{2}\int_\G\abs{u}^2\,\dd x
		-
		\int_\G F(\abs{u})\,\dd x .
	\end{equation*}
	Equivalently, the quadratic part is the closed form associated with \(\D_\G+\lambda\).  We denote twice this quadratic part by
	\begin{equation*}
		Q_{\lambda,\G}(u)
		=
		\norm{u^+}_\G^2-\norm{u^-}_\G^2
		+
		\lambda\norm{u}_{L^2(\G)}^2 .
	\end{equation*}
	
	If \(I\Subset(-mc^2,mc^2)\) is a compact interval, then there exists \(a_I>0\) such that, for every \(\lambda\in I\),
	\begin{equation}\label{eq2.7}
		\begin{split}
			\norm{u^+}_\G^2+\lambda\norm{u^+}_{L^2(\G)}^2
			&\ge a_I\norm{u^+}_\G^2,
			\qquad u^+\in\Y_\G^+,\\
			\norm{u^-}_\G^2-\lambda\norm{u^-}_{L^2(\G)}^2
			&\ge a_I\norm{u^-}_\G^2,
			\qquad u^-\in\Y_\G^- .
		\end{split}
	\end{equation}
	One may take
	\[
	a_I
	=
	1-\frac{\max_{\lambda\in I}\abs{\lambda}}{mc^2}>0,
	\]
	which is independent of the graph. The growth assumptions and the Sobolev embeddings imply that
	\[
	J_{\lambda,\G}\in C^1(\Y_\G,\R).
	\]
	Its derivative is
	\begin{equation*}
		J'_{\lambda,\G}(u)[\varphi]
		=
		\Ree\,\mathfrak d_\G(u,\varphi)
		+
		\lambda\Ree\int_\G u\cdot\overline{\varphi}\,\dd x
		-
		\Ree\int_\G f(\abs{u})u\cdot\overline{\varphi}\,\dd x,
	\end{equation*}
	where \(\mathfrak d_\G\) is the closed quadratic form of \(\D_\G\).
	
	The generalized Nehari manifold is
	\[
	\N_{\lambda,\G}
	=
	\left\{
	u\in\Y_\G\setminus\Y_\G^-:
	J'_{\lambda,\G}(u)[u]=0,\quad
	J'_{\lambda,\G}(u)[\eta]=0\ ,\quad \forall\,\eta\in\Y_\G^-
	\right\}.
	\]
	The ground level is
	\[
	d_\lambda(\G)
	=
	\inf_{u\in\N_{\lambda,\G}}J_{\lambda,\G}(u).
	\]
	
	For \(w\in\Y_\G^+\setminus\{0\}\), set
	\[
	\widehat{\Y}_\G(w)
	=
	\{tw+\eta:t\ge0,\ \eta\in\Y_\G^-\},
	\qquad
	\Gamma_{\lambda,\G}(w)
	=
	\max_{z\in\widehat{\Y}_\G(w)}
	J_{\lambda,\G}(z).
	\]
	
	\subsection{The generalized Nehari fibre formula}
	
	\begin{proposition}\label{Prop2.15}
		For every \(\lambda\in(-mc^2,mc^2)\),
		\[
		d_\lambda(\G)
		=
		\inf_{w\in\Y_\G^+\setminus\{0\}}
		\Gamma_{\lambda,\G}(w).
		\]
		The same formula holds on \(\R\) and on \(\R_+\) with the corresponding endpoint condition.
	\end{proposition}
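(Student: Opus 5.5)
We follow the Szulkin--Weth reduction \cite{SzulkinWeth2010}. It suffices to show that for every \(w\in\Y_\G^+\setminus\{0\}\) the fibre \(\widehat{\Y}_\G(w)\) meets \(\N_{\lambda,\G}\) in exactly one point \(\hat m(w)\), that this point is the unique global maximizer of \(J_{\lambda,\G}\) on the fibre, and that every \(u\in\N_{\lambda,\G}\) lies on the fibre of \(u^+/\norm{u^+}_\G\). Granting this, \(\N_{\lambda,\G}=\{\hat m(w)\}\) and \(J_{\lambda,\G}(\hat m(w))=\Gamma_{\lambda,\G}(w)\), so the two infima coincide. The last point is immediate: \(u\notin\Y_\G^-\) means \(u^+\neq0\), and \(u=\norm{u^+}_\G\,(u^+/\norm{u^+}_\G)+u^-\).

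\textbf{Step 1 (key inequality).} For \(u\in\N_{\lambda,\G}\) and \(z=su+\eta\) with \(s\ge0\), \(\eta\in\Y_\G^-\), we aim to prove
\[
J_{\lambda,\G}(z)-J_{\lambda,\G}(u)\le -\tfrac12 Q_{\lambda,\G}^-(\eta)
+\int_\G\Bigl[f(\abs{u})\Ree\bigl(u\cdot\overline{\tfrac{s^2-1}{2}u+s\eta}\bigr)+F(\abs{u})-F(\abs{su+\eta})\Bigr]\dd x ,
\]
where \(Q^-_{\lambda,\G}(\eta)=\norm{\eta}_\G^2-\lambda\norm{\eta}_{L^2}^2\ge a\norm{\eta}_\G^2\) by \eqref{eq2.7}, applied with \(I=\{\lambda\}\). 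This identity is obtained by subtracting \(J'_{\lambda,\G}(u)[\tfrac{s^2-1}{2}u+s\eta]=0\), which holds because \(u\) satisfies both Nehari constraints. The quadratic part is then an exact completion of squares, using that \(Q_{\lambda,\G}\) is block diagonal with respect to \(\Y_\G^+\oplus\Y_\G^-\), since \(\lambda\) commutes with the spectral projections.

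The pointwise integrand is \(\le0\). This is the elementary inequality of Szulkin--Weth, valid for \(\xi,\zeta\in\C^2\) and \(s\ge0\):
\[
f(\abs\xi)\Ree\bigl(\xi\cdot\overline{\tfrac{s^2-1}{2}\xi+s\zeta}\bigr)+F(\abs\xi)-F(\abs{s\xi+\zeta})\le0 .
\]
It follows from (F4): \(f\) is nondecreasing, and \(F(\abs{\cdot})\) is convex by the computation above. One minimizes in \(\zeta\) (convexity) and reduces to a one-variable monotonicity check of \(t\mapsto\frac{s^2-1}{2}f(t)t^2+F(t)-F(st)\). Consequently \(J(z)<J(u)\) unless \(\eta=0\) and \(s=1\); equality in the scalar case uses strict positivity from (F3). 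Hence a Nehari point is the strict maximum of its fibre, and each fibre contains at most one Nehari point.

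\textbf{Step 2 (existence of a maximizer on each fibre).} On \(\widehat{\Y}_\G(w)\) we have
\[
J(tw+\eta)\le \tfrac{t^2}{2}C\norm{w}^2_\G-\tfrac a2\norm{\eta}^2_\G-\int F(\abs{tw+\eta}) .
\]
We show \(J\to-\infty\) as \(\norm{tw+\eta}\to\infty\) by the standard contradiction argument. Normalize, extract weak limits, and use \eqref{eq2.4}, i.e.\ \(F\ge c\,t^\theta-C t^2\) with \(\theta>2\), together with the fact that \(P^+\) of the limit is a nonzero multiple of \(w\), hence a nonzero function. Fatou's lemma then gives \(\int F/\norm{\cdot}^2\to\infty\).

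Moreover \(J>0\) near \(0\) along \(tw\): by \eqref{eq2.2} and the embedding \(\Y_\G\hookrightarrow L^p\), \(J(tw)\ge \tfrac a2 t^2\norm w^2-o(t^2)\). The map \(\eta\mapsto -J(tw+\eta)\) is convex, continuous and coercive on the closed set, so \(J\) is weakly upper semicontinuous on the fibre, which is a closed convex cone in \(\R w\oplus\Y_\G^-\). Therefore a maximizer exists. It has \(t>0\), so it is an interior critical point of the restriction. Since the fibre is a half-space in \(\R w\oplus\Y_\G^-\), criticality gives exactly \(J'(z)[z]=0\) and \(J'(z)[\eta]=0\), i.e.\ \(z\in\N_{\lambda,\G}\).

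\textbf{Main obstacle.} The main obstacle is the weak upper semicontinuity and coercivity in Step 2, which need \(P^+\) of the weak limit to be nonzero; on \(\R\) or \(\G\) one lacks compact embeddings. We intend to handle this as in Szulkin--Weth: the \(+\)-part lives on the one-dimensional span of \(w\), so strong convergence there is automatic, and weak lower semicontinuity of \(\norm{\eta}^2\) and of \(\int F\) (convex) suffices. The same argument is valid verbatim on \(\R\) and on \(\R_+\). It uses only self-adjointness, the spectral gap \eqref{eq2.6} (which also holds there), and the embedding into \(L^q\); the endpoint condition \(u_2(0)=0\) enters only through the operator domain.
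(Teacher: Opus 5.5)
Your proof follows the same route as the paper's (Szulkin--Weth). You show that every fibre \(\widehat{\Y}_\G(w)\) has a maximizer with \(t>0\), which therefore lies in \(\N_{\lambda,\G}\). You then show that every \(u\in\N_{\lambda,\G}\) maximizes \(J_{\lambda,\G}\) on its own fibre, by subtracting \(J'_{\lambda,\G}(u)[\tfrac{s^2-1}{2}u+s\eta]=0\) and using a pointwise inequality that needs only the monotonicity of \(f\).

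One claim should be dropped. You assert that a Nehari point is the \emph{strict} maximum of its fibre, so that each fibre meets \(\N_{\lambda,\G}\) exactly once, and you settle the case \(\eta=0\), \(s\neq1\) by ``strict positivity from (F3)''. This does not follow. (F4) only says that \(f\) is nondecreasing, and (F1)--(F4) allow \(f\) to be constant on an interval \([t_0,t_1]\subset(0,\infty)\). Equality \(J_{\lambda,\G}(su)=J_{\lambda,\G}(u)\) only forces \(f\) to be constant between \(\abs{u(x)}\) and \(s\abs{u(x)}\) for a.e.\ \(x\) with \(u(x)\neq0\), and positivity of \(F\) does not rule this out. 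Fortunately uniqueness is not needed:
\begin{itemize}
\item The non-strict form of your Step 1 already gives \(J_{\lambda,\G}(u)=\Gamma_{\lambda,\G}(u^+)\) for \(u\in\N_{\lambda,\G}\), hence \(d_\lambda(\G)\ge\inf\Gamma_{\lambda,\G}\).
\item Step 2 gives \(\Gamma_{\lambda,\G}(w)=J_{\lambda,\G}(z)\ge d_\lambda(\G)\).
\end{itemize}
These are exactly the paper's two inequalities, and the paper's proof never uses uniqueness of fibre maximizers.

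Two smaller remarks.
\begin{itemize}
\item The pointwise inequality follows at once from \eqref{eq2.8}, applied with \(r=\abs{\xi}\) and \(\abs{s\xi+\zeta}\) as the second argument. Combine it with \(\abs{s\xi+\zeta}^2-\abs{\xi}^2=(s^2-1)\abs{\xi}^2+2s\Ree(\xi\cdot\overline{\zeta})+\abs{\zeta}^2\) and \(f\ge0\). This is how the paper argues, and it avoids your minimization over \(\zeta\), which as sketched is imprecise.
\item In the anti-coercivity step, Fatou's lemma needs a.e.\ convergence. This comes from the \emph{local} compact embeddings, which do hold on \(\G\), \(\R\) and \(\R_+\) and which the paper uses; only global compactness is missing. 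Alternatively, \(F(t)\ge F(1)(t^\theta-t^2)\), which follows from \eqref{eq2.4}, together with weak lower semicontinuity of the \(L^\theta\)-norm gives \(r_n^{-2}\int F(r_n\abs{y_n})\to\infty\) with no compactness at all.
\end{itemize}
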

	
	\begin{proof}
		We verify the standard generalized Nehari reduction of Szulkin and Weth \cite[Theorem 4.4]{SzulkinWeth2010} in the present setting.
		
		The splitting
		\[
		\Y_\G=\Y_\G^+\oplus\Y_\G^-
		\]
		is orthogonal both in the form norm and in \(L^2\).  Hence, for every fixed \(\lambda\in(-mc^2,mc^2)\), the quadratic part of \(J_{\lambda,\G}\) is positive definite on \(\Y_\G^+\) and negative definite on \(\Y_\G^-\).  More precisely, \eqref{eq2.7} holds on any compact subinterval \(I\Subset(-mc^2,mc^2)\) containing \(\lambda\).
		
		Let
		\[
		\Psi(u)=\int_\G F(\abs{u})\,\dd x.
		\]
		By the convexity of \(\xi\mapsto F(\abs{\xi})\), the functional \(\Psi\) is convex and weakly lower semicontinuous on \(\Y_\G\).  By the growth estimate \eqref{eq2.2} and the embeddings \(\Y_\G\hookrightarrow L^q(\G)\), it is of class \(C^1\).
		
		We next prove anti-coercivity on each fibre.  Fix \(w\in\Y_\G^+\setminus\{0\}\), and let
		\[
		z_n=t_nw+\eta_n\in\widehat{\Y}_\G(w),
		\qquad
		\eta_n\in\Y_\G^-,
		\]
		with \(t_n+\norm{\eta_n}_\G\to\infty\).  If \(t_n\) is bounded, then necessarily \(\norm{\eta_n}_\G\to\infty\), and the negative quadratic part gives
		\[
		J_{\lambda,\G}(z_n)\to-\infty,
		\]
		because \(\Psi\ge0\).
		
		Assume now that \(t_n\to\infty\).  If \(\norm{\eta_n}_\G/t_n\to\infty\), the same conclusion follows again from the negative quadratic term.  Otherwise, after passing to a subsequence,
		\[
		\frac{\eta_n}{t_n}\rightharpoonup\eta
		\quad\text{in }\Y_\G^- .
		\]
		Set
		\[
		y_n=w+\frac{\eta_n}{t_n},
		\qquad
		y=w+\eta.
		\]
		Then \(y_n\rightharpoonup y\) in \(\Y_\G\), and \(y^+=w\ne0\).  Hence \(y\ne0\).  Choose a compact subgraph \(K\subset\G\) and \(\rho>0\) such that
		\[
		\abs{\{x\in K:\abs{y(x)}>2\rho\}}>0.
		\]
		By the compact embedding \(\Y_\G\hookrightarrow L^2(K)\), a subsequence satisfies \(y_n\to y\) in \(L^2(K)\), hence in measure.  Therefore, for all large \(n\), \(\abs{y_n}\ge\rho\) on a subset of \(K\) whose measure is bounded below by a positive constant.  Using \eqref{eq2.4},
		\[
		\frac1{t_n^2}
		\int_\G F(t_n\abs{y_n})\,\dd x
		\to\infty .
		\]
		The positive quadratic contribution grows at most like \(Ct_n^2\), while the negative quadratic contribution is nonpositive.  Hence
		\[
		J_{\lambda,\G}(z_n)\to-\infty.
		\]
		Thus the restriction of \(J_{\lambda,\G}\) to each fibre is anti-coercive.
		
		On bounded fibre sets, \(J_{\lambda,\G}\) is weakly upper semicontinuous. Indeed, the positive part of the fibre is one-dimensional and therefore strongly continuous along weakly convergent sequences, while the negative quadratic part is weakly upper semicontinuous and \(-\Psi\) is weakly upper semicontinuous.  Therefore the maximum
		\[
		\Gamma_{\lambda,\G}(w)
		=
		\max_{\widehat{\Y}_\G(w)}J_{\lambda,\G}
		\]
		exists.
		
		For small \(s>0\), \eqref{eq2.2} gives
		\[
		J_{\lambda,\G}(sw)>0.
		\]
		On the other hand, \(J_{\lambda,\G}(\eta)\le0\) for every \(\eta\in\Y_\G^-\).  Hence the fibre maximum cannot occur at \(t=0\).  If
		\[
		z=tw+\eta
		\qquad(t>0,\ \eta\in\Y_\G^-)
		\]
		is a fibre maximizer, then the first variation vanishes in the directions \(w\) and \(\Y_\G^-\).  Since \(t>0\), this is equivalent to
		\[
		J'_{\lambda,\G}(z)[z]=0,
		\qquad
		J'_{\lambda,\G}(z)[\eta^-]=0
		\quad\forall\,\eta^-\in\Y_\G^-.
		\]
		Thus \(z\in\N_{\lambda,\G}\).  This proves
		\[
		\inf_{w\in\Y_\G^+\setminus\{0\}}\Gamma_{\lambda,\G}(w)
		\ge
		d_\lambda(\G).
		\]
		
		Conversely, let \(u\in\N_{\lambda,\G}\).  We prove that \(u\) is a global maximizer on its fibre.  A general element of \(\widehat{\Y}_\G(u^+)\) may be written as
		\[
		tu+v,
		\qquad
		t\ge0,\quad v\in\Y_\G^-,
		\]
		because
		\[
		tu+v
		=
		tu^++(tu^-+v).
		\]
		Let \(B_\lambda\) be the real symmetric bilinear form associated with \(Q_{\lambda,\G}\).  From the Nehari identities,
		\[
		B_\lambda(u,u)=\Psi'(u)[u],
		\qquad
		B_\lambda(u,v)=\Psi'(u)[v]
		\quad(v\in\Y_\G^-).
		\]
		Using the orthogonal splitting and the negativity of \(B_\lambda\) on \(\Y_\G^-\), one obtains
		\begin{align*}
			J_{\lambda,\G}(u)-J_{\lambda,\G}(tu+v)
			&\ge
			\Psi(tu+v)-\Psi(u)
			+\frac{1-t^2}{2}\Psi'(u)[u]
			-t\Psi'(u)[v].
		\end{align*}
		The omitted term is \(-\frac12B_\lambda(v,v)\ge0\).
		
		It remains to check a pointwise inequality.  Let \(\xi,\zeta\in\C^2\), \(t\ge0\), and put
		\[
		r=\abs{\xi},
		\qquad
		s=\abs{\zeta}.
		\]
		Since \(f\) is nondecreasing,
		\begin{equation}\label{eq2.8}
			F(s)-F(r)
			\ge
			\frac12 f(r)(s^2-r^2)
			\qquad(r,s\ge0).
		\end{equation}
		Moreover, by Cauchy's inequality,
		\[
		t\Ree(\xi\cdot\overline{\zeta})
		-
		\frac{t^2+1}{2}\abs{\xi}^2
		\le
		\frac12(\abs{\zeta}^2-\abs{\xi}^2).
		\]
		Combining this with \eqref{eq2.8}, we get
		\[
		F(\abs{\zeta})-F(\abs{\xi})
		+
		\frac{1-t^2}{2}f(\abs{\xi})\abs{\xi}^2
		-
		t f(\abs{\xi})
		\Ree\bigl(\xi\cdot\overline{\zeta-t\xi}\bigr)
		\ge0 .
		\]
		Apply this with
		\[
		\xi=u(x),
		\qquad
		\zeta=tu(x)+v(x),
		\]
		and integrate over \(\G\).  We obtain
		\[
		J_{\lambda,\G}(u)\ge J_{\lambda,\G}(tu+v)
		\qquad(t\ge0,\ v\in\Y_\G^-).
		\]
		Equivalently,
		\[
		J_{\lambda,\G}(u)
		\ge
		J_{\lambda,\G}(tu^++\eta)
		\qquad(t\ge0,\ \eta\in\Y_\G^-).
		\]
		Therefore \(u\) is a global maximizer on \(\widehat{\Y}_\G(u^+)\).  Taking the infimum over \(u\in\N_{\lambda,\G}\) gives
		\[
		d_\lambda(\G)
		\ge
		\inf_{w\in\Y_\G^+\setminus\{0\}}\Gamma_{\lambda,\G}(w).
		\]
		The two inequalities prove the formula.  The proofs on \(\R\) and on \(\R_+\), with the corresponding endpoint condition, are identical.
	\end{proof}
	
	\section{Long pendant profiles}\label{Sec3}
	
	Throughout this section, \(I\) denotes a fixed compact interval with \(I\Subset(-mc^2,mc^2)\).
	The proof of \cref{Thm1.1} is divided into several components.  We first recall the one-dimensional full-line result, then construct the truncated half-profile on a long pendant, and finally prove the convergence of the corresponding generalized Nehari fibres.
	
	\begin{proposition}\label{Prop3.1}
		Let \(I\) be a compact interval with \(I\Subset(-mc^2,mc^2)\).  For every \(\lambda\in I\), the full-line problem has a least-energy critical point \(U_\lambda\in\N_{\lambda,\R}\) such that
		\[
		J'_{\lambda,\R}(U_\lambda)=0,
		\qquad
		J_{\lambda,\R}(U_\lambda)=d_\lambda^\infty .
		\]
		It may be chosen with the parity
		\[
		U_{\lambda,1}\ \text{even},
		\qquad
		U_{\lambda,2}\ \text{odd}.
		\]
		Consequently \(U_{\lambda,2}(0)=0\).  Moreover, the chosen family is compact in \(H^1(\R,\C^2)\), and there exist \(C_I,\alpha_I>0\) such that
		\begin{equation}\label{eq3.1}
			\abs{U_\lambda(x)}+\abs{U_\lambda'(x)}
			\le C_Ie^{-\alpha_I\abs{x}}
			\qquad(x\in\R,\ \lambda\in I).
		\end{equation}
		The map \(\lambda\mapsto d_\lambda^\infty\) is continuous on \(I\), and
		\begin{equation*}
			d_I^*:=\inf_{\lambda\in I}d_\lambda^\infty>0 .
		\end{equation*}
	\end{proposition}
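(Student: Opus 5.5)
The plan is to follow the strongly indefinite scheme of Szulkin--Weth on \(\Y_\R=H^{1/2}(\R,\C^2)\), using \cref{Prop2.15} on \(\R\). The key fact is that the functional is invariant under translations and under the parity involution
\[
(Su)(x)=(u_1(-x),-u_2(-x)).
\]
This involution commutes with \(\D_\R\), since \(\sigma_1\) anticommutes with the reflection combined with \(\sigma_3\)-conjugation. It therefore preserves \(\Y_\R^\pm\), and it preserves \(\abs{u}\).

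\emph{Existence.} Take a minimizing sequence on \(\N_{\lambda,\R}\). By Ekeland's principle applied to the reduced functional \(w\mapsto\Gamma_{\lambda,\R}(w)\) on the unit sphere of \(\Y_\R^+\), we may assume it is a Palais--Smale sequence for \(J_{\lambda,\R}\). Boundedness follows from (F3) combined with \eqref{eq2.3} and \eqref{eq2.7}. Vanishing is excluded by Lions' lemma: vanishing would give \(\int f(\abs{u_n})\abs{u_n}^2\to0\), and testing with \(u_n^+-u_n^-\) would then force \(u_n\to0\). This contradicts the bound \(d_\lambda^\infty\ge\inf_{\norm{w}=r}J>0\). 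After translating, the weak limit is a nontrivial critical point, and Fatou's lemma applied to \(H\ge0\) shows that it attains \(d_\lambda^\infty\).

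\emph{Parity.} Here I intend to use the fibre formula rather than a symmetrization inequality, because symmetrization is unavailable for the nonlocal spectral form. Given a ground state \(U\), elliptic bootstrap gives \(U\in H^1\), and \(U\) solves a first-order ODE system. The plan is to exploit ODE uniqueness. Since the equation is autonomous and one-dimensional, it has the conserved quantity
\[
\Phi=c\,\mathrm{Im}(\cdots)
\]
obtained from the Hamiltonian \(mc^2(\abs{u_1}^2-\abs{u_2}^2)+\lambda\abs{u}^2-2F(\abs u)\)-type first integral. Exponential decay forces \(\Phi=0\), and then phase considerations reduce \(U\) to \(e^{i\phi}\) times a solution of the form \((v,iw)\) with \(v,w\) real.

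For such a real solution, let \(x_0\) be a point where \(w(x_0)=0\); one exists because \(w\) is odd-like or changes sign, which follows from the first integral. Then
\[
U(x_0+\cdot)\quad\text{and}\quad S\bigl(U(x_0+\cdot)\bigr)
\]
solve the same ODE with the same Cauchy data at \(0\), so they coincide. Translating by \(x_0\) gives the parity \(U_{\lambda,1}\) even, \(U_{\lambda,2}\) odd. This ODE step is the main obstacle I expect: locating \(x_0\) may require a careful phase-plane analysis of the planar Hamiltonian system. As a fallback, I would minimize directly within the \(S\)-invariant subspace and show via a reflection/cut-and-paste argument that the symmetric level equals \(d_\lambda^\infty\).

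\emph{Decay and compactness.} For the bound \eqref{eq3.1}, write
\[
(\D+\lambda)U=g,\qquad g=f(\abs U)U,
\]
where \(f(\abs{U})\to0\) at infinity. The Green kernel of \(\D+\lambda\) decays like \(e^{-\sqrt{m^2c^4-\lambda^2}\abs{x}/c}\), so a comparison argument gives a decay rate \(\alpha_I\) for any \(\alpha_I<\min_I\sqrt{m^2c^4-\lambda^2}/c\). Uniformity in \(\lambda\in I\) comes from a compactness argument. Take ground states with \(\lambda_n\to\lambda\). The energies \(d_{\lambda_n}^\infty\) are bounded, by continuity of the fibre maxima in \(\lambda\) applied to a fixed test \(w\). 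The symmetric profiles cannot split or drift: they are centred by parity, and concentration-compactness shows that a symmetric split would cost twice the energy. Hence they converge in \(H^1\) to a ground state at \(\lambda\). This gives precompactness of the chosen family, uniform \(L^\infty\) smallness at large \(\abs x\), and hence a uniform constant \(C_I\).

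\emph{Continuity and positivity.} Upper semicontinuity of \(\lambda\mapsto d_\lambda^\infty\) follows from testing with a fixed fibre. Lower semicontinuity follows from the compactness just described. Finally, \(d_I^*>0\) follows from the uniform mountain-pass bound
\[
d_\lambda^\infty\ge\max_{r}\Bigl(\tfrac{a_I}{2}r^2-\eps C r^2-C_\eps C r^p\Bigr)>0,
\]
which uses \eqref{eq2.7} and \eqref{eq2.2} and is independent of \(\lambda\in I\).
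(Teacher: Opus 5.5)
\textbf{Verdict: there are genuine gaps.} Some of your deviations from the paper are fine. The uniform mountain-pass bound is a correct proof of \(d_I^*>0\), and more direct than the paper's continuity argument. Green's-function comparison also yields \eqref{eq3.1} once you have uniform \(L^\infty\) bounds and uniform smallness of \(U_\lambda\) at infinity.

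\textbf{The parity step is left open.} The first integral that gives the real form is the current \(j=2\Ree(U_1\overline{U_2})\). It is constant by the equation and vanishes because \(U\to0\); the Hamiltonian plays no role here. You also need that \(U\) never vanishes (Cauchy uniqueness), so that the locally defined phase is globally constant. More seriously, you never produce \(x_0\) with \(w(x_0)=0\). Calling \(w\) `odd-like' assumes the conclusion, and the zero Hamiltonian identity only gives \(\abs w\le C\abs v\). The missing idea is elementary. In your notation the system is \(c\,w'=(f(\abs U)-mc^2-\lambda)v\) and \(c\,v'=(\lambda-mc^2-f(\abs U))w\). If \(v\equiv0\), the second equation forces \(w\equiv0\), so \(v\not\equiv0\). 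Since \(v\to0\) at infinity, \(\abs v\) attains a positive maximum at some \(x_0\), where \(v'(x_0)=0\). Because \(\lambda-mc^2-f(\abs U)<0\), the second equation gives \(w(x_0)=0\), and your reflection plus Cauchy-uniqueness argument then goes through. Your fallback would not work: cut-and-paste or reflection of half a profile does not respect the nonlocal splitting \(\Y_\R=\Y_\R^+\oplus\Y_\R^-\). That is exactly the obstruction you invoked against symmetrization.

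\textbf{Compactness of the chosen family.} Your concentration--compactness sketch can be completed. If mass left the origin, parity and Fatou applied to \(H(\abs{U_{\lambda_n}})\) near \(\pm y_n\) would give energy at least \(2d_\lambda^\infty\), contradicting upper semicontinuity. But this only shows that \(U_{\lambda_n}\) subconverges to \emph{some} centred ground state at \(\lambda_*\). That is precompactness, not that the limit is the chosen \(U_{\lambda_*}\). The paper gets the identification from uniqueness of the centred profile. At the centre the zero-Hamiltonian identity reads \(F(a_\lambda)=\tfrac12(mc^2+\lambda)a_\lambda^2\) with \(a_\lambda=U_{\lambda,1}(0)\). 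By (F1) and (F3), \(t\mapsto F(t)t^{-2}\) increases strictly from \(0\) to \(\infty\), so \(a_\lambda\) is uniquely determined. Cauchy uniqueness for the data \((a_{\lambda_*},0)\) then identifies the limit. The same identity also gives the uniform amplitude bounds and the phase-plane decay that the paper uses instead of concentration--compactness.

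\textbf{Existence.} Ekeland's principle on \(w\mapsto\Gamma_{\lambda,\R}(w)\) yields a Palais--Smale sequence only if \(\Gamma_{\lambda,\R}\) is \(C^1\) on the unit sphere of \(\Y_\R^+\). In Szulkin--Weth this rests on uniqueness of fibre maximizers, which they derive from strict monotonicity of \(f\). Here (F4) only makes \(f\) nondecreasing. You must either prove uniqueness separately or, as the paper does, extract a Cerami sequence at a level at most \(d_\lambda^\infty\) from a linking theorem on truncated fibres, which needs no uniqueness.
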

	
	\begin{proof}
		See \cref{AppA}.
	\end{proof}
	
	\begin{lemma}\label{Lem3.2}
		Let \(U_{\lambda,+}=U_\lambda|_{[0,\infty)}\).  Then \(U_{\lambda,+}\) satisfies the degree-one Dirac-Kirchhoff boundary condition at \(0\), and
		\[
		J_{\lambda,\R_+}(U_{\lambda,+})
		=\frac12d_\lambda^\infty .
		\]
		Moreover \(U_{\lambda,+}\in\N_{\lambda,\R_+}\), where \(\R_+\) is endowed with the terminal condition \(u_2(0)=0\).
	\end{lemma}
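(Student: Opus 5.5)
The boundary condition is immediate from \cref{Prop3.1}: since \(U_{\lambda,2}\) is odd and continuous (indeed \(U_\lambda\in H^1\)), we have \(U_{\lambda,2}(0)=0\). Hence \(U_{\lambda,+}\in H^1(\R_+,\C^2)\) with \(u_2(0)=0\), that is, \(U_{\lambda,+}\in\dom(\D_+)\subset\Y_{\R_+}\).

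The key device I would use is the reflection operator
\[
(Ru)(x)=(u_1(-x),-u_2(-x)).
\]
This is the analogue of \eqref{eq2.1} on the line. It is unitary, commutes with \(\D_\R\), preserves \(\abs{u}\), and satisfies \(RU_\lambda=U_\lambda\). The even subspace \(\{Ru=u\}\cap\dom(\D_\R)\) is identified with \(\dom(\D_+)\) by restriction to \([0,\infty)\). Conversely, extending \(v\in\dom(\D_+)\) by \(v_1\) even and \(v_2\) odd gives an \(H^1(\R)\) function, precisely because \(v_2(0)=0\). Under this identification, with \(\sqrt2\) normalisation, \(\D_\R\) restricted to the even subspace is unitarily equivalent to \(\D_+\).

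The functional calculus then commutes with this equivalence. Consequently \(|\D_\R|^{1/2}\) and the spectral projections \(P^\pm_\R\) preserve the even subspace and act there as \(|\D_+|^{1/2}\) and \(P^\pm_+\). Writing \(E\) for the even extension operator, this gives for even \(u\)
\[
(u|_{\R_+})^\pm=(u^\pm)|_{\R_+},\qquad \norm{u^\pm|_{\R_+}}_+^2=\tfrac12\norm{u^\pm}_\R^2 .
\]
Each term of the action is therefore halved: the form norms by the identity above, and \(\int\abs{u}^2\), \(\int F(\abs{u})\) by evenness of \(\abs{u}\). This yields \(J_{\lambda,\R_+}(U_{\lambda,+})=\tfrac12J_{\lambda,\R}(U_\lambda)=\tfrac12 d_\lambda^\infty\).

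For the Nehari property, \(U_{\lambda,+}\notin\Y_{\R_+}^-\), since otherwise \(U_\lambda\in\Y_\R^-\), which contradicts \(U_\lambda\in\N_{\lambda,\R}\). Now take \(\eta\in\Y_{\R_+}^-\) and let \(E\eta\in\Y_\R^-\) be its even extension. The same halving applies to the derivative, since every term of \(J'\) is a bilinear pairing of even functions. Hence
\[
J'_{\lambda,\R_+}(U_{\lambda,+})[\eta]=\tfrac12J'_{\lambda,\R}(U_\lambda)[E\eta]=0,
\]
and the same computation with \(\eta\) replaced by \(U_{\lambda,+}\) gives \(J'_{\lambda,\R_+}(U_{\lambda,+})[U_{\lambda,+}]=0\). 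This also uses that \(U_\lambda\) is a genuine critical point.

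The main obstacle is the rigorous unitary equivalence. One must check that the even part of \(\dom(\D_\R)\) maps onto \(\dom(\D_+)\), including the converse direction from the condition \(v_2(0)=0\). One must also check that \(R\) commutes with \(\D_\R\), which amounts to checking that the sign flip on \(u_2\) intertwines \(-ic\sigma_1 d/dx\) under \(x\mapsto-x\). With these in hand, the spectral theorem transfers the form domains and projections automatically.
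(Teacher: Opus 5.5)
Your proof is correct, but it takes a different route from the paper's.

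\textbf{The paper's route.} The paper argues locally.
\begin{itemize}
\item The restriction solves the Euler--Lagrange equation on $(0,\infty)$, and $u_2(0)=0$ is the natural degree-one boundary condition, so $U_{\lambda,+}$ is a critical point of $J_{\lambda,\R_+}$.
\item The energy then comes from the identity $J-\tfrac12J'[u]=\int H(\abs{u})$, which holds at critical points on both $\R$ and $\R_+$, together with the evenness of $H(\abs{U_\lambda})$.
\item Finally, $U_{\lambda,+}\notin\Y_{\R_+}^-$ because testing the equation with $U_{\lambda,+}$ would give $Q_{\lambda,\R_+}(U_{\lambda,+})\ge0$, whereas $Q_{\lambda,\R_+}<0$ on $\Y_{\R_+}^-\setminus\{0\}$.
\end{itemize}
This never compares the nonlocal spectral splittings on $\R$ and $\R_+$: at a critical point the action reduces to a local integrand.

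\textbf{Your route.} You use the reflection $R$ to identify $\D_+$, unitarily, with the part of $\D_\R$ in the reducing subspace $\{Ru=u\}$. Functional calculus then transports $P^\pm$, $\abs{\D}^{1/2}$ and the form domains, so $J$ and $J'$ are literally halved on $R$-invariant functions. You say you need $U_\lambda$ to be a genuine critical point, but in fact $U_\lambda\in\N_{\lambda,\R}$ already suffices for the Nehari identities, and the energy identity needs no criticality at all.

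\textbf{What each buys.} The paper's argument is shorter. Yours costs the verification of the unitary equivalence, which you identify correctly and which goes through: $R\D_\R=\D_\R R$ follows from $\sigma_3\sigma_1=-\sigma_1\sigma_3$, and the odd extension of $v_2$ is $H^1$ exactly when $v_2(0)=0$. In return it gives more structure.
\begin{itemize}
\item It yields $P_+^\pm(u|_{\R_+})=(P_\R^\pm u)|_{\R_+}$ for every $R$-invariant $u\in\Y_\R$.
\item $R$-invariance of $J_{\lambda,\R}$ forces $J'_{\lambda,\R}(u)[\varphi]=0$ whenever $Ru=u$ and $R\varphi=-\varphi$. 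Combined with the previous point, this shows that even extension maps $\N_{\lambda,\R_+}$ into $\N_{\lambda,\R}$ and doubles the action, so $d_\lambda^+\ge\tfrac12d_\lambda^\infty$.
\item Together with the lemma, this gives $d_\lambda^+=\tfrac12d_\lambda^\infty$, attained by $U_{\lambda,+}$. The paper does not state this identity.
\end{itemize}
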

	
	\begin{proof}
		The boundary condition follows from the parity in \cref{Prop3.1}: \(U_{\lambda,2}(0)=0\).  The restriction solves the same Euler-Lagrange equation on \((0,\infty)\), and the boundary condition is exactly the natural degree-one Dirac-Kirchhoff condition.  Hence \(U_{\lambda,+}\) is a critical point of \(J_{\lambda,\R_+}\).
		
		Since \(U_\lambda\) is a critical point on \(\R\),
		\[
		J_{\lambda,\R}(U_\lambda)
		=
		J_{\lambda,\R}(U_\lambda)
		-\frac12J'_{\lambda,\R}(U_\lambda)[U_\lambda]
		=
		\int_\R H(\abs{U_\lambda})\,dx .
		\]
		The function \(H(\abs{U_\lambda})\) is even by the parity of \(U_\lambda\). Applying the same identity on the half-line gives
		\[
		J_{\lambda,\R_+}(U_{\lambda,+})
		=
		\int_0^\infty H(\abs{U_\lambda})\,dx
		=
		\frac12d_\lambda^\infty .
		\]
		
		It remains only to check that \(U_{\lambda,+}\notin\Y_{\R_+}^-\).  If \(U_{\lambda,+}\in\Y_{\R_+}^-\), then testing the critical equation with \(U_{\lambda,+}\) would give
		\[
		Q_{\lambda,\R_+}(U_{\lambda,+})
		=
		\int_0^\infty f(\abs{U_{\lambda,+}})\abs{U_{\lambda,+}}^2\,dx
		\ge0 .
		\]
		On the other hand, \(Q_{\lambda,\R_+}\) is strictly negative on \(\Y_{\R_+}^-\setminus\{0\}\), because \(\lambda\in(-mc^2,mc^2)\).  This is impossible.  Thus the positive spectral component of \(U_{\lambda,+}\) is nonzero, and the criticality identities imply \(U_{\lambda,+}\in\N_{\lambda,\R_+}\).
	\end{proof}
	
	\begin{lemma}\label{Lem3.3}
		Let \(\Pp\simeq[0,L]\) be a pendant path and place \(0\) at the terminal endpoint.  Let \(\chi_L\in C^\infty([0,L])\) satisfy
		\[
		\chi_L=1\text{ on }[0,L/2],
		\qquad
		\chi_L=0\text{ near }L,
		\qquad
		\abs{\chi_L'}\le C/L .
		\]
		Define
		\[
		W_{\lambda,L}(t)=\chi_L(t)U_{\lambda,+}(t)
		\]
		on \(\Pp\), using the concatenated coordinate described in \cref{Sec2}, and extend it by zero on \(\G\setminus\Pp\).  Then \(W_{\lambda,L}\in\dom(\D_\G)\), and
		\begin{equation*}
			\sup_{\lambda\in I}
			\left|
			J_{\lambda,\G}(W_{\lambda,L})-\frac12d_\lambda^\infty
			\right|
			\to0
			\qquad(L\to\infty).
		\end{equation*}
		Moreover,
		\begin{equation}\label{eq3.2}
			\sup_{\lambda\in I}
			\norm{W_{\lambda,L}^{\rm ext}-U_{\lambda,+}}_{H^1(\R_+)}
			\to0,
		\end{equation}
		where \(W_{\lambda,L}^{\rm ext}\) denotes the pendant function extended by zero to \(\R_+\).
	\end{lemma}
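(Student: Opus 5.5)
The plan is to prove the three assertions in order: domain membership, the \(H^1\) estimate \eqref{eq3.2}, and then the energy convergence. The energy convergence will be reduced to \eqref{eq3.2} through an exact identity \(J_{\lambda,\G}(W_{\lambda,L})=J_{\lambda,\R_+}(W^{\rm ext}_{\lambda,L})\).

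\emph{Domain membership.} In the concatenated coordinate of \cref{Sec2}, \(U_{\lambda,+}\in H^1\) and \(\chi_L\) is smooth, so \(W_{\lambda,L}\in H^1([0,L],\C^2)\). By the remark following \cref{Def2.4}, this means that on each original edge segment of the pendant the pieces are \(H^1\) and satisfy \eqref{eq2.5} at every suppressed transmissive degree-two vertex; the reorientation \eqref{eq2.1} preserves this. Since \(\chi_L=0\) near \(L\), the function vanishes identically near the attachment endpoint. Hence the zero extension is \(H^1\) on every edge of \(\G\), has zero traces at every vertex outside \(\Pp\), and trivially satisfies \eqref{eq2.5} there; this holds even if the attachment endpoint is an interior point of an edge. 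At the terminal vertex, \(W_{\lambda,L,2}(0)=U_{\lambda,2}(0)=0\) by the parity in \cref{Prop3.1}. Thus \(W_{\lambda,L}\in\dom(\D_\G)\subset\Y_\G\).

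\emph{The \(H^1\) estimate.} We have \(W^{\rm ext}_{\lambda,L}-U_{\lambda,+}=(\chi_L-1)U_{\lambda,+}\), which is supported in \([L/2,\infty)\). Its derivative is \(\chi_L'U_{\lambda,+}+(\chi_L-1)U_{\lambda,+}'\). Using \(\abs{\chi_L'}\le C/L\) and the uniform decay \eqref{eq3.1}, we get
\[
\norm{W^{\rm ext}_{\lambda,L}-U_{\lambda,+}}_{H^1(\R_+)}^2\le C\,(1+L^{-2})\int_{L/2}^\infty C_I^2e^{-2\alpha_Ix}\,\dd x\le C'e^{-\alpha_IL},
\]
uniformly in \(\lambda\in I\). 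This proves \eqref{eq3.2}.

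\emph{Energy convergence.} The point requiring care is that \(J_{\lambda,\G}\) is defined through the nonlocal spectral splitting. For elements of the operator domain, however, the closed form coincides with the operator pairing. For \(u\in\dom(\D_\G)\) this gives
\[
\norm{u^+}_\G^2-\norm{u^-}_\G^2=\langle \D_\G u,u\rangle_{L^2(\G)}=\Ree\int_\G\bigl(-ic\sigma_1u'+mc^2\sigma_3u\bigr)\cdot\overline u\,\dd x,
\]
which is a purely local expression. Since \(W_{\lambda,L}\) is supported in \(\Pp\), and the change of variables \eqref{eq2.1} preserves both \(\abs{u}\) and the Dirac expression, this integral equals the same integral for \(W^{\rm ext}_{\lambda,L}\in\dom(\D_+)\) on \(\R_+\). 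The terms \(\lambda\norm{u}_{L^2}^2\) and \(\int F(\abs{u})\) are local as well. Hence
\[
J_{\lambda,\G}(W_{\lambda,L})=J_{\lambda,\R_+}(W^{\rm ext}_{\lambda,L}),
\]
and, by \cref{Lem3.2}, \(J_{\lambda,\R_+}(U_{\lambda,+})=\tfrac12d_\lambda^\infty\). It therefore remains to show that \(J_{\lambda,\R_+}\) is Lipschitz on \(H^1(\R_+)\)-bounded subsets of \(\dom(\D_+)\), uniformly in \(\lambda\in I\).

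\emph{Uniform Lipschitz bound.} For the quadratic part, we write it as the pairing \(\langle(\D_++\lambda)u,u\rangle\) and use \(\abs{\langle(\D_++\lambda)u,u\rangle-\langle(\D_++\lambda)v,v\rangle}\le C\bigl(\norm{u}_{H^1}+\norm{v}_{H^1}\bigr)\norm{u-v}_{H^1}\), with \(C\) depending only on \(m,c,I\). For the nonlinear part, the \(H^1\) norms of \(U_{\lambda,+}\) and \(W^{\rm ext}_{\lambda,L}\) are bounded uniformly in \(\lambda\in I\) by the compactness of the family in \cref{Prop3.1}, and since \(0\le\chi_L\le1\), \(\abs{\chi_L'}\le C/L\). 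The one-dimensional embedding \(H^1\hookrightarrow L^\infty\) then gives a uniform sup bound \(R\). By the mean value theorem, \(\abs{F(\abs{a})-F(\abs{b})}\le \max_{[0,R]}f\cdot R\,\abs{a-b}\), so
\[
\Bigl|\int_0^\infty\bigl(F(\abs{u})-F(\abs{v})\bigr)\,\dd x\Bigr|\le C\bigl(\norm{u}_{L^2}+\norm{v}_{L^2}\bigr)\norm{u-v}_{L^2}.
\]
Combining the two bounds with \eqref{eq3.2} yields \(\sup_{\lambda\in I}\abs{J_{\lambda,\G}(W_{\lambda,L})-\tfrac12d_\lambda^\infty}\le Ce^{-\alpha_IL/2}\to0\).

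The main obstacle is the locality step: identifying the spectral form with the operator pairing on \(\dom(\D_\G)\), and transferring it through the concatenated and reoriented coordinate. This rests on the standard fact that the form domain contains the operator domain with \(\mathfrak d_\G(u,u)=\langle\D_\G u,u\rangle\), together with the intertwining property of \eqref{eq2.1}.
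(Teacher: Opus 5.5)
Your proposal is correct and follows essentially the same route as the paper: admissibility of the zero extension (from $\chi_L=0$ near $L$ and $U_{\lambda,2}(0)=0$), the exact locality identity $J_{\lambda,\G}(W_{\lambda,L})=J_{\lambda,\R_+}(\chi_LU_{\lambda,+})$, the exponential tail bound from \eqref{eq3.1} giving \eqref{eq3.2}, and uniform continuity of the half-line action. You justify locality more explicitly via $\mathfrak d_\G(u,u)=\langle\D_\G u,u\rangle$ on $\dom(\D_\G)$ and use a uniform $L^\infty$ bound instead of \eqref{eq2.2}; both variants are fine, with one cosmetic fix: in the mean-value step the factor should be $\max(\abs{a},\abs{b})\le\abs{a}+\abs{b}$ rather than $R$ to obtain your displayed $L^2$-product bound (with $R$ you only get an $L^1$ bound, which also suffices here because $(1-\chi_L)U_{\lambda,+}$ decays exponentially).
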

	
	\begin{proof}
		The function satisfies the terminal condition at \(0\), since \(U_{\lambda,2}(0)=0\).  At every suppressed degree-two vertex in the pendant chain, it is an \(H^1\)-function of the concatenated coordinate; hence it satisfies the transmission condition described after \cref{Def2.4}. It vanishes in a neighborhood of the attachment point \(L\).  Hence its zero extension is \(H^1\) across \(L\) when \(L\) lies in the interior of an edge, and it satisfies the vertex conditions when \(L\) is a vertex.  Thus \(W_{\lambda,L}\in\dom(\D_\G)\).
		
		For functions supported on the pendant, satisfying the terminal condition and vanishing near the attachment point, the closed form of \(\D_\G+\lambda\) agrees with the corresponding half-line form. Hence
		\[
		J_{\lambda,\G}(W_{\lambda,L})
		=
		J_{\lambda,\R_+}(\chi_LU_{\lambda,+}).
		\]
		By \eqref{eq3.1}, uniformly in \(\lambda\in I\),
		\[
		\norm{(1-\chi_L)U_{\lambda,+}}_{L^2(\R_+)}
		+
		\norm{(1-\chi_L)U_{\lambda,+}}_{L^p(\R_+)}
		+
		\norm{(\chi_LU_{\lambda,+})'-U_{\lambda,+}'}_{L^2(\R_+)}
		\le
		Ce^{-\alpha L/2}.
		\]
		The derivative estimate includes the term \(\chi_L'U_{\lambda,+}\), which is supported in \([L/2,L]\) and is bounded by \(CL^{-1}e^{-\alpha L/2}\).  Thus \eqref{eq3.2} follows.  The continuity of the quadratic form and the growth estimate \eqref{eq2.2} imply
		\[
		J_{\lambda,\R_+}(\chi_LU_{\lambda,+})
		\to
		J_{\lambda,\R_+}(U_{\lambda,+})
		\]
		uniformly on \(I\).  The conclusion follows from \cref{Lem3.2}.
	\end{proof}
	
	We next introduce the localization tools used to compare a long pendant with the endpoint half-line.  All constants and moduli in these tools are independent of the generalized star graph and of the geometry of its compact core.  In \cref{Lem3.5,Lem3.6} they depend only on \(m,c\), the fixed cutoff functions, and the displayed parameters \(R,\Omega\), or the compact half-line set \(K\).  In the fibre lemmas they may additionally depend on \(I,f\), the compact family of half-line profiles, and the convergence modulus in \eqref{eq3.12}.  For \(v_\lambda=U_{\lambda,+}\), these quantities depend only on \(I,m,c,f\). None depends on the shortest bounded edge, the vertex degrees, the number of edges, or the topology of the remaining compact core.
	
	\begin{lemma}\label{Lem3.4}
		Let \(\G\) contain a pendant path \(\Pp\simeq[0,L]\), with \(0\) the terminal endpoint and \(L\) the attachment endpoint.  Let \(\theta_L\in C^\infty([0,L])\) satisfy
		\[
		0\le \theta_L\le1,\qquad
		\theta_L=1\text{ on }[0,L/2],\qquad
		\theta_L=0\text{ near }L,\qquad
		|\theta_L'|\le C/L .
		\]
		Define
		\[
		E_L:\Y_{\R_+}\to\Y_\G,
		\qquad
		(E_Lh)(t)=\theta_L(t)h(t)\quad\text{on }\Pp,
		\qquad
		E_Lh=0\quad\text{on }\G\setminus\Pp ,
		\]
		and
		\[
		R_L:\Y_\G\to\Y_{\R_+},
		\qquad
		(R_Lu)(t)=\theta_L(t)u_{\Pp}(t)\quad(0\le t\le L),
		\qquad
		R_Lu=0\quad(t\ge L).
		\]
		Then \(E_L\) and \(R_L\) are bounded uniformly for \(L\ge1\) and in \(\G\).  Moreover,
		\begin{equation}\label{eq3.3}
			R_LE_Lh=\theta_L^2h\to h
			\qquad\text{in }\Y_{\R_+}
		\end{equation}
		for every \(h\in\Y_{\R_+}\), and the convergence is uniform for \(h\) in compact subsets of \(\Y_{\R_+}\).
		
		Let \(Q_{\lambda,\G}\) be the closed quadratic form associated with \(\D_\G+\lambda\), and let \(Q_{\lambda,+}\) be the corresponding half-line form. Then, for every compact set \(K\subset\Y_{\R_+}\),
		\begin{equation}\label{eq3.4}
			\sup_{\lambda\in I}\sup_{h\in K}
			\left|
			Q_{\lambda,\G}(E_Lh)-Q_{\lambda,+}(h)
			\right|
			\to0 .
		\end{equation}
		Moreover, if \(h\in\Y_{\R_+}\) is supported in \([0,R]\) and \(L>4R\), then
		\begin{equation}\label{eq3.5}
			Q_{\lambda,\G}(E_Lh)=Q_{\lambda,+}(h)
			\qquad(\lambda\in I).
		\end{equation}
	\end{lemma}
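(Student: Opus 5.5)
The plan is to reduce everything to two facts: an equivalence between the form norms and \(H^{1/2}\)-type norms that holds with graph-independent constants for functions supported on the pendant, and the observation that multiplication by a smooth cutoff with bounded derivative is bounded on the form domains.

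\emph{Step 1: identify the forms.} For functions in \(\dom(\D_\G)\) supported in \(\Pp\) and vanishing near the attachment point, the closed form of \(\D_\G\) is given by the same boundary-free integration-by-parts expression as the half-line form with condition \(u_2(0)=0\); this was already used in \cref{Lem3.3}. The operator identity \(\norm{\D_\G u}^2=c^2\norm{u'}^2+m^2c^4\norm{u}^2\) then gives, on the operator domain,
\[
\norm{|\D_\G| u}_{L^2(\G)}=\norm{(-c^2\Delta_{N/D}+m^2c^4)^{1/2}u}.
\]
Here the right-hand side involves a decoupled Laplacian: Neumann-type conditions on \(u_1\) and Dirichlet-type conditions on \(u_2\) at the terminal point, with Kirchhoff-type conditions elsewhere. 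By Heinz/Kato interpolation, the form norm \(\norm{\cdot}_\G\) is equivalent, with constants depending only on \(m,c\), to the \(1/2\)-interpolation norm between \(L^2\) and the graph norm. Taking square roots is exact, since \(|\D_\G|=(\D_\G^2)^{1/2}\).

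\emph{Step 2: uniform boundedness of \(E_L\) and \(R_L\).} Multiplication by \(\theta_L\) is bounded on \(L^2\) with norm \(\le1\). It is also bounded from \(\dom(\D_+)\) into \(\dom(\D_\G)\) and back, with norm \(\le 1+C/L\), because \((\theta_Lh)'=\theta_L h'+\theta_L'h\) and \(\theta_L\) preserves the vertex conditions: it is \(1\) near the terminal endpoint and \(0\) near \(L\). Interpolation then gives uniform bounds on the form domains, independent of \(\G\), since only the pendant coordinates enter.

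\emph{Step 3: the identities \eqref{eq3.3} and \eqref{eq3.5}.} The identity \(R_LE_L h=\theta_L^2h\) is immediate. For convergence, first take \(h\) compactly supported and smooth; then \(\theta_L^2h=h\) for large \(L\). For general \(h\), use the density of such \(h\) together with the uniform bound from Step 2. An \(\varepsilon/3\) argument then gives uniform convergence on compacts. Identity \eqref{eq3.5} holds because \(E_Lh=h\) there, and both forms agree by Step 1 for functions supported in \([0,R]\subset[0,L/2]\).

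\emph{Step 4: the convergence \eqref{eq3.4}.} By Step 1 and the definitions, \(Q_{\lambda,\G}(E_Lh)=Q_{\lambda,+}(\theta_Lh)\). This is continuous in \(\theta_Lh\) in \(\Y_{\R_+}\) uniformly in \(\lambda\in I\), since it is bounded by \(C\norm{\cdot}_+^2\). Combining this with Step 3, which gives \(\theta_Lh\to h\) uniformly on compacts, yields the claim.

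The main obstacle is Step 1. Both forms are built from nonlocal spectral data, so their agreement on pendant-supported functions is not local a priori. I would handle it via the domain inclusion: a function that is pendant-supported and lies in the form domain of the graph operator also lies in the half-line form domain, and the quadratic forms \(\langle \D u,u\rangle\) coincide on the dense cores.

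The catch is that the form \(\norm{u^+}^2-\norm{u^-}^2\) equals \(\langle \D u,u\rangle\) only on the operator domain. To extend beyond it, one uses that the cores of cutoff-multiplied operator-domain functions are form-norm dense, with norm equivalence uniform by Step 2.
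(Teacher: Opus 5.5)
Your proposal is correct and follows essentially the same route as the paper. Both obtain uniform bounds for \(E_L,R_L\) by interpolating the multiplier estimates on \(L^2\) and on the operator domains, prove \eqref{eq3.3} by density plus these uniform bounds, and verify the form identity on \(\dom(\D_+)\) before extending it to the form domain by continuity of the closed forms. The only real variation is that you prove \(Q_{\lambda,\G}(E_Lh)=Q_{\lambda,+}(\theta_Lh)\) for every \(h\in\Y_{\R_+}\) and read off \eqref{eq3.4} from \(\theta_Lh\to h\), whereas the paper goes through \eqref{eq3.5} and a finite net of compactly supported functions; the decoupled-Laplacian detour in your Step 1 is not needed, and the interior vertex conditions it induces on \(u_2\) are of anti-Kirchhoff, not Kirchhoff, type.
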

	
	\begin{proof}
		The form domain satisfies
		\[
		\Y_\G=\dom(|\D_\G|^{1/2})
		=
		[L^2(\G),\dom(\D_\G)]_{1/2}
		\]
		with equivalent norm, and similarly on \(\R_+\).  Multiplication by a bounded Lipschitz function continuous at vertices is bounded on \(L^2\), preserves the Dirac-Kirchhoff vertex conditions, and is bounded on \(\dom(\D)\), with norm controlled by the \(L^\infty\)-norm and the Lipschitz constant of the multiplier. For \(E_L\), the zero extension creates no mismatch at the attachment point because \(\theta_L=0\) near \(L\): it is \(H^1\) across \(L\) if this point is in the interior of an edge, and it satisfies the vertex conditions if \(L\) is a vertex.  The terminal condition at \(0\) is the same as on the half-line. For \(R_L\), the factor \(\theta_L\) makes the extension by zero past \(L\) admissible in either case.  For \(L\ge1\), one has \(\|\theta_L\|_\infty\le1\) and \(\|\theta_L'\|_\infty\le C\), so interpolation gives the asserted uniform boundedness.
		
		For \eqref{eq3.3}, first take \(h\in C_c^\infty(\R_+,\C^2)\) satisfying the terminal condition.  Then \(\theta_L^2h=h\) for all large \(L\).  The general case follows by density and uniform boundedness of the multipliers \(\theta_L^2\) on \(\Y_{\R_+}\).  Uniformity on compact subsets follows by a finite-net argument.
		
		If \(h\in\dom(\D_+)\) is supported in \([0,R]\) and \(L>4R\), then \(E_Lh\) is supported in the part of the pendant where the graph and half-line operators have the same differential expression and the same terminal condition.  Thus
		\[
		Q_{\lambda,\G}(E_Lh)=Q_{\lambda,+}(h).
		\]
		For \(h\in\Y_{\R_+}\) supported in \([0,R]\), approximate \(h\) in \(\Y_{\R_+}\) by \(\dom(\D_+)\)-functions satisfying the terminal condition and supported in a slightly larger fixed interval.  Since the forms are closed and \(E_L\) is uniformly bounded, the identity extends to every such \(h\).  The convergence \eqref{eq3.4} follows by approximating \(K\) by finitely many compactly supported functions and using the uniform boundedness of the forms on bounded form-norm sets.
	\end{proof}
	
	\begin{lemma}\label{Lem3.5}
		Let \(\G,\Pp,E_L,R_L\) be as in \cref{Lem3.4}.  Let \(P_\G^\pm\) and \(P_+^\pm\) be the positive and negative spectral projections of \(\D_\G\) and of the half-line operator \(\D_+\) on \(\R_+\) with terminal condition \(u_2(0)=0\).
		
		For every fixed \(R>0\) there exists a modulus \(\varepsilon_{L,R}\to0\), independent of the graph, such that if \(h\in\Y_{\R_+}\) is supported in \([0,R]\), then for \(L>4R\),
		\begin{equation}\label{eq3.6}
			\begin{split}
				&\norm{P_\G^\pm E_Lh-E_LP_+^\pm h}_{\Y_\G}
				+
				\norm{R_LP_\G^\pm E_Lh-P_+^\pm h}_{\Y_{\R_+}}  \\
				&\qquad\le
				\varepsilon_{L,R}\norm{h}_{\Y_{\R_+}} .
			\end{split}
		\end{equation}
		Consequently, for every compact set \(K\subset\Y_{\R_+}\),
		\begin{equation}\label{eq3.7}
			\sup_{h\in K}
			\norm{P_\G^\pm E_Lh-E_LP_+^\pm h}_{\Y_\G}
			\to0 ,
		\end{equation}
		and
		\begin{equation}\label{eq3.8}
			\sup_{h\in K}
			\norm{R_LP_\G^\pm E_Lh-P_+^\pm h}_{\Y_{\R_+}}
			\to0 .
		\end{equation}
		The convergence is uniform over all generalized star graphs containing such a pendant path.
		
		More generally, if \(\widetilde h_L(h)\in\Y_\G\), \(h\in K\), satisfies
		\[
		\sup_{h\in K}
		\norm{\widetilde h_L(h)-E_Lh}_{\Y_\G}\to0,
		\]
		then
		\begin{equation}\label{eq3.9}
			\sup_{h\in K}
			\norm{P_\G^\pm\widetilde h_L(h)-E_LP_+^\pm h}_{\Y_\G}
			\to0 .
		\end{equation}
	\end{lemma}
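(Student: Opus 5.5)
We will represent the spectral projections through resolvents and localize them. Since \(\sigma(\D_\G)\cap(-mc^2,mc^2)=\varnothing\) by \eqref{eq2.6}, and the same holds for \(\D_+\), the Riesz/Kato formula gives
\[
P^+_\G-P^-_\G=\operatorname{sign}(\D_\G)=\frac1\pi\int_{\R}(\D_\G-i\omega)^{-1}\dd\omega
\]
in the strong sense on a core. Alternatively, \(\operatorname{sign}(\D)=\D\,|\D|^{-1}\) with \(|\D|^{-1}=\frac{2}{\pi}\int_0^\infty(\D^2+\omega^2)^{-1}\dd\omega\), and the same for \(\D_+\). Since \(P^\pm=\frac12(1\pm\operatorname{sign})\) and \(E_L\) is uniformly bounded, it suffices to estimate
\[
\operatorname{sign}(\D_\G)E_Lh-E_L\operatorname{sign}(\D_+)h .
\]

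The key step is a resolvent localization estimate, uniform in the graph. Fix \(R\) and \(h\) supported in \([0,R]\), and \(\omega\in\R\). Let \(g=(\D_+-i\omega)^{-1}h\). Because \(\D_+\) is massive, a Combes--Thomas argument gives exponential decay \(|g(t)|+|g'(t)|\lesssim e^{-\beta(t-R)}\|h\|_{L^2}\) for \(t>R\), with \(\beta\) depending only on \(m,c\) (conjugate \(\D_+\) by \(e^{\beta t}\); the perturbation \(-ic\beta\sigma_1\) is small relative to the gap \(\sqrt{m^2c^4+\omega^2}\)). Then \(E_Lg\in\dom(\D_\G)\) by the argument of \cref{Lem3.4}, and
\[
(\D_\G-i\omega)E_Lg=E_Lh-ic\sigma_1\theta_L'g ,
\]
where we used \(\theta_L=1\) on the support of \(h\) because \(L>4R\). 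The commutator term is supported in \([L/2,L]\) and has size \(\lesssim L^{-1}e^{-\beta(L/2-R)}\). Hence
\[
(\D_\G-i\omega)^{-1}E_Lh-E_L(\D_+-i\omega)^{-1}h=(\D_\G-i\omega)^{-1}\bigl(ic\sigma_1\theta_L'g\bigr).
\]
Since \(\D_\G\) is self-adjoint with gap \(mc^2\), the resolvent on the right has norm at most \((m^2c^4+\omega^2)^{-1/2}\) into \(L^2\). Its \(\dom(|\D_\G|^{1/2})\)-norm is bounded by \((m^2c^4+\omega^2)^{-1/4}\), and this bound is independent of the graph.

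We then integrate in \(\omega\). For large \(|\omega|\), the resolvent terms individually decay like \(|\omega|^{-1}\), which is not integrable. We therefore use the squared form \((\D^2+\omega^2)^{-1}\), which decays like \(\omega^{-2}\), or equivalently subtract the free term. This means carrying out the same localization for \(\D^2\), whose commutator with \(\theta_L\) involves \(\theta_L'\) and \(\theta_L''\); both are \(O(L^{-1})\) and are supported where \(g\) is exponentially small. For \(h\in\dom(\D_+)\) this yields, in the form norm,
\[
\norm{\operatorname{sign}(\D_\G)E_Lh-E_L\operatorname{sign}(\D_+)h}_{\Y_\G}\le C_R e^{-\beta L/4}\norm{h}_{\dom(\D_+)} .
\]
The main obstacle is obtaining this with the \(\Y\)-norm of \(h\) on the right, not the stronger \(\dom(\D_+)\)-norm.

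To overcome it, we split \(h=h_{\le N}+h_{>N}\) with the spectral cutoff \(\mathbf 1_{|\D_+|\le N}\). The low part is handled by the estimate above, with constant \(C_RN^{1/2}\). The high part is bounded by \(N^{-0}\) only in terms of uniform boundedness, so we must be more careful: we use the fact that \(P^\pm\) are contractions in the \(\Y\) norms and that \(E_L\) is uniformly bounded. Applying the above on a dense set of smooth \(h\) supported in \([0,R+1]\), we obtain a modulus \(\varepsilon_{L,R}\) by a compactness argument, since the unit ball of such \(h\) is not compact. The fallback is to define \(\varepsilon_{L,R}\) as the operator norm of \(P^\pm_\G E_L-E_LP^\pm_+\) restricted to \(\{\supp h\subset[0,R]\}\). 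We then bound this operator norm by the resolvent integral via the Stone-type formula applied with form-norm estimates; the resolvent bounds in \(\Y\)-to-\(\Y\) norm are uniform because resolvents commute with \(|\D|^{1/2}\). The commutator term requires \(\norm{\theta_L'g}_{\Y^*}\)-type control. This holds since \(g\) on \([L/2,L]\) is controlled, via interior elliptic regularity of the free equation \((\D_+-i\omega)g=0\) there, by \(e^{-\beta L/4}\norm{h}_{L^2}\).

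Next we take the \(R_L\)-part of \eqref{eq3.6}. We write \(R_LP^\pm_\G E_Lh-P^\pm_+h=R_L(P^\pm_\G E_Lh-E_LP^\pm_+h)+(\theta_L^2-1)P^\pm_+h\). The first term is controlled by uniform boundedness of \(R_L\). The second is controlled because \(P^\pm_+h\) decays exponentially beyond \(R\), by the same resolvent decay, uniformly on the unit ball.

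Finally, \eqref{eq3.7} and \eqref{eq3.8} follow by approximating a compact \(K\) by finitely many compactly supported functions, using the uniform boundedness of \(E_L,R_L,P^\pm\). \eqref{eq3.9} then follows from the triangle inequality together with \(\norm{P^\pm_\G}\le1\) on \(\Y_\G\).
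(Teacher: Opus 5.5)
\textbf{There is a genuine gap.} Your setup matches the paper's: the product-rule identity $(\D_\G-i\omega)E_Lg=E_Lh-ic\sigma_1\theta_L'g$, a Combes--Thomas bound for the half-line resolvent, and the sign-resolvent integral. The argument breaks at the integration in $\omega$, which you yourself call the main obstacle.

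\textbf{Where it fails.} As you record it, the commutator term obeys $\norm{\theta_L'g}_{L^2}\lesssim L^{-1}e^{-\beta(L/2-R)}\norm{h}_{L^2}$, with no decay in $\omega$. Combined with $\norm{(\D_\G-i\omega)^{-1}}_{L^2\to\Y_\G}\lesssim(1+\abs{\omega})^{-1/2}$, this gives only $(1+\abs{\omega})^{-1/2}$, which is not integrable. Switching to $(\D^2+\omega^2)^{-1}$ makes the commutator first order, $[\D^2,\theta_L]=-c^2(2\theta_L'\partial_x+\theta_L'')$. That puts $g'$ into the error term, which is why you end with $\norm{h}_{\dom(\D_+)}$.

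None of the later repairs closes the gap:
\begin{enumerate}
\item The spectral cutoff $\mathbf 1_{\abs{\D_+}\le N}h$ destroys the condition $\supp h\subset[0,R]$, and the high-frequency part is not uniformly small.
\item A compactness argument is unavailable on the unit ball, which is not compact, as you note.
\item Defining $\varepsilon_{L,R}$ as the operator norm is circular: you must still show it tends to $0$.
\item The final bound ``$e^{-\beta L/4}\norm{h}_{L^2}$ by interior regularity'' again has no $\omega$-decay, so the $\omega$-integral still diverges. With $\Y^*\to\Y$ resolvent bounds, which do not decay at all, it is worse.
\end{enumerate}
So \eqref{eq3.6}, with the $\Y_{\R_+}$-norm and a graph-uniform modulus on the whole unit ball, is not established. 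A $\dom(\D_+)$ estimate plus density would recover only \eqref{eq3.7}--\eqref{eq3.8}. That is not enough, because \cref{Lem3.8} applies \eqref{eq3.6} to bounded, non-compact sequences such as $u_{R,n}=R_{L_n}(\chi_Rz_n)$.

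\textbf{The missing observation.} It is already inside your own Combes--Thomas argument: keep the spectral distance. The conjugation perturbation has norm at most $c\beta<\frac12 mc^2\le\frac12(m^2c^4+\omega^2)^{1/2}$. Hence the weighted resolvent has norm at most $2(m^2c^4+\omega^2)^{-1/2}$, and
\[
\norm{\mathbf 1_{[L/2,L]}(\D_+\mp i\omega)^{-1}\mathbf 1_{[0,R]}}_{L^2\to L^2}\le C(1+\abs{\omega})^{-1}e^{-\beta(L/2-R)} .
\]
Since $[\D,\theta_L]=-ic\sigma_1\theta_L'$ has order zero, only this $L^2$-norm of $g$ enters, not $g'$. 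Combined with the $L^2\to\Y_\G$ bound on the left resolvent, the integrand is bounded by $Ce^{-\beta L/4}(1+\abs{\omega})^{-3/2}\norm{h}_{L^2}$. This is integrable, and $\norm{h}_{L^2}\le(mc^2)^{-1/2}\norm{h}_{\Y_{\R_+}}$. This is exactly the paper's route: first-order resolvents only, with no $\D^2$ and no frequency splitting.

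\textbf{The restriction term.} Your claim that ``$P_+^\pm h$ decays beyond $R$ uniformly on the unit ball'' needs the same $\omega$-bookkeeping in the $\Y$-norm. The paper writes $(1-\theta_L^2)P_+^\pm h=(1-\theta_L^2)[\zeta_L,P_+^\pm]h$, with $\zeta_L=0$ on $[0,R]$ and $\zeta_L=1$ on $[L/2,\infty)$. It then bounds the two-resolvent integrand by $C(1+s)^{-3/2}\norm{\zeta_L'}_\infty$.
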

	
	\begin{proof}
		Put \(\omega=mc^2\).  Let
		\[
		\Omega\Subset
		\{z\in\C:\operatorname{dist}(z,(-\infty,-\omega]\cup[\omega,\infty))>0\}.
		\]
		For every graph considered here, \(\Omega\subset\rho(\D_\G)\), and the same is true for \(\D_+\).
		
		We first gived a graph-uniform Combes-Thomas estimate.  Let \(\varphi\) be a bounded real Lipschitz function on \(\G\), continuous at vertices and satisfying \(\abs{\varphi'}\le1\). Multiplication by \(e^{\pm\alpha\varphi}\) preserves \(\dom(\D_\G)\), and
		\[
		e^{\alpha\varphi}\D_\G e^{-\alpha\varphi}
		=
		\D_\G+ic\alpha\sigma_1\varphi'
		\]
		as operators from \(\dom(\D_\G)\) to \(L^2(\G)\).  For \(\alpha>0\) small enough, depending only on \(m,c\) and \(\Omega\), the perturbation has norm smaller than one half of the distance from \(\Omega\) to the common spectrum.  Thus
		\[
		\norm{e^{\alpha\varphi}(\D_\G-z)^{-1}e^{-\alpha\varphi}}_{L^2\to L^2}
		\le C_\Omega
		\qquad(z\in\Omega),
		\]
		with \(C_\Omega\) independent of the graph and of \(\|\varphi\|_\infty\). For nonempty closed sets \(A,B\subset\G\), choose the bounded weight
		\[
		\varphi(x)=\min\{\dist(x,A),\dist(A,B)\}.
		\]
		It vanishes on \(A\) and equals \(\dist(A,B)\) on \(B\). Since
		\[
		\D_\G(\D_\G-z)^{-1}=I+z(\D_\G-z)^{-1},
		\]
		the same estimate gives
		\[
		\norm{\mathbf 1_B(\D_\G-z)^{-1}\mathbf 1_A}_{L^2\to L^2}
		\le C_\Omega e^{-\alpha\dist(A,B)}
		\]
		for closed sets \(A,B\subset\G\).  If \(\dist(A,B)>0\), the resolvent equation and the first-order differential expression also give the local estimate
		\[
		\norm{\big((\D_\G-z)^{-1}\mathbf 1_Ah\big)|_B}_{H^1(B)}
		\le C_\Omega e^{-\alpha\dist(A,B)}\norm{h}_{L^2(\G)},
		\]
		where \(H^1(B)\) denotes the sum of the edgewise local \(H^1\)-norms.  Thus no sharp characteristic cutoff is asserted to preserve \(\dom(\D_\G)\).
		
		Let \(h\in C_c^\infty([0,R],\C^2)\) satisfy the terminal condition and assume \(L>4R\).  Set
		\[
		u_+(z):=(\D_+-z)^{-1}h .
		\]
		The half-line version of the preceding estimate gives
		\[
		\norm{u_+(z)}_{H^1([s,\infty))}
		\le C_\Omega e^{-\alpha(s-R)}
		\norm{h}_{L^2(\R_+)}
		\qquad(s\ge R).
		\]
		Use the cutoff \(\theta_L\) introduced in \cref{Lem3.4}.  Since \(L>4R\), one has \(\theta_L=1\) on \(\supp h\).  Extending the commutator term by zero outside \(\Pp\), the product rule gives
		\[
		(\D_\G-z)E_Lu_+(z)
		=
		E_Lh-ic\sigma_1\theta_L'u_+(z).
		\]
		The commutator term is supported in \([L/2,L]\), and therefore
		\[
		\norm{\theta_L'u_+(z)}_{L^2}
		\le
		C_\Omega e^{-\alpha(L/2-R)}
		\norm{h}_{L^2}.
		\]
		It follows that
		\[
		\norm{(\D_\G-z)^{-1}E_Lh
			-
			E_L(\D_+-z)^{-1}h}_{\Y_\G}
		\le
		C_{\Omega,R}e^{-\alpha L/4}\norm{h}_{\Y_{\R_+}},
		\qquad L>4R .
		\]
		By density and the uniform resolvent bounds, this estimate holds for every \(h\in\Y_{\R_+}\) supported in \([0,R]\).
		
		To pass from resolvents to spectral projections without using a compactly supported spectral cutoff, we use the sign-resolvent formula.  If \(D\) is self-adjoint and \(\sigma(D)\cap(-\omega,\omega)=\varnothing\), then
		\[
		\begin{split}
			P_D^\pm
			&=
			\frac12I
			\mathbin{\pm}\frac1\pi\,
			\mathop{\mathrm{s}\!-\!\lim}_{T\to\infty}
			\int_0^T D(D^2+s^2)^{-1}\,\dd s\\
			&=
			\frac12I
			\mathbin{\pm}\frac1{2\pi}\,
			\mathop{\mathrm{s}\!-\!\lim}_{T\to\infty}
			\int_0^T
			\bigl((D-is)^{-1}+(D+is)^{-1}\bigr)\,\dd s.
		\end{split}
		\]
		The identity terms cancel in \(P_\G^\pm E_L-E_LP_+^\pm\).  For \(z=\pm is\), the product-rule identity above and the resolvent identity give
		\[
		(\D_\G-z)^{-1}E_L-E_L(\D_+-z)^{-1}
		=
		ic(\D_\G-z)^{-1}\sigma_1\theta_L'
		(\D_+-z)^{-1},
		\]
		on functions supported in \([0,R]\).  Keeping the dependence on \(s\) in the preceding Combes--Thomas argument and using the common gap gives the two elementary bounds
		\[
		\norm{(D\mp is)^{-1}}_{L^2\to\Y_D}
		\le C(1+s)^{-1/2}
		\]
		and, for separated measurable sets \(A,B\),
		\[
		\norm{\mathbf 1_B(D\mp is)^{-1}\mathbf 1_A}_{L^2\to L^2}
		\le C(1+s)^{-1}e^{-\alpha\dist(A,B)}.
		\]
		Here \(\Y_D=\dom(|D|^{1/2})\) with its graph norm.  The first estimate follows directly from the spectral theorem; the second follows from the preceding weighted-conjugation argument with the spectral-distance dependence retained. Applying the first estimate to the left resolvent in the product-rule identity and the second to the right resolvent, across the support of \(\theta_L'\), yields
		\[
		\begin{split}
			&\norm{(\D_\G\mp is)^{-1}E_Lh
				-E_L(\D_+\mp is)^{-1}h}_{\Y_\G}\\
			&\qquad\le
			C_R e^{-\alpha L/4}(1+s)^{-3/2}
			\norm{h}_{\Y_{\R_+}},
			\qquad s\ge0,\quad L>4R,
		\end{split}
		\]
		with constants independent of the graph.  The right-hand side is integrable in \(s\).  Hence the sign-resolvent representation and dominated convergence give
		\[
		\norm{P_\G^\pm E_Lh-E_LP_+^\pm h}_{\Y_\G}
		\le
		\varepsilon_{L,R}\norm{h}_{\Y_{\R_+}},
		\qquad
		\varepsilon_{L,R}\to0,
		\]
		for \(h\) supported in \([0,R]\).
		
		It remains to estimate the restriction term.  We write
		\[
		\begin{split}
			R_LP_\G^\pm E_Lh-P_+^\pm h
			&=
			R_L\bigl(P_\G^\pm E_Lh-E_LP_+^\pm h\bigr)\\
			&\quad+
			(R_LE_L-I)P_+^\pm h .
		\end{split}
		\]
		The first term is controlled by the preceding estimate and the uniform boundedness of \(R_L\).  For the second term, choose a smooth half-line cutoff \(\zeta_L\) such that
		\[
		\zeta_L=0\text{ on }[0,R],\qquad
		\zeta_L=1\text{ on }[L/2,\infty),\qquad
		\norm{\zeta_L'}_\infty\le C/L .
		\]
		Since \(\zeta_Lh=0\) and \(1-\theta_L^2\) is supported where \(\zeta_L=1\),
		\[
		(1-\theta_L^2)P_+^\pm h
		=(1-\theta_L^2)[\zeta_L,P_+^\pm]h .
		\]
		The sign-resolvent representation and the spectral theorem give
		\[
		\norm{(D\mp is)^{-1}}_{L^2\to\Y_D}
		\le C(1+s)^{-1/2},
		\qquad
		\norm{(D\mp is)^{-1}}_{\Y_D\to L^2}
		\le C(1+s)^{-1}.
		\]
		Thus the two-resolvent formula for \([\zeta_L,P_+^\pm]\) has the integrable bound \(C(1+s)^{-3/2}\norm{\zeta_L'}_\infty\), and hence
		\[
		\norm{(R_LE_L-I)P_+^\pm h}_{\Y_{\R_+}}
		\le \frac{C}{L}\norm{h}_{\Y_{\R_+}}.
		\]
		This proves \eqref{eq3.6}.
		
		For a compact set \(K\subset\Y_{\R_+}\), choose \(R\) and finitely many \(h_1,\ldots,h_M\), each supported in \([0,R]\), such that \(K\) is covered by small \(\Y_{\R_+}\)-balls around the \(h_j\).  The uniform boundedness of \(E_L,R_L,P_\G^\pm,P_+^\pm\), together with the supported estimate, gives \eqref{eq3.7} and \eqref{eq3.8}.  Finally, \eqref{eq3.9} follows from
		\[
		\begin{split}
			\norm{P_\G^\pm\widetilde h_L(h)-E_LP_+^\pm h}_{\Y_\G}
			&\le
			\norm{\widetilde h_L(h)-E_Lh}_{\Y_\G}\\
			&\quad+
			\norm{P_\G^\pm E_Lh-E_LP_+^\pm h}_{\Y_\G}.
		\end{split}
		\]
	\end{proof}
	
	\begin{lemma}\label{Lem3.6}
		Let \(\zeta_R\) be a real Lipschitz cutoff on \(\G\), continuous at vertices and satisfying
		\[
		\norm{\zeta_R'}_{L^\infty(\G)}\le \frac{C}{R}.
		\]
		Then
		\begin{equation}\label{eq3.10}
			\norm{[P_\G^\pm,\zeta_R]u}_{\Y_\G}
			\le \frac{C}{R}\norm{u}_{\Y_\G}
			\qquad(u\in\Y_\G),
		\end{equation}
		where \(C\) is independent of the graph.
		
		If \(\chi_R,\psi_R\) are real Lipschitz functions, continuous at vertices, and
		\[
		\chi_R^2+\psi_R^2=1,
		\qquad
		\norm{\chi_R'}_{L^\infty}+\norm{\psi_R'}_{L^\infty}\le \frac{C}{R},
		\]
		then, for every \(\lambda\in I\),
		\begin{equation}\label{eq3.11}
			Q_{\lambda,\G}(u)
			=
			Q_{\lambda,\G}(\chi_Ru)
			+
			Q_{\lambda,\G}(\psi_Ru)
			\qquad(u\in\Y_\G).
		\end{equation}
	\end{lemma}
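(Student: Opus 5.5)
The plan is to treat the two assertions separately. Both rest only on the product rule for the first-order expression \(-ic\sigma_1\frac{d}{dx}\) and on spectral-theorem bounds that depend solely on the gap \(\sigma(\D_\G)\cap(-mc^2,mc^2)=\varnothing\) from \eqref{eq2.6}. This is what makes the constants graph-independent.

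\emph{Commutator estimate \eqref{eq3.10}.}
\begin{enumerate}
\item As in the proof of \cref{Lem3.4}, multiplication by a bounded Lipschitz function that is continuous at vertices preserves \(\dom(\D_\G)\). For \(u\in\dom(\D_\G)\) one has
\[
[\D_\G,\zeta_R]u=-ic\sigma_1\zeta_R'u .
\]
This is a bounded operator on \(L^2\) with norm at most \(cC/R\).
\item For \(z=\pm is\), the resolvent identity then gives
\[
[(\D_\G-z)^{-1},\zeta_R]=(\D_\G-z)^{-1}\bigl(ic\sigma_1\zeta_R'\bigr)(\D_\G-z)^{-1}.
\]
\item Insert this into the sign-resolvent representation of \(P_\G^\pm\) used in the proof of \cref{Lem3.5}. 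The identity term \(\tfrac12 I\) commutes with \(\zeta_R\), so only the integral survives.
\item Estimate the integrand from \(\Y_\G\) to \(\Y_\G\). Use \(\norm{(\D_\G\mp is)^{-1}}_{\Y_\G\to L^2}\le C(1+s)^{-1}\) on the right factor and \(\norm{(\D_\G\mp is)^{-1}}_{L^2\to\Y_\G}\le C(1+s)^{-1/2}\) on the left factor. Both follow from the spectral theorem, via \(\sup_{|\mu|\ge mc^2}|\mu|^{\mp1/2}|\mu-is|^{-1}\).
\item The integrand is then bounded by \(C R^{-1}(1+s)^{-3/2}\), which is integrable in \(s\). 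Its integral gives \eqref{eq3.10}, with \(C\) depending only on \(m,c\) and the cutoff constant.
\item The strong limit \(T\to\infty\) is handled by dominated convergence. The bound is proved first on \(\dom(\D_\G)\) and extended to \(\Y_\G\) by density, using the uniform boundedness of the multiplier on \(\Y_\G\).
\end{enumerate}

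\emph{Localization identity \eqref{eq3.11}.}
\begin{enumerate}
\item First take \(u\in\dom(\D_\G)\). Then \(\chi_Ru,\psi_Ru\in\dom(\D_\G)\) and
\[
Q_{\lambda,\G}(\chi_Ru)=\Ree\langle(\D_\G+\lambda)\chi_Ru,\chi_Ru\rangle .
\]
\item By the product rule,
\[
\langle \D_\G(\chi_Ru),\chi_Ru\rangle=\langle \chi_R^2\D_\G u,u\rangle-ic\int_\G\chi_R\chi_R'\,\langle\sigma_1u,u\rangle_{\C^2}\,\dd x .
\]
Since \(\sigma_1\) is Hermitian, \(\langle\sigma_1u,u\rangle_{\C^2}\) is real, so the last term is purely imaginary and drops out after taking real parts.
\item The same computation holds for \(\psi_R\), and the \(\lambda\)-term is local. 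Adding the two and using \(\chi_R^2+\psi_R^2=1\) gives \eqref{eq3.11} exactly on \(\dom(\D_\G)\). Because the operator is first order, there is no IMS-type error term.
\item To pass to all of \(\Y_\G\), use that \(\dom(\D_\G)\) is a form core, that \(Q_{\lambda,\G}\) is continuous on \(\Y_\G\), and that multiplication by \(\chi_R\) and by \(\psi_R\) is bounded on \(\Y_\G\) (the interpolation argument of \cref{Lem3.4}).
\end{enumerate}

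The main technical point is justifying the sign-resolvent integral in the \(\Y_\G\)-norm, rather than only in \(L^2\), together with the domain bookkeeping for the multipliers at vertices. I would first establish everything on \(\dom(\D_\G)\), where all manipulations are legitimate, and only then extend by density using the graph-independent operator bounds above.
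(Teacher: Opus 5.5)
Your proposal is correct and follows essentially the same route as the paper. For the commutator bound you use the sign-resolvent representation with the spectral-theorem estimates $C(1+s)^{-1/2}$ (from $L^2$ to $\Y_\G$) and $C(1+s)^{-1}$ (from $\Y_\G$ to $L^2$), which give an integrable $C R^{-1}(1+s)^{-3/2}$ integrand; for the localization identity you use the product rule on $\dom(\D_\G)$ followed by a form-core density argument. The only cosmetic difference is that you discard the cross term because it is purely imaginary, whereas the paper cancels it using $\chi_R\chi_R'+\psi_R\psi_R'=0$, and both work.
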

	
	\begin{proof}
		For \(u\in\dom(\D_\G)\),
		\[
		[\D_\G,\zeta_R]u=-ic\sigma_1\zeta_R'u .
		\]
		The identity term in the sign-resolvent representation used in the proof of \cref{Lem3.5} commutes with \(\zeta_R\), while the resolvent identity gives
		\[
		[P_\G^\pm,\zeta_R]
		=
		\mp\frac1{2\pi}
		\int_0^\infty
		\sum_{z\in\{is,-is\}}
		(\D_\G-z)^{-1}[\D_\G,\zeta_R]
		(\D_\G-z)^{-1}\,\dd s .
		\]
		The spectral theorem and the common gap give
		\[
		\norm{(\D_\G\mp is)^{-1}}_{L^2\to\Y_\G}
		\le C(1+s)^{-1/2},
		\qquad
		\norm{(\D_\G\mp is)^{-1}}_{\Y_\G\to L^2}
		\le C(1+s)^{-1}.
		\]
		Hence the two-resolvent integrand, as an operator on \(\Y_\G\), is bounded by \(C(1+s)^{-3/2}\norm{\zeta_R'}_\infty\), which is integrable and independent of the graph.  Since
		\[
		\norm{[\D_\G,\zeta_R]}_{L^2\to L^2}
		\le C\norm{\zeta_R'}_{L^\infty}
		\le C/R,
		\]
		we obtain \eqref{eq3.10}.
		
		To prove \eqref{eq3.11}, first take \(u\in\dom(\D_\G)\). Then \(\chi_Ru,\psi_Ru\in\dom(\D_\G)\).  Since
		\[
		\chi_R\chi_R'+\psi_R\psi_R'=0
		\]
		on every edge, expanding the first-order Dirac form gives
		\[
		\mathfrak d_\G(u,u)
		=
		\mathfrak d_\G(\chi_Ru,\chi_Ru)
		+
		\mathfrak d_\G(\psi_Ru,\psi_Ru).
		\]
		The \(L^2\)-term is exactly local:
		\[
		\norm{u}_{L^2(\G)}^2
		=
		\norm{\chi_Ru}_{L^2(\G)}^2+
		\norm{\psi_Ru}_{L^2(\G)}^2 .
		\]
		Thus \eqref{eq3.11} holds on \(\dom(\D_\G)\).  Since \(\dom(\D_\G)\) is a form core for \(\Y_\G\), and multiplication by \(\chi_R,\psi_R\) is bounded on \(\Y_\G\), the identity extends to all \(u\in\Y_\G\).
	\end{proof}
	
	\begin{lemma}\label{Lem3.7}
		Let \(\lambda_n\to\lambda\) in \(I\), and let \(w_n,w\in\Y_{\R_+}^+\) satisfy
		\[
		w_n\to w\quad\text{in }\Y_{\R_+},
		\qquad
		w\ne0 .
		\]
		Then
		\[
		\Gamma_{\lambda_n,\R_+}(w_n)
		\to
		\Gamma_{\lambda,\R_+}(w).
		\]
	\end{lemma}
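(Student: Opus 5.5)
The plan is to prove upper and lower semicontinuity separately, using the fibre structure from \cref{Prop2.15} on the fixed half-line \(\R_+\). Throughout, fibre elements of \(\widehat{\Y}_{\R_+}(w_n)\) are written as \(tw_n+\eta\) with \(t\ge0\) and \(\eta\in\Y_{\R_+}^-\). Since everything happens on the single space \(\Y_{\R_+}\), no localization is needed. The only issues are uniformity in \(\lambda\in I\) and control of the nonlinear term along the varying fibres.

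\emph{Lower bound.} Let \(z=t_0w+\eta_0\) be a maximizer of \(J_{\lambda,\R_+}\) on \(\widehat{\Y}_{\R_+}(w)\), which exists by the proof of \cref{Prop2.15}. The element \(z_n=t_0w_n+\eta_0\) lies in \(\widehat{\Y}_{\R_+}(w_n)\), and \(z_n\to z\) in \(\Y_{\R_+}\). The functional \((\lambda,u)\mapsto J_{\lambda,\R_+}(u)\) is jointly continuous: the quadratic part is affine in \(\lambda\) and continuous in \(u\), and \(\int F(\abs{u})\) is continuous by \eqref{eq2.2} and the embedding \(\Y_{\R_+}\hookrightarrow L^2\cap L^p\). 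Hence
\[
\liminf_n\Gamma_{\lambda_n,\R_+}(w_n)\ge\lim_n J_{\lambda_n,\R_+}(z_n)=\Gamma_{\lambda,\R_+}(w).
\]

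\emph{Upper bound.} Choose maximizers \(z_n=t_nw_n+\eta_n\) with \(J_{\lambda_n,\R_+}(z_n)=\Gamma_{\lambda_n,\R_+}(w_n)\). By the lower bound, these values are bounded below by \(\Gamma_{\lambda,\R_+}(w)/2>0\) for large \(n\).

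The first step is to show that \((t_n)\) and \((\eta_n)\) are bounded. This is the main obstacle, because the anti-coercivity argument in \cref{Prop2.15} was for a single fixed \(w\). I would redo it along the sequence. Suppose \(t_n+\norm{\eta_n}\to\infty\). Using \eqref{eq2.7} with the uniform constant \(a_I\), together with \(F\ge0\), we get
\[
0<J_{\lambda_n,\R_+}(z_n)\le Ct_n^2-\tfrac{a_I}2\norm{\eta_n}^2 .
\]
This forces \(t_n\to\infty\) and \(\norm{\eta_n}\le Ct_n\). Then set \(y_n=w_n+\eta_n/t_n\), which satisfies \(y_n\rightharpoonup y=w+\eta\) after passing to a subsequence. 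Since \(y^+=w\neq0\), we have \(y\neq0\). The compact local embedding on \(\R_+\) gives \(y_n\to y\) in \(L^2_{\rm loc}\), so \(\abs{y_n}\ge\rho\) on a set of positive measure. Then \eqref{eq2.4} with \(\theta>2\) gives \(t_n^{-2}\int F(t_n\abs{y_n})\to\infty\), contradicting the positivity of \(J_{\lambda_n,\R_+}(z_n)\). Hence \(t_n\) and \(\eta_n\) are bounded.

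With boundedness in hand, pass to a subsequence along which \(t_n\to t_*\) and \(\eta_n\rightharpoonup\eta_*\), and compare with the competitor \(z_*=t_*w+\eta_*\in\widehat{\Y}_{\R_+}(w)\). Split the functional as follows.
\begin{itemize}
\item The positive part \(t_n^2\bigl(\norm{w_n}^2+\lambda_n\norm{w_n}_{L^2}^2\bigr)\) converges to the corresponding quantity for \(t_*w\).
\item The negative quadratic part \(-\norm{\eta_n}^2+\lambda_n\norm{\eta_n}_{L^2}^2\) is a negative definite form whose coefficients converge uniformly. Writing it as the \(\lambda\)-form plus \((\lambda_n-\lambda)\norm{\eta_n}_{L^2}^2\), whose error term tends to zero by boundedness, shows it is weakly upper semicontinuous.
\item The term \(-\int F(\abs{z_n})\) is weakly upper semicontinuous, by convexity of \(\xi\mapsto F(\abs{\xi})\) and the boundedness of the sequence.
\end{itemize}
Together these give
\[
\limsup_n\Gamma_{\lambda_n,\R_+}(w_n)\le J_{\lambda,\R_+}(z_*)\le\Gamma_{\lambda,\R_+}(w).
\]
Since the argument applies to every subsequence, the full sequence converges, which proves the lemma.
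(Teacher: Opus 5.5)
Your proposal is correct and follows essentially the same route as the paper. The liminf comes from testing the \(n\)-th fibre with \(tw_n+\eta\) built from a limiting maximizer. The limsup comes from bounded maximizers plus weak upper semicontinuity of the negative quadratic part and of \(-\int F(\abs{\cdot})\). Where the paper only asserts uniform boundedness of the maximizers, you spell it out with a sequential anti-coercivity argument, which mirrors what the paper does later in Lemma~3.8.
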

	
	\begin{proof}
		The vectors \(w_n\) stay in a compact subset of \(\Y_{\R_+}^+\setminus\{0\}\), and \(\lambda_n\) stays in a compact subinterval of the spectral gap.  The fibre anti-coercivity argument in \cref{Prop2.15}, with constants uniform on this compact set, implies that all fibre maximizers over \(\widehat\Y_{\R_+}(w_n)\) are uniformly bounded.
		
		Let
		\[
		z_n=t_nw_n+\eta_n,
		\qquad
		t_n\ge0,\quad \eta_n\in\Y_{\R_+}^-,
		\]
		be maximizers.  Passing to a subsequence, we may assume that \(t_n\to t\ge0\) and \(\eta_n\rightharpoonup\eta\) in \(\Y_{\R_+}^-\).  The positive part \(t_nw_n\) converges strongly to \(tw\).  The negative quadratic part is weakly upper semicontinuous, and \(-\int_{\R_+}F(\abs{\cdot})\) is weakly upper semicontinuous because \(u\mapsto\int_{\R_+}F(\abs{u})\) is convex and weakly lower semicontinuous.  Hence
		\[
		\limsup_{n\to\infty}
		\Gamma_{\lambda_n,\R_+}(w_n)
		\le
		J_{\lambda,\R_+}(tw+\eta)
		\le
		\Gamma_{\lambda,\R_+}(w).
		\]
		
		Conversely, let \(z=tw+\eta\) be a maximizer on \(\widehat\Y_{\R_+}(w)\), with \(\eta\in\Y_{\R_+}^-\).  Then
		\[
		z_n^*:=tw_n+\eta\in\widehat\Y_{\R_+}(w_n),
		\qquad
		z_n^*\to z\quad\text{in }\Y_{\R_+}.
		\]
		Therefore
		\[
		\liminf_{n\to\infty}
		\Gamma_{\lambda_n,\R_+}(w_n)
		\ge
		\lim_{n\to\infty}J_{\lambda_n,\R_+}(z_n^*)
		=
		J_{\lambda,\R_+}(z)
		=
		\Gamma_{\lambda,\R_+}(w).
		\]
		The two inequalities prove the claim.
	\end{proof}
	
	\begin{lemma}\label{Lem3.8}
		Let \(I\) be a compact interval with \(I\Subset(-mc^2,mc^2)\).  For each \(L>0\), let \(\G_L\) be an arbitrary generalized star graph carrying a distinguished pendant path isometric to \([0,L]\), with terminal endpoint \(0\).  Let \(v_{\lambda,L}\in H^1([0,L],\C^2)\) vanish near \(L\) and satisfy the terminal condition at \(0\). We use the same notation for its zero extension to \(\G_L\), and write \(v_{\lambda,L}^{\rm ext}\) for its zero extension to \(\R_+\). Suppose that there is a family \(v_\lambda\in H^1(\R_+,\C^2)\) satisfying the terminal condition such that
		\begin{equation}\label{eq3.12}
			\sup_{\lambda\in I}
			\norm{v_{\lambda,L}^{\rm ext}-v_\lambda}_{H^1(\R_+)}
			\to0
			\qquad(L\to\infty).
		\end{equation}
		Assume also that \(\{v_\lambda:\lambda\in I\}\) is compact in \(H^1(\R_+,\C^2)\), that \(P_+^+v_\lambda\ne0\) for every \(\lambda\in I\), and that \(v_\lambda\) is a maximizer on its half-line fibre, namely
		\[
		J_{\lambda,\R_+}(v_\lambda)
		=
		\Gamma_{\lambda,\R_+}(P_+^+v_\lambda)
		\qquad(\lambda\in I).
		\]
		Then
		\begin{equation}\label{eq3.13}
			\sup_{\lambda\in I}
			\left|
			\Gamma_{\lambda,\G_L}\big(P_{\G_L}^+v_{\lambda,L}\big)
			-
			\Gamma_{\lambda,\R_+}\big(P_+^+v_\lambda\big)
			\right|
			\to0 .
		\end{equation}
		The convergence is uniform with respect to the choice of \(\G_L\) and of the distinguished pendant path.
	\end{lemma}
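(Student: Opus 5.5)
I will argue by contradiction along sequences, proving a lower bound and an upper bound separately. Suppose \eqref{eq3.13} fails. Then there are \(\delta>0\), \(L_n\to\infty\), graphs \(\G_n:=\G_{L_n}\) and \(\lambda_n\in I\) for which the difference exceeds \(\delta\). By compactness of \(I\) and of \(\{v_\lambda\}\) we may assume \(\lambda_n\to\lambda\) and \(v_{\lambda_n}\to v_\lambda\) in \(H^1(\R_+)\), hence in \(\Y_{\R_+}\). By \eqref{eq3.12}, also \(v^{\rm ext}_{\lambda_n,L_n}\to v_\lambda\). Set \(w_n:=P_{\G_n}^+v_{\lambda_n,L_n}\) and \(w:=P_+^+v_\lambda\neq0\). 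By \cref{Lem3.7}, \(\Gamma_{\lambda_n,\R_+}(P_+^+v_{\lambda_n})\to\Gamma_{\lambda,\R_+}(w)\). It therefore suffices to show \(\Gamma_{\lambda_n,\G_n}(w_n)\to\Gamma_{\lambda,\R_+}(w)=J_{\lambda,\R_+}(v_\lambda)\), with all constants graph-independent.

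\emph{Lower bound.} Because \(v_{\lambda_n,L_n}\) vanishes near \(L_n\), we have \(\norm{v_{\lambda_n,L_n}-E_{L_n}v_\lambda}_{\Y_{\G_n}}\to0\), using the uniform boundedness of \(E_L\) and the fact that \(\theta_L v_\lambda\to v_\lambda\). By \eqref{eq3.9}, \(w_n-E_{L_n}w\to0\) and \(P^-_{\G_n}v_{\lambda_n,L_n}-E_{L_n}P_+^-v_\lambda\to0\) in \(\Y_{\G_n}\). Take the competitor \(z_n:=w_n+P^-_{\G_n}v_{\lambda_n,L_n}=v_{\lambda_n,L_n}\), which lies in \(\widehat\Y_{\G_n}(w_n)\). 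Its value \(J_{\lambda_n,\G_n}(v_{\lambda_n,L_n})\) equals a half-line functional of \(v^{\rm ext}_{\lambda_n,L_n}\), as in the proof of \cref{Lem3.3}, and this converges to \(J_{\lambda,\R_+}(v_\lambda)\) by \eqref{eq3.4}, \eqref{eq2.2} and the \(H^1\) convergence. This gives \(\liminf\Gamma_{\lambda_n,\G_n}(w_n)\ge J_{\lambda,\R_+}(v_\lambda)\).

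\emph{Upper bound (main obstacle).} Let \(z_n=t_nw_n+\eta_n\) be fibre maximizers on \(\G_n\). First, I need uniform bounds on \(t_n\) and \(\norm{\eta_n}_{\Y_{\G_n}}\). The anti-coercivity argument of \cref{Prop2.15} used compact embeddings on a fixed graph, which are not available uniformly. Instead I will use the decomposition \(w_n\approx E_{L_n}w\), which is concentrated on the pendant. Since \(\abs{z_n}\ge\) pointwise contributions is not directly usable, I will apply the cutoff \(\chi=\theta_{L_n}\) and invoke \cref{Lem3.6}: \(Q(z_n)=Q(\chi z_n)+Q(\psi z_n)\) up to the mismatch of projections. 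Here \(\chi z_n\) is transported to \(\R_+\) via \(R_{L_n}\), and \cref{Lem3.5} gives \(R_{L_n}w_n\to w\) while \(P^+_{\R_+}R_{L_n}\eta_n\) is small by the commutator estimate \eqref{eq3.10}. The outer piece \(\psi z_n\) contributes essentially negative definite energy. Therefore
\[J_{\lambda_n,\G_n}(z_n)\le J_{\lambda_n,\R_+}\bigl(R_{L_n}z_n+r_n\bigr)-\tfrac{a_I}{2}\norm{P^-\psi z_n}^2+o(1)(1+\norm{z_n}^2),\]
with \(r_n\to0\) and \(R_{L_n}z_n+r_n\) lying in the half-line fibre of a vector converging to \(w\), up to an \(o(1)\)-perturbation. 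Here I drop \(\int F(\abs{\psi z_n})\ge0\) and use convexity of \(F\) together with \(\chi^2+\psi^2=1\), so that \(\int F(\abs{z_n})\ge\int F(\abs{\chi z_n})\) (since \(F\) is nondecreasing and \(\chi\le1\)). Uniform boundedness then follows because the half-line fibres near \(w\) are uniformly anti-coercive by the argument of \cref{Lem3.7}, and \(J_{\lambda_n,\G_n}(z_n)\ge\) the lower bound above. Given boundedness, the \(o(1)\) errors vanish, and \cref{Lem3.7} gives \(\limsup\Gamma_{\lambda_n,\G_n}(w_n)\le\Gamma_{\lambda,\R_+}(w)\). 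This is a contradiction.

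The delicate points are twofold. First, making the error term in the localization linear in \(\norm{z_n}^2\) with a coefficient tending to zero; for this I would use the \(C/L\) bounds of \cref{Lem3.5,Lem3.6}. Second, ensuring that the perturbed half-line element genuinely lies on a fibre \(\widehat\Y_{\R_+}(w_n')\) with \(w_n'\to w\). I would handle the second point by decomposing \(P_+^+R_{L_n}z_n=t_nP_+^+R_{L_n}w_n+P_+^+R_{L_n}\eta_n\) and absorbing the small second term into \(w_n'\) via the bound on \(t_n\).
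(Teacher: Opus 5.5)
Your reduction and lower bound are the same as the paper's: the competitor $v_{\lambda_n,L_n}\in\widehat\Y_{\G_n}(w_n)$, locality of the action, and \cref{Lem3.7}. The upper bound takes a genuinely different route.

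The paper proceeds in two stages. It first proves boundedness of the maximizers separately: it normalizes, transports the negative part by $R_{L_n}$, identifies the weak limit in $\Y^-_{\R_+}$ by testing against $P_+^+\phi$ with \cref{Lem3.5} on compact sets, and then uses superquadratic growth. It then localizes at a \emph{fixed} scale $R$, takes weak limits on $[0,2R]$, builds $\widehat z_R$ in the limiting fibre, and sends $R\to\infty$ after $n\to\infty$.

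You localize once at scale $L_n$, keep the term $-\frac{a_I}{2}\norm{P^-\psi z_n}^2$, and compare $R_{L_n}z_n$ directly with the half-line fibre of $P_+^+R_{L_n}z_n$ via \cref{Lem3.7}. This is shorter, since boundedness and the limsup come from one inequality. However, it needs an input the paper never states, and the tool you cite does not supply it.

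\textbf{The gap.} The claim that $P_+^+R_{L_n}\eta_n$ is small ``by \eqref{eq3.10}'' is not justified. \eqref{eq3.10} gives only $\norm{P^+_{\G_n}(\theta_{L_n}\eta_n)}_{\G_n}\le CL_n^{-1}\norm{\eta_n}_{\G_n}$, which concerns the \emph{graph} projection. \cref{Lem3.5} compares graph and half-line projections only on compact sets, or for $h$ supported in a fixed $[0,R]$. But $R_{L_n}\eta_n$ is an arbitrary bounded family spread over all of $[0,L_n]$. What you need is the graph-uniform operator bound
\[
\norm{R_LP^\pm_\G-P^\pm_+R_L}_{\Y_\G\to\Y_{\R_+}}\le\frac{C}{L}.
\]
This bound is true and provable with the paper's tools. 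For $v=(\D_\G-z)^{-1}g$ one has $R_Lv\in\dom(\D_+)$ and $(\D_+-z)R_Lv=R_Lg-ic\sigma_1\theta_L'v$, hence
\[
R_L(\D_\G-z)^{-1}-(\D_+-z)^{-1}R_L=-ic(\D_+-z)^{-1}\sigma_1\theta_L'(\D_\G-z)^{-1}.
\]
Inserting the resolvent bounds $C(1+s)^{-1/2}$ ($L^2\to\Y$) and $C(1+s)^{-1}$ ($\Y\to L^2$) into the sign-resolvent formula gives the integrable majorant $CL^{-1}(1+s)^{-3/2}$.

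You must supply this estimate. Weak smallness $P_+^+R_{L_n}\eta_n\rightharpoonup0$, which is all the paper's orthogonality argument yields, is not enough for your chain. \cref{Lem3.7} needs strong convergence of the directions, and $J_{\lambda,\R_+}$ is not weakly upper semicontinuous because of the term $\norm{u^+}^2$. That is exactly why the paper retreats to the fixed support $[0,2R]$, where \eqref{eq3.6} is uniform in $h$.

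\textbf{Smaller points, once that estimate is in hand.}

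(i) ``Anti-coercivity of half-line fibres near $w$'' only covers the case $t_n/\norm{z_n}\to a>0$. If $t_n/\norm{z_n}\to0$, the direction of $P_+^+R_{L_n}z_n$ need not be near $w$. The simplest fix is to argue on $\G_n$ itself, as the paper does: $Q_{\lambda_n,\G_n}(z_n/\norm{z_n})\le Ca_n^2-a_I\norm{\xi_n}^2\to-a_I$.

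(ii) Renormalizing $P_+^+R_{L_n}z_n/t_n\to w$ requires a \emph{lower} bound $t_n\ge c>0$. This follows from the positive lower bound on $J(z_n)$ given by your competitor, together with $J(z_n)\le\frac12Q(t_nw_n)\le Ct_n^2$.

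(iii) Identity \eqref{eq3.11} needs $\chi^2+\psi^2=1$ with $\psi$ Lipschitz. Take $\chi=\cos\phi_L$ and $\psi=\sin\phi_L$ with $\abs{\phi_L'}\le C/L$, and define $E_L,R_L$ with this $\chi$, rather than setting $\psi=\sqrt{1-\theta_L^2}$.

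(iv) Only the set $\{v_\lambda\}$ is assumed compact, so the limit $v_*$ of $v_{\lambda_n}$ need not be $v_\lambda$ at the limit parameter. This is harmless, because fibre-maximality of $v_*$ passes to the limit via \cref{Lem3.7}.

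(v) The perturbation $r_n$ is unnecessary. $R_{L_n}z_n\in\widehat\Y_{\R_+}(P_+^+R_{L_n}z_n)$ trivially, and $\Gamma$ is invariant under positive rescaling of the direction.
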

	
	\begin{proof}
		Let \(E_L\) and \(R_L\) be as in \cref{Lem3.4}.  Write
		\[
		P_L^\pm=P_{\G_L}^\pm,
		\qquad
		\Y_L=\Y_{\G_L},
		\qquad
		J_{\lambda,L}=J_{\lambda,\G_L}.
		\]
		Set
		\[
		w_{\lambda,L}:=P_L^+v_{\lambda,L},
		\qquad
		w_\lambda:=P_+^+v_\lambda .
		\]
		By \eqref{eq3.12}, compactness of \(\{v_\lambda:\lambda\in I\}\), and \cref{Lem3.5},
		\begin{equation}\label{eq3.14}
			\sup_{\lambda\in I}
			\norm{w_{\lambda,L}-E_Lw_\lambda}_{\Y_L}
			\to0,
			\qquad
			\sup_{\lambda\in I}
			\norm{R_Lw_{\lambda,L}-w_\lambda}_{\Y_{\R_+}}
			\to0 .
		\end{equation}
		Since \(\{w_\lambda:\lambda\in I\}\) is compact in \(\Y_{\R_+}\) and does not contain \(0\), there is \(\delta>0\) such that
		\begin{equation}\label{eq3.15}
			\norm{w_\lambda}_{\Y_{\R_+}}\ge\delta
			\qquad(\lambda\in I).
		\end{equation}
		Thus \(w_{\lambda,L}\ne0\) for all large \(L\), uniformly in \(\lambda\).
		
		It is enough to prove sequential convergence.  Let \(L_n\to\infty\) and \(\lambda_n\in I\).  Passing to a subsequence, assume
		\[
		\lambda_n\to\lambda_*\in I,
		\qquad
		v_{\lambda_n}\to v_*
		\quad\text{in }H^1(\R_+,\C^2).
		\]
		Put
		\[
		\omega_n=P_+^+v_{\lambda_n},
		\qquad
		\omega_*=P_+^+v_*.
		\]
		Then \(\omega_n\to\omega_*\) in \(\Y_{\R_+}\), and \(\omega_*\neq 0\).  By \eqref{eq3.14},
		\begin{equation}\label{eq3.16}
			w_n:=w_{\lambda_n,L_n}
			=
			E_{L_n}\omega_n+o(1)
			\quad\text{in }\Y_{\G_{L_n}},
			\qquad
			R_{L_n}w_n\to\omega_*
			\quad\text{in }\Y_{\R_+}.
		\end{equation}
		We prove
		\begin{equation}\label{eq3.17}
			\Gamma_{\lambda_n,\G_{L_n}}(w_n)
			-
			\Gamma_{\lambda_n,\R_+}(\omega_n)
			\to0 .
		\end{equation}
		By \cref{Lem3.7},
		\begin{equation}\label{eq3.18}
			\Gamma_{\lambda_n,\R_+}(\omega_n)
			\to
			\Gamma_{\lambda_*,\R_+}(\omega_*).
		\end{equation}
		
		In the remainder of the sequential argument, \(o_n(1)\) denotes a quantity tending to zero as \(n\to\infty\) with \(R\) fixed, whereas \(o_R(1)\) denotes a quantity satisfying
		\[
		\lim_{R\to\infty}\limsup_{n\to\infty}\abs{o_R(1)}=0.
		\]
		Thus \(n\to\infty\) is always taken before \(R\to\infty\), and all constants are independent of the graphs \(\G_{L_n}\) and of their compact-core decorations.
		
		Let
		\[
		z_n=t_nw_n+\eta_n,
		\qquad
		t_n\ge0,\quad \eta_n\in\Y_{\G_{L_n}}^-,
		\]
		be a maximizer of \(J_{\lambda_n,\G_{L_n}}\) on \(\widehat\Y_{\G_{L_n}}(w_n)\).
		
		We first prove that the maximizers are uniformly bounded:
		\begin{equation}\label{eq3.19}
			0\le t_n\le C,
			\qquad
			\norm{\eta_n}_{\G_{L_n}}\le C .
		\end{equation}
		By the fibre-maximizing assumption, the compactness of the half-line profile family, and \eqref{eq3.15}, one has
		\[
		\gamma_I
		:=
		\inf_{\lambda\in I}
		\Gamma_{\lambda,\R_+}(P_+^+v_\lambda)
		>0.
		\]
		Indeed, on the fixed half-line the compact family \(\{P_+^+v_\lambda:\lambda\in I\}\) is bounded away from zero; the uniform spectral coercivity and the fixed half-line Sobolev embedding therefore give a common positive lower bound by testing a sufficiently small multiple of each positive direction. Moreover, \eqref{eq3.12} and the locality of the quadratic and nonlinear terms give
		\[
		\sup_{\lambda\in I}
		\left|
		J_{\lambda,\G_L}(v_{\lambda,L})
		-
		J_{\lambda,\R_+}(v_\lambda)
		\right|
		\to0
		\qquad(L\to\infty),
		\]
		uniformly with respect to the graph.  Since
		\[
		v_{\lambda_n,L_n}
		=
		w_n+P_{\G_{L_n}}^-v_{\lambda_n,L_n}
		\in\widehat\Y_{\G_{L_n}}(w_n),
		\]
		we obtain, for all sufficiently large \(n\),
		\begin{equation}\label{eq3.20}
			\begin{split}
				J_{\lambda_n,\G_{L_n}}(z_n)
				&=
				\Gamma_{\lambda_n,\G_{L_n}}(w_n)\\
				&\ge
				J_{\lambda_n,\G_{L_n}}(v_{\lambda_n,L_n})
				\ge \frac12\gamma_I=: \mu>0 .
			\end{split}
		\end{equation}
		
		Assume by contradiction that \(\norm{z_n}_{\G_{L_n}}\to\infty\).  Set
		\[
		r_n:=\norm{z_n}_{\G_{L_n}},
		\qquad
		y_n:=\frac{z_n}{r_n},
		\qquad
		a_n:=\frac{t_n}{r_n},
		\qquad
		\xi_n:=\frac{\eta_n}{r_n}.
		\]
		The spectral splitting gives
		\[
		1=a_n^2\norm{w_n}_{\G_{L_n}}^2+\norm{\xi_n}_{\G_{L_n}}^2 .
		\]
		If \(a_n\to0\), then \(\norm{\xi_n}_{\G_{L_n}}\to1\).  By the uniform spectral-gap coercivity,
		\[
		Q_{\lambda_n,\G_{L_n}}(y_n)
		\le
		Ca_n^2-a_I\norm{\xi_n}_{\G_{L_n}}^2
		\le
		-\frac{a_I}{2}
		\]
		for all large \(n\).  Since \(F\ge0\), this gives
		\[
		J_{\lambda_n,\G_{L_n}}(z_n)
		\le
		-\frac{a_I}{4}r_n^2
		\to-\infty,
		\]
		contradicting \eqref{eq3.20}.  Therefore, after passing to a subsequence,
		\begin{equation}\label{eq3.21}
			a_n\to a>0 .
		\end{equation}
		
		Define
		\[
		\bar\xi_n:=R_{L_n}\xi_n\in\Y_{\R_+}.
		\]
		The sequence \(\bar\xi_n\) is bounded in \(\Y_{\R_+}\).  Passing to a subsequence,
		\[
		\bar\xi_n\rightharpoonup \xi
		\quad\text{in }\Y_{\R_+},
		\qquad
		\bar\xi_n\to\xi
		\quad\text{in }L^q_{\rm loc}(\R_+)
		\]
		for every \(2\le q<\infty\).  We claim that \(\xi\in\Y_{\R_+}^-\).  Let \(\phi\in\Y_{\R_+}\).  Since \(\xi_n\in\Y_{\G_{L_n}}^-\),
		\[
		\bigl(\xi_n,P_{\G_{L_n}}^+E_{L_n}\phi\bigr)_{L^2(\G_{L_n})}=0 .
		\]
		By \cref{Lem3.5},
		\[
		P_{\G_{L_n}}^+E_{L_n}\phi
		=
		E_{L_n}P_+^+\phi+o(1)
		\qquad\text{in }\Y_{\G_{L_n}},
		\]
		and hence in \(L^2\).  Therefore
		\[
		\bigl(\xi_n,E_{L_n}P_+^+\phi\bigr)_{L^2(\G_{L_n})}\to0 .
		\]
		For every \(g\in L^2(\R_+,\C^2)\), the definitions of \(E_L\) and \(R_L\) give
		\[
		\bigl(\xi_n,E_{L_n}g\bigr)_{L^2(\G_{L_n})}
		=
		\bigl(R_{L_n}\xi_n,g\bigr)_{L^2(\R_+)} .
		\]
		Taking \(g=P_+^+\phi\) and passing to the weak \(L^2\)-limit gives
		\[
		(\xi,P_+^+\phi)_{L^2(\R_+)}=0 .
		\]
		Since \(\phi\) is arbitrary, \(\xi\perp_{L^2}\Y_{\R_+}^+\), and hence \(\xi\in\Y_{\R_+}^-\).
		
		By \eqref{eq3.16} and \eqref{eq3.21},
		\[
		R_{L_n}y_n
		=
		a_nR_{L_n}w_n+\bar\xi_n
		\rightharpoonup
		y:=a\omega_*+\xi
		\quad\text{in }\Y_{\R_+}.
		\]
		Since \(P_+^+\xi=0\) and \(a\omega_*\neq0\), one has \(y\ne0\).  Hence there exist \(R_0>0\), \(\rho>0\), and a measurable set \(A\subset[0,R_0]\) of positive measure such that \(\abs{y}>2\rho\) on \(A\).  Strong local \(L^2\)-convergence gives, after passing to a subset of \(A\) of positive measure,
		\[
		\abs{R_{L_n}y_n}\ge\rho
		\qquad\text{on }A
		\]
		for all large \(n\).  On \(A\), \(R_{L_n}y_n\) agrees with the pendant representative of \(y_n\).  Therefore, by \eqref{eq2.4},
		\[
		\frac1{r_n^2}
		\int_{\G_{L_n}}F(r_n\abs{y_n})\,dx
		\ge
		\frac{|A|}{r_n^2}F(r_n\rho)
		\to\infty .
		\]
		The positive quadratic part of \(J_{\lambda_n,\G_{L_n}}(z_n)\) is \(O(r_n^2)\), whereas the negative quadratic part is nonpositive.  Hence
		\[
		J_{\lambda_n,\G_{L_n}}(z_n)\to-\infty,
		\]
		again contradicting \eqref{eq3.20}.  This proves \eqref{eq3.19}.
		
		We next localize the action from above.  Let \(\chi_R,\psi_R\) be real smooth cutoffs on the pendant, extended continuously to the whole graph, such that
		\[
		\chi_R^2+\psi_R^2=1,\qquad
		0\le\chi_R,\psi_R\le1,\qquad
		\chi_R=1\text{ on }[0,R],
		\]
		\[
		\chi_R=0\text{ outside }[0,2R],
		\qquad
		\abs{\chi_R'}+\abs{\psi_R'}\le C/R .
		\]
		We extend \(\chi_R\) by \(0\) outside the pendant and \(\psi_R\) by \(1\) outside the pendant.  For \(L_n>2R\), both are constant in a neighborhood of the attachment point, so these extensions are Lipschitz whether that point is a vertex or lies in the interior of an edge.
		
		By compactness of \(\{\omega_n\}\subset\Y_{\R_+}\),
		\[
		\sup_n\norm{\psi_R\omega_n}_{\Y_{\R_+}}=o_R(1).
		\]
		Using \eqref{eq3.16}, boundedness of \(t_n\), and boundedness of multiplication by \(\psi_R\),
		\begin{equation*}
			\norm{\psi_Rt_nw_n}_{\G_{L_n}}=o_R(1).
		\end{equation*}
		Moreover,
		\[
		\begin{split}
			P_{\G_{L_n}}^+(\psi_Rz_n)
			&=
			\psi_RP_{\G_{L_n}}^+z_n+
			[P_{\G_{L_n}}^+,\psi_R]z_n\\
			&=
			\psi_Rt_nw_n+
			[P_{\G_{L_n}}^+,\psi_R]z_n .
		\end{split}
		\]
		By \eqref{eq3.10} and \eqref{eq3.19},
		\[
		\norm{[P_{\G_{L_n}}^+,\psi_R]z_n}_{\G_{L_n}}
		\le
		\frac{C}{R}.
		\]
		Thus
		\begin{equation*}
			\norm{P_{\G_{L_n}}^+(\psi_Rz_n)}_{\G_{L_n}}
			=
			o_R(1).
		\end{equation*}
		The coercivity estimate gives
		\begin{equation*}
			Q_{\lambda_n,\G_{L_n}}(\psi_Rz_n)
			\le
			C_I\norm{P_{\G_{L_n}}^+(\psi_Rz_n)}_{\G_{L_n}}^2
			=
			o_R(1).
		\end{equation*}
		By \eqref{eq3.11},
		\[
		Q_{\lambda_n,\G_{L_n}}(z_n)
		=
		Q_{\lambda_n,\G_{L_n}}(\chi_Rz_n)
		+
		Q_{\lambda_n,\G_{L_n}}(\psi_Rz_n).
		\]
		Since \(0\le\chi_R\le1\) and \(F\) is nondecreasing,
		\[
		F(\abs{z_n})-F(\abs{\chi_Rz_n})\ge0.
		\]
		Consequently,
		\[
		\begin{split}
			J_{\lambda_n,\G_{L_n}}(z_n)
			-
			J_{\lambda_n,\G_{L_n}}(\chi_Rz_n)
			&=
			\frac12Q_{\lambda_n,\G_{L_n}}(\psi_Rz_n)\\
			&\quad-
			\int_{\G_{L_n}}
			\bigl(F(\abs{z_n})-F(\abs{\chi_Rz_n})\bigr)\,dx\\
			&\le o_R(1).
		\end{split}
		\]
		Hence
		\begin{equation}\label{eq3.22}
			J_{\lambda_n,\G_{L_n}}(z_n)
			\le
			J_{\lambda_n,\G_{L_n}}(\chi_Rz_n)+o_R(1).
		\end{equation}
		
		We prove the limsup inequality.  Fix \(R>0\).  Identify \(\chi_Rz_n\) with its half-line representative
		\[
		u_{R,n}:=R_{L_n}(\chi_Rz_n).
		\]
		The sequence \(u_{R,n}\) is bounded in \(\Y_{\R_+}\), supported in \([0,2R]\), and, after passing to a subsequence,
		\[
		u_{R,n}\rightharpoonup z_R\quad\text{in }\Y_{\R_+},
		\qquad
		u_{R,n}\to z_R\quad\text{in }L^q([0,2R])
		\]
		for every \(2\le q<\infty\).
		
		Since
		\[
		P_{\G_{L_n}}^+(\chi_Rz_n)
		=
		\chi_Rt_nw_n+[P_{\G_{L_n}}^+,\chi_R]z_n,
		\]
		\eqref{eq3.16} and the commutator estimate imply
		\[
		R_{L_n}P_{\G_{L_n}}^+(\chi_Rz_n)
		=
		\chi_Rt_n\omega_n+O(R^{-1})+o_n(1)
		\quad\text{in }\Y_{\R_+}.
		\]
		On the other hand, by the supported estimate \eqref{eq3.6},
		\[
		R_{L_n}P_{\G_{L_n}}^+(\chi_Rz_n)
		=
		P_+^+u_{R,n}+o_n(1)
		\quad\text{in }\Y_{\R_+}.
		\]
		Passing to a subsequence with \(t_n\to t\ge0\), we obtain
		\[
		\norm{P_+^+z_R-t\omega_*}_{\Y_{\R_+}}
		\le
		\norm{(1-\chi_R)t\omega_*}_{\Y_{\R_+}}+O(R^{-1})
		=
		o_R(1).
		\]
		Define
		\[
		\widehat z_R:=t\omega_*+P_+^-z_R
		\in\widehat\Y_{\R_+}(\omega_*).
		\]
		Then
		\begin{equation}\label{eq3.23}
			\norm{z_R-\widehat z_R}_{\Y_{\R_+}}=o_R(1).
		\end{equation}
		
		By \eqref{eq3.5},
		\[
		Q_{\lambda_n,\G_{L_n}}(\chi_Rz_n)
		=
		Q_{\lambda_n,+}(u_{R,n}).
		\]
		For fixed \(R\), set \(q_R=\chi_Rt\omega_*\).  The preceding two projection estimates give
		\[
		\limsup_{n\to\infty}
		\norm{P_+^+u_{R,n}-q_R}_{\Y_{\R_+}}
		\le \frac{C}{R},
		\qquad
		\norm{P_+^+z_R-q_R}_{\Y_{\R_+}}
		\le \frac{C}{R}.
		\]
		Since these vectors are uniformly bounded,
		\[
		\limsup_{n\to\infty}
		\norm{P_+^+u_{R,n}}_{\Y_{\R_+}}^2
		\le
		\norm{P_+^+z_R}_{\Y_{\R_+}}^2+o_R(1).
		\]
		The norm of the negative part is weakly lower semicontinuous (and hence its negative quadratic contribution is weakly upper semicontinuous), while the \(L^2\)-term converges strongly on \([0,2R]\).  Hence
		\[
		\limsup_{n\to\infty}
		Q_{\lambda_n,\G_{L_n}}(\chi_Rz_n)
		\le
		Q_{\lambda_*,+}(z_R)+o_R(1).
		\]
		The nonlinear term converges strongly on \([0,2R]\), because \(u_{R,n}\to z_R\) in \(L^q([0,2R])\) for every finite \(q\), and \(F\) has polynomial growth.  Therefore
		\[
		\limsup_{n\to\infty}
		J_{\lambda_n,\G_{L_n}}(\chi_Rz_n)
		\le
		J_{\lambda_*,\R_+}(z_R)+o_R(1).
		\]
		By \eqref{eq3.23} and continuity of \(J_{\lambda_*,\R_+}\) on bounded subsets of \(\Y_{\R_+}\),
		\[
		J_{\lambda_*,\R_+}(z_R)
		\le
		J_{\lambda_*,\R_+}(\widehat z_R)+o_R(1)
		\le
		\Gamma_{\lambda_*,\R_+}(\omega_*)+o_R(1).
		\]
		Together with \eqref{eq3.22}, this yields
		\[
		\limsup_{n\to\infty}
		\Gamma_{\lambda_n,\G_{L_n}}(w_n)
		\le
		\Gamma_{\lambda_*,\R_+}(\omega_*)+o_R(1).
		\]
		Letting \(R\to\infty\), we get
		\begin{equation}\label{eq3.24}
			\limsup_{n\to\infty}
			\Gamma_{\lambda_n,\G_{L_n}}(w_n)
			\le
			\Gamma_{\lambda_*,\R_+}(\omega_*).
		\end{equation}
		
		For the reverse inequality, observe again that
		\[
		v_{\lambda_n,L_n}
		\in\widehat\Y_{\G_{L_n}}(w_n).
		\]
		Therefore,
		\[
		\Gamma_{\lambda_n,\G_{L_n}}(w_n)
		\ge
		J_{\lambda_n,\G_{L_n}}(v_{\lambda_n,L_n}).
		\]
		By \eqref{eq3.12}, the locality of the action, and the fibre-maximizing assumption,
		\begin{equation}\label{eq3.25}
			\begin{split}
				\liminf_{n\to\infty}
				\Gamma_{\lambda_n,\G_{L_n}}(w_n)
				&\ge
				\lim_{n\to\infty}
				J_{\lambda_n,\R_+}(v_{\lambda_n})\\
				&=
				\lim_{n\to\infty}
				\Gamma_{\lambda_n,\R_+}(\omega_n)\\
				&=
				\Gamma_{\lambda_*,\R_+}(\omega_*).
			\end{split}
		\end{equation}
		
		Combining \eqref{eq3.24}, \eqref{eq3.25}, and \eqref{eq3.18}, we get \eqref{eq3.17}.  The sequential criterion proves the uniform convergence \eqref{eq3.13}.
	\end{proof}
	
	\begin{lemma}\label{Lem3.9}
		There exists a modulus \(\eta_I(L)\to0\) as \(L\to\infty\), depending only on \(I,m,c,f\), with the following property for all sufficiently large \(L\). Let \(\G\) be any generalized star graph carrying a distinguished pendant path \(\Pp\simeq[0,L]\), and let \(W_{\lambda,L}\) be the profile from \cref{Lem3.3} placed on \(\Pp\). Then
		\[
		P_\G^+W_{\lambda,L}\ne0
		\qquad(\lambda\in I),
		\]
		and
		\begin{equation*}
			\sup_{\lambda\in I}
			\left|
			\Gamma_{\lambda,\G}(P_\G^+W_{\lambda,L})-\frac12d_\lambda^\infty
			\right|
			\le\eta_I(L).
		\end{equation*}
		The modulus \(\eta_I\) is independent of the compact core, the number and lengths of the remaining edges, the vertex degrees, and the topology away from \(\Pp\).
	\end{lemma}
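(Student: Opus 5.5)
The plan is to apply \cref{Lem3.8} with the choices \(v_{\lambda,L}:=W_{\lambda,L}|_{\Pp}=\chi_LU_{\lambda,+}\) and \(v_\lambda:=U_{\lambda,+}\). Once its hypotheses are verified, \eqref{eq3.13} combined with \(\Gamma_{\lambda,\R_+}(P_+^+U_{\lambda,+})=J_{\lambda,\R_+}(U_{\lambda,+})=\tfrac12 d_\lambda^\infty\) gives the assertion. We then define
\[
\eta_I(L):=\sup_{\G}\sup_{\lambda\in I}\bigl|\Gamma_{\lambda,\G}(P_\G^+W_{\lambda,L})-\tfrac12d_\lambda^\infty\bigr|,
\]
where the supremum runs over all generalized star graphs carrying a pendant path of length \(L\). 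This quantity tends to zero by the uniformity statement in \cref{Lem3.8}. Since the construction of \(W_{\lambda,L}\) involves only \(U_\lambda\) and \(\chi_L\), the modulus depends only on \(I,m,c,f\) (and the fixed cutoff).

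The hypotheses are checked in order. First, \(v_{\lambda,L}\in H^1([0,L])\) vanishes near \(L\) and satisfies \(u_2(0)=0\), by \cref{Lem3.3}. Second, \eqref{eq3.12} is exactly \eqref{eq3.2}. Third, the family \(\{U_{\lambda,+}\}\) is compact in \(H^1(\R_+)\), because restriction is continuous and \(\{U_\lambda\}\) is compact in \(H^1(\R)\) by \cref{Prop3.1}. Fourth, \(P_+^+U_{\lambda,+}\neq0\) follows from \cref{Lem3.2}, which shows \(U_{\lambda,+}\in\N_{\lambda,\R_+}\), in particular \(U_{\lambda,+}\notin\Y^-_{\R_+}\). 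Finally, the fibre-maximizer property comes from the converse half of the proof of \cref{Prop2.15}, which holds verbatim on \(\R_+\): every element of the Nehari set is a global maximizer on its fibre \(\widehat\Y_{\R_+}(u^+)\). Hence \(J_{\lambda,\R_+}(U_{\lambda,+})=\Gamma_{\lambda,\R_+}(P_+^+U_{\lambda,+})\).

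It remains to show \(P_\G^+W_{\lambda,L}\neq0\) uniformly for large \(L\). By \eqref{eq3.14} (i.e.\ \cref{Lem3.5} applied to the compact family), \(\|P_\G^+W_{\lambda,L}-E_LP_+^+U_{\lambda,+}\|_{\Y_\G}\to0\) uniformly in \(\lambda\) and \(\G\). We also need \(\|E_Lw\|_{\Y_\G}\) bounded below: \(R_LE_Lw\to w\) uniformly on compacts by \eqref{eq3.3}, and \(R_L\) is uniformly bounded, so \(\|E_Lw_\lambda\|_{\Y_\G}\ge c\,\|w_\lambda\|\ge c\delta\) by \eqref{eq3.15} for large \(L\). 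Hence \(P_\G^+W_{\lambda,L}\ne0\) once \(L\) exceeds a threshold depending only on \(I,m,c,f\).

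The main obstacle is the formal uniformity over graphs: \cref{Lem3.8} is stated for a family \(\G_L\). I would argue by contradiction. If \(\eta_I(L_n)\ge\epsilon\) along some \(L_n\to\infty\), choose for each \(n\) a graph \(\G_{L_n}\) and a \(\lambda_n\) that nearly attain the supremum, and apply \cref{Lem3.8} to this sequence of graphs. This yields a contradiction, and since the lemma's constants are graph-independent, no further issue arises.
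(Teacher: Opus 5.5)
Your proposal is correct and follows the paper's proof. You apply \cref{Lem3.8} with \(v_{\lambda,L}=W_{\lambda,L}\) and \(v_\lambda=U_{\lambda,+}\), check its hypotheses via \eqref{eq3.2}, \cref{Prop3.1}, \cref{Lem3.2} and the fibre-maximality part of \cref{Prop2.15}, and identify the limit level as \(\tfrac12 d_\lambda^\infty\). Your additional arguments for \(P_\G^+W_{\lambda,L}\neq0\) and for graph-uniformity (via a sequential contradiction) only make explicit what the paper leaves inside the proof of \cref{Lem3.8}.
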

	
	\begin{proof}
		Apply \cref{Lem3.8} with
		\[
		v_{\lambda,L}=W_{\lambda,L},
		\qquad
		v_\lambda=U_{\lambda,+}.
		\]
		The convergence hypothesis follows from \eqref{eq3.2}, and the compactness of \(\{U_{\lambda,+}:\lambda\in I\}\) follows from \cref{Prop3.1}.  Since \(U_{\lambda,+}\in\N_{\lambda,\R_+}\), the additional fibre-maximizing hypothesis in \cref{Lem3.8} holds, and the fibre minimax formula gives
		\[
		\Gamma_{\lambda,\R_+}\big(P_+^+U_{\lambda,+}\big)
		=
		J_{\lambda,\R_+}(U_{\lambda,+})
		=
		\frac12d_\lambda^\infty
		\]
		by \cref{Lem3.2}.  This proves the claim.
	\end{proof}
	
	\begin{corollary}\label{Cor3.10}
		There exists \(L_I>0\) such that, if a generalized star graph contains a pendant path of length \(L\ge L_I\), then for every \(\lambda\in I\) there is \(w_{\lambda,L}\in\Y_\G^+\setminus\{0\}\) satisfying
		\[
		\Gamma_{\lambda,\G}(w_{\lambda,L})<d_\lambda^\infty .
		\]
		More precisely, one may choose \(L_I\) so that
		\begin{equation*}
			\Gamma_{\lambda,\G}(w_{\lambda,L})
			\le d_\lambda^\infty-\frac14d_I^*
			\qquad(\lambda\in I,\ L\ge L_I).
		\end{equation*}
	\end{corollary}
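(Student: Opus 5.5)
The plan is to take \(w_{\lambda,L}:=P_\G^+W_{\lambda,L}\), where \(W_{\lambda,L}\) is the truncated half-profile of \cref{Lem3.3} placed on the distinguished pendant path \(\Pp\simeq[0,L]\), and then read off the bound directly from \cref{Lem3.9}. That lemma supplies two things. First, for all sufficiently large \(L\), say \(L\ge L_0\), we have \(P_\G^+W_{\lambda,L}\ne0\) for every \(\lambda\in I\). Second, there is a graph-independent modulus \(\eta_I(L)\to0\) with
\[
\Gamma_{\lambda,\G}(w_{\lambda,L})\le \frac12 d_\lambda^\infty+\eta_I(L)\qquad(\lambda\in I).
\]
The first fact gives \(w_{\lambda,L}\in\Y_\G^+\setminus\{0\}\), which is what makes \(\Gamma_{\lambda,\G}(w_{\lambda,L})\) well defined.

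Next I would rewrite the right-hand side as
\[
\frac12d_\lambda^\infty+\eta_I(L)=d_\lambda^\infty-\frac12d_\lambda^\infty+\eta_I(L)\le d_\lambda^\infty-\frac12d_I^*+\eta_I(L),
\]
using \(d_\lambda^\infty\ge d_I^*>0\) from \cref{Prop3.1}. I then choose \(L_I\ge L_0\) so that \(\eta_I(L)\le\frac14 d_I^*\) for all \(L\ge L_I\). Strictly, \(\eta_I\) is only a modulus tending to zero, not necessarily monotone. To handle this I would either replace it by \(\sup_{L'\ge L}\eta_I(L')\), or note that the sequential proof of \cref{Lem3.8} gives \(\sup_{L'\ge L}\) control directly. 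With this choice, \(\Gamma_{\lambda,\G}(w_{\lambda,L})\le d_\lambda^\infty-\frac14d_I^*<d_\lambda^\infty\).

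The only point needing care is that \(L_I\) must not depend on \(\G\). This is guaranteed because \cref{Lem3.9} asserts that \(\eta_I\) and the threshold for \(P_\G^+W_{\lambda,L}\ne0\) depend only on \(I,m,c,f\), uniformly over graphs and pendant paths. With that in hand, there is no real obstacle; the corollary is bookkeeping on top of \cref{Lem3.9}.
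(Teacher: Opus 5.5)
Your proposal is correct and matches the paper's proof: take \(w_{\lambda,L}=P_\G^+W_{\lambda,L}\), invoke \cref{Lem3.9} for \(w_{\lambda,L}\ne0\) and for the bound \(\Gamma_{\lambda,\G}(w_{\lambda,L})\le\frac12d_\lambda^\infty+\eta_I(L)\), choose \(L_I\) so that \(\eta_I(L)\le\frac14d_I^*\) for \(L\ge L_I\), and use \(d_\lambda^\infty\ge d_I^*\). Your concern about monotonicity of \(\eta_I\) is unnecessary, because \(\eta_I(L)\to0\) as \(L\to\infty\) already means that \(\eta_I(L)\le\frac14d_I^*\) for every \(L\) beyond some threshold.
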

	
	\begin{proof}
		By \cref{Lem3.9}, choose \(L_I\ge1\) so large that the estimate in that lemma holds and
		\[
		\eta_I(L)\le\frac14d_I^*
		\qquad\text{for every }L\ge L_I .
		\]
		Set
		\[
		w_{\lambda,L}:=P_\G^+W_{\lambda,L}.
		\]
		For such \(L\), this vector is nonzero by \cref{Lem3.9}.  Since \(d_\lambda^\infty\ge d_I^*\),
		\[
		\Gamma_{\lambda,\G}(w_{\lambda,L})
		\le
		\frac12d_\lambda^\infty+\frac14d_I^*
		\le
		d_\lambda^\infty-\frac14d_I^* .
		\]
		This proves the claim.
	\end{proof}

	\section{Proofs of the main statements}
	
	\subsection{The strict gap theorem}
	
	\begin{proof}[Proof of \cref{Thm1.1}]
		Let \(L_I\) be the length given by \cref{Cor3.10}.  Assume that the essential reduction \(\G_{\ess}\) contains a pendant path \(\Pp_{\ess}\) of length \(L\ge L_I\).  By undoing the suppression of transmissive degree-two vertices along \(\Pp_{\ess}\), we obtain in the original graph \(\G\) a terminal metric interval \(\Pp\) of the same length \(L\).  It is a finite concatenation of edge segments separated only by transmissive degree-two vertices, and its endpoint corresponding to \(t=L\) may be either a vertex or an interior point of a bounded or external edge.  We identify \(\Pp\) with \([0,L]\), with \(0\) at the terminal endpoint, using the orientation convention in \eqref{eq2.1} whenever an edge has to be reoriented.
		
		Under this identification, an \(H^1\)-spinor of the concatenated coordinate satisfies the original Dirac-Kirchhoff conditions at every suppressed degree-two vertex: the first component is continuous, and the signed lower-component flux condition becomes continuity of the lower component in the concatenated coordinate.  All local profiles and cutoffs which are extended by zero in the pendant construction vanish in a neighborhood of \(L\); their zero extensions are therefore admissible also when \(L\) lies in the interior of an edge.  Thus the pendant construction in \cref{Lem3.3,Lem3.4} and the fibre estimate in \cref{Cor3.10} apply in every case allowed by \cref{Def2.3}.
		
		Hence, for every \(\lambda\in I\), there exists \(w_{\lambda,L}\in\Y_\G^+\setminus\{0\}\) such that
		\[
		\Gamma_{\lambda,\G}(w_{\lambda,L})
		\le
		d_\lambda^\infty-\frac14d_I^*,
		\qquad
		d_I^*=\inf_{\mu\in I}d_\mu^\infty>0 .
		\]
		By the fibre minimax formula in \cref{Prop2.15},
		\[
		d_\lambda(\G)
		\le
		\Gamma_{\lambda,\G}(w_{\lambda,L})
		\le
		d_\lambda^\infty-\frac14d_I^* .
		\]
		Therefore
		\[
		d_\lambda(\G)<d_\lambda^\infty
		\qquad(\lambda\in I),
		\]
		and the uniform estimate \eqref{eq1.2} follows with
		\[
		\rho_I=\frac14d_I^* .
		\]
		This proves \cref{Thm1.1}.
	\end{proof}
	
	\subsection{The rigidity results}
	
	\begin{proof}[Proof of \cref{Thm1.2}]
		Assume that \(\G_{\ess}\) is isometric to \(\R\).  Then \(\G\) is obtained from a metric realization of the real line by inserting only transmissive degree-two vertices.  Choose an isometric coordinate \(x\in\R\) on \(\G_{\ess}\).  Each edge of \(\G\) is then identified with an interval in this coordinate.
		
		If the original orientation of an edge agrees with the \(x\)-coordinate, we use the identity map on that edge.  If the original edge coordinate is opposite to the \(x\)-coordinate and has length \(\ell\), we use the unitary orientation reversal
		\[
		(u^1(x),u^2(x))
		\longmapsto
		(u^1(\ell-x),-u^2(\ell-x)).
		\]
		Equivalently, writing \(S=\operatorname{diag}(1,-1)\), this transformation has the form \(Tu(x)=S\,u(\ell-x)\).  Since \(S\sigma_1=-\sigma_1S\) and \(S\sigma_3=\sigma_3S\), a direct computation gives
		\[
		\left(-ic\sigma_1\frac{d}{dx}+mc^2\sigma_3\right)Tu
		=
		T\left(-ic\sigma_1\frac{d}{dx}+mc^2\sigma_3\right)u .
		\]
		Thus the Dirac expression is preserved in the concatenated coordinate.
		
		At a transmissive degree-two vertex, the Dirac-Kirchhoff condition says that the first component is continuous and that the signed lower-component flux is continuous.  After the above orientation convention, this is exactly the continuity of both components in the concatenated coordinate.  Therefore the local transformations on all edges combine to a unitary operator
		\[
		\mathcal U:L^2(\G,\C^2)\longrightarrow L^2(\R,\C^2)
		\]
		such that
		\[
		\mathcal U\dom(\D_\G)=\dom(\D_\R),
		\qquad
		\mathcal U\D_\G\mathcal U^{-1}=\D_\R .
		\]
		By functional calculus,
		\[
		\mathcal U\Y_\G^\pm=\Y_\R^\pm,
		\qquad
		\norm{\mathcal Uu}_{\R}=\norm{u}_{\G}.
		\]
		Moreover, \(\mathcal U\) preserves the \(L^2\)-norm and the pointwise modulus \(\abs{u}\).  Hence
		\[
		J_{\lambda,\R}(\mathcal Uu)=J_{\lambda,\G}(u),
		\qquad
		u\in\Y_\G .
		\]
		It follows that
		\[
		\mathcal U\N_{\lambda,\G}=\N_{\lambda,\R}.
		\]
		Taking infima over the two Nehari manifolds gives
		\[
		d_\lambda(\G)=d_{\lambda,\R}=d_\lambda^\infty .
		\]
		This proves \cref{Thm1.2}.
	\end{proof}
	
	\begin{proof}[Proof of \cref{Prop1.3}]
		Assume that \(\G_{\ess}\) is isometric to \(\R_+\).  Choose the coordinate \(x\in[0,\infty)\) on \(\G_{\ess}\) so that \(x=0\) is the terminal endpoint. As in the proof of \cref{Thm1.2}, concatenate all transmissive degree-two chains and use the orientation reversal \eqref{eq2.1} on every edge whose original coordinate is opposite to the half-line coordinate.
		
		This gives a unitary operator
		\[
		\mathcal U:L^2(\G,\C^2)\longrightarrow L^2(\R_+,\C^2)
		\]
		which intertwines the Dirac expressions.  At every suppressed degree-two vertex, the Dirac-Kirchhoff condition becomes continuity of the spinor in the concatenated coordinate.  At the terminal endpoint, the degree-one Dirac-Kirchhoff condition is precisely
		\[
		u_2(0)=0,
		\]
		which is the half-line endpoint condition used in the definition of \(d_\lambda^+\).  Hence
		\[
		\mathcal U\dom(\D_\G)=\dom(\D_+),
		\qquad
		\mathcal U\D_\G\mathcal U^{-1}=\D_+ .
		\]
		Functional calculus gives
		\[
		\mathcal U\Y_\G^\pm=\Y_{\R_+}^\pm .
		\]
		Since \(\mathcal U\) also preserves the quadratic form, the \(L^2\)-term, and the nonlinear integral, it identifies the action functionals:
		\[
		J_{\lambda,\R_+}(\mathcal Uu)=J_{\lambda,\G}(u),
		\qquad
		u\in\Y_\G .
		\]
		Consequently,
		\[
		\mathcal U\N_{\lambda,\G}=\N_{\lambda,\R_+}.
		\]
		Taking the infimum of the action over the two corresponding generalized Nehari manifolds yields
		\[
		d_\lambda(\G)=d_\lambda^+ .
		\]
		This proves \cref{Prop1.3}.
	\end{proof}
	
	\appendix
	
	\section{Full-line ground states}\label{AppA}
	
	The purpose of this appendix is to justify the one-dimensional result used in \cref{Prop3.1}.  We follow the autonomous variational method of Ding, Guo and Xu \cite{DingGuoXu2021}, verifying its application to \(H^{1/2}(\R,\C^2)\) under \textnormal{(F1)--(F4)}.  We then prove the normalization, compactness, decay and continuity properties needed in the main text.
	
	We write the full-line equation as
	\begin{equation}\label{eqA.1}
		(\D_\R+\lambda)U=f(|U|)U,
		\qquad
		U=(U_1,U_2)^T .
	\end{equation}
	Equivalently,
	\begin{equation}\label{eqA.2}
		\begin{split}
			-ic\,U_2'+(mc^2+\lambda)U_1&=f(|U|)U_1,\\
			-ic\,U_1'+(\lambda-mc^2)U_2&=f(|U|)U_2 .
		\end{split}
	\end{equation}
	
	\begin{theorem}\label{ThmA.1}
		Let \(\lambda\in(-mc^2,mc^2)\).  Under \textnormal{(F1)--(F4)}, the full-line functional \(J_{\lambda,\R}\) possesses a least-energy critical point \(U_\lambda\in\N_{\lambda,\R}\), namely
		\[
		J'_{\lambda,\R}(U_\lambda)=0,
		\qquad
		J_{\lambda,\R}(U_\lambda)=d_\lambda^\infty .
		\]
		Moreover,
		\[
		U_\lambda\in H^1(\R,\C^2)\cap C^1(\R,\C^2),
		\]
		\(U_\lambda\) solves \eqref{eqA.1} classically, and
		\[
		U_\lambda(x)\to0
		\qquad\text{as } |x|\to\infty .
		\]
	\end{theorem}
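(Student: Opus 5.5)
The plan is the standard Szulkin--Weth / Ding route for the autonomous full-line problem, in four steps.

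\textbf{Step 1: Cerami sequence at level \(d_\lambda^\infty\).} By \cref{Prop2.15} on \(\R\) we have \(d_\lambda^\infty=\inf_{w\in\Y_\R^+\setminus\{0\}}\Gamma_{\lambda,\R}(w)\). The fibre maximizer \(\widehat m(w)\) is unique: the pointwise inequality in that proof is strict under (F3)--(F4) whenever \(tu+v\neq u\). This gives a well-defined map \(\widehat m\), continuous on the unit sphere \(S^+\) of \(\Y_\R^+\). The reduced functional \(w\mapsto J_{\lambda,\R}(\widehat m(w))\) on \(S^+\) is \(C^1\), and by Szulkin--Weth \cite{SzulkinWeth2010} its Palais--Smale sequences correspond to Palais--Smale sequences of \(J_{\lambda,\R}\) at the same level. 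Ekeland's principle then yields \((u_n)\subset\N_{\lambda,\R}\) with \(J_{\lambda,\R}(u_n)\to d_\lambda^\infty\) and \(J'_{\lambda,\R}(u_n)\to0\). Note that \(d_\lambda^\infty>0\): small multiples \(sw\) give \(J>0\), and coercivity \eqref{eq2.7} together with \eqref{eq2.2} bounds \(\Gamma\) below uniformly on \(S^+\).

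\textbf{Step 2: Boundedness and nonvanishing.} Boundedness of \((u_n)\) comes from (F3):
\[
J_{\lambda,\R}(u_n)-\tfrac12J'_{\lambda,\R}(u_n)[u_n]=\int H(|u_n|)\ge(\tfrac12-\tfrac1\theta)\int f(|u_n|)|u_n|^2 .
\]
Test \(J'_{\lambda,\R}(u_n)\) with \(u_n^+-u_n^-\) and use \eqref{eq2.7}. The nonlinear term is then controlled by Hölder's inequality, interpolating between \(\int f(|u|)|u|^2\) and the \(L^p\) norm via (F2); a standard contradiction argument on \(u_n/\norm{u_n}\) handles the superlinear part. This boundedness step is where I expect the main technical obstacle: (F2) allows \(p\) arbitrarily large, so one cannot rely on a fixed Sobolev exponent, and the normalized-sequence argument must be run carefully, as in Ding--Guo--Xu.

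Next, Lions' lemma on \(H^{1/2}(\R)\) excludes vanishing. If \(\sup_y\int_{y-1}^{y+1}|u_n|^2\to0\), then \(u_n\to0\) in \(L^q\) for \(2<q<\infty\). Hence \(\int f(|u_n|)|u_n|^2\to0\), which forces \(\norm{u_n^\pm}\to0\) by testing with \(u_n^\pm\) and using \eqref{eq2.7}. This contradicts \(d_\lambda^\infty>0\).

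\textbf{Step 3: Compactness and identification of the level.} Translate so that a fixed mass stays in \([-1,1]\); translation invariance preserves both the level and the Palais--Smale property. A weak limit \(U\ne0\) is a critical point. Since \(J-\tfrac12J'[\cdot]=\int H(|\cdot|)\) with \(H\ge0\), Fatou gives \(J_{\lambda,\R}(U)\le d_\lambda^\infty\). Moreover \(U\in\N_{\lambda,\R}\), by the argument of \cref{Lem3.2}, so equality holds.

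\textbf{Step 4: Regularity and decay.} Since \(f(|U|)U\in L^2\) by (F2) and \(U\in L^q\) for all finite \(q\), we get \(U=(\D_\R+\lambda)^{-1}(f(|U|)U)\in H^1\subset C^0\). The system \eqref{eqA.2} then gives \(U'\) continuous, so \(U\in C^1\) and \(U\) is a classical solution. Finally, \(H^1(\R)\) functions tend to zero at infinity.
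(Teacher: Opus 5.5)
Your Steps 2--4 essentially match the paper. These are the AR bound on $\int f(|u_n|)|u_n|^2$, testing with $u_n^+-u_n^-$, exclusion of vanishing, translation, weak continuity of $\Psi'$, Fatou applied to $\int H$, and the bootstrap to $H^1\cap C^1$. The gap is in Step 1.

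The Szulkin--Weth reduction requires the fibre maximizer $\widehat m(w)$ to be unique. You justify uniqueness by asserting that the pointwise inequality in the proof of \cref{Prop2.15} is strict whenever $tu+v\neq u$. That is false under (F4), which only asks $f$ to be nondecreasing. For $v=0$ the pointwise quantity is $g_t(r)=F(tr)-F(r)+\frac{1-t^2}{2}f(r)r^2$, and it vanishes whenever $f$ is constant between $tr$ and $r$. Such plateaus are compatible with (F1)--(F4). For example, take $f=\phi^2$ with $\phi\in C^1$ nondecreasing, $\phi(t)=t$ on $[0,1]$, $0\le\phi'\le1$, $\phi'=0$ on a subinterval of $(1,2)$, and $t-\phi(t)\le\frac1{10}$; this satisfies (F2) with $p=4$ and (F3) with $\theta=3$.

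For $v\neq0$, strictness does survive after integration, because of the term $-\frac12B_\lambda(v,v)>0$. For $v=0$, however, you must rule out $u,tu\in\N_{\lambda,\R}$ with $t\neq1$. This amounts to showing that $f(t|u|)=f(|u|)$ a.e.\ on $\{u\neq0\}$ is impossible for $u\in\Y_\R\setminus\{0\}$. That is true, but it needs an idea absent from your proposal. First, $|u|\in H^{1/2}(\R)\subset\mathrm{VMO}$ cannot skip values, so every interval $(\alpha,\beta)$ with $0<\alpha<\beta<\operatorname{ess\,sup}|u|$ has preimage of positive measure. Second, the open set $\{r>0: f(tr)\neq f(r)\}$ accumulates at $0$, since otherwise $f$ would be constant near $0$, contradicting $f(0)=0<f(r)$. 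Without uniqueness, $\widehat m$ is not a well-defined continuous map, and the reduced functional on $S^+$ is not known to be $C^1$. The Ekeland/Palais--Smale transfer you invoke is then unavailable.

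The paper avoids uniqueness altogether. It picks $w_n\in E^+$ with $\max_{\widehat E(w_n)}J\le d+\frac1n$. It then applies an abstract Cerami linking theorem on the truncated cone over $w_n$, where $J\le0$ on the relative boundary and $J\ge b>0$ on a small sphere in $E^+$. Diagonalizing gives a Cerami sequence at some level $\ell\in[b,d]$, and Fatou then gives $d\le J(U)\le\ell\le d$. This uses only existence of fibre maxima and fibre anti-coercivity, which is why the paper stresses that no uniqueness of fibre maximizers is needed. You should either add the range argument above to Step 1, or replace Step 1 by such a linking argument.

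A minor point: the difficulty you anticipate for large $p$ does not arise in one dimension, since $\Y_\R\hookrightarrow L^q$ for every $2\le q<\infty$. The paper simply uses $(f(t)t)^{p'}\le C_\delta f(t)t^2$ for $t\ge\delta$ together with H\"older. For a sequence on $\N_{\lambda,\R}$ you even have $J'(u_n)[u_n]=0$ exactly.
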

	
	\begin{proof}
		We adapt the autonomous variational argument of Ding, Guo and Xu \cite[Section 3.1]{DingGuoXu2021}, verifying the required properties under \textnormal{(F1)--(F4)}.  Fix \(\lambda\in(-mc^2,mc^2)\), and write
		\[
		E=\Y_\R=H^{1/2}(\R,\C^2)=E^+\oplus E^-,
		\qquad J=J_{\lambda,\R},\qquad d=d_\lambda^\infty.
		\]
		Indeed, the Fourier symbol of \(|\D_\R|\) is \((c^2\xi^2+m^2c^4)^{1/2}I_2\).  On \(E\) we use the equivalent norm
		\[
		\norm{u}_\lambda^2
		=Q_{\lambda,\R}(u^+)-Q_{\lambda,\R}(u^-).
		\]
		Then
		\[
		J(u)=\frac12\bigl(\norm{u^+}_\lambda^2-
		\norm{u^-}_\lambda^2\bigr)-\Psi(u),
		\qquad \Psi(u)=\int_\R F(|u|)\,\dd x.
		\]
		The embeddings \(E\hookrightarrow L^q(\R,\C^2)\) hold for every \(2\le q<\infty\), and restriction to a bounded interval is compact into each such \(L^q\) space.  Consequently \(\Psi\) is of class \(C^1\). It is nonnegative and weakly lower semicontinuous by convexity.  Moreover, if \(u_n\rightharpoonup u\) in \(E\), local strong convergence and \eqref{eq2.2} imply \(\Psi'(u_n)[\varphi]\to\Psi'(u)[\varphi]\) for every \(\varphi\in C_c^\infty(\R,\C^2)\).  The derivatives are bounded in \(E^*\), so density gives the same convergence for every \(\varphi\in E\).
		
		By \eqref{eq2.2}, there exist \(r,b>0\) such that
		\[
		J(w)\ge b\qquad
		(w\in E^+,\ \norm{w}_\lambda=r).
		\]
		We use the abstract Cerami linking theorem in \cite{Tang2015}, regarding \(E\) as a real Hilbert space.  Its assumptions on the nonlinear part are exactly the properties of \(\Psi\) verified above. The proof of \cref{Prop2.15} shows that \(J\) tends to \(-\infty\) on each fibre \(\widehat E(w)=\{tw+\eta:t\ge0,\ \eta\in E^-\}\) as the norm tends to infinity.  The fibre argument uses \textnormal{(F4)} with nondecreasing \(f\), and uses the lower bound \eqref{eq2.4} only at infinity.  Thus \(0<b\le d<\infty\).  Choose \(w_n\in E^+\) with \(\norm{w_n}_\lambda=1\) and
		\[
		\max_{\widehat E(w_n)}J\le d+\frac1n.
		\]
		For sufficiently large \(R_n>r\), the truncated cone
		\[
		Q_n=\{tw_n+\eta:t\ge0,\ \eta\in E^-,\,
		\norm{tw_n+\eta}_\lambda\le R_n\}
		\]
		satisfies \(\sup_{\partial Q_n}J\le0<b\), where the boundary is relative to \(E^-\oplus\R w_n\).  Indeed, \(J\le0\) on the base \(t=0\), and fibre anti-coercivity gives the same bound on the outer boundary.  The linking theorem applied to \(J\) therefore gives a Cerami sequence at some level \(\ell_n\in[b,d+1/n]\).  Selecting one sufficiently late term from each of these sequences, and passing to a subsequence, yields
		\[
		J(u_n)\to\ell\in[b,d],\qquad
		(1+\norm{u_n}_\lambda)\norm{J'(u_n)}_{E^*}\to0.
		\]
		No uniqueness of the maximizer on a fibre is needed.
		
		We prove that \((u_n)\) is bounded.  By \eqref{eq2.3},
		\[
		\begin{split}
			J(u_n)-\frac12J'(u_n)[u_n]
			&=\int_\R H(|u_n|)\,\dd x\\
			&\ge\left(\frac12-\frac1\theta\right)
			\int_\R f(|u_n|)|u_n|^2\,\dd x.
		\end{split}
		\]
		Hence \(A_n:=\int_\R f(|u_n|)|u_n|^2\,\dd x\) is bounded. Put \(p'=p/(p-1)\).  Given \(\eps>0\), choose \(\delta>0\) so that \(f(t)\le\eps\) for \(0\le t\le\delta\).  By \textnormal{(F2)},
		\[
		(f(t)t)^{p'}\le C_\delta f(t)t^2
		\qquad(t\ge\delta).
		\]
		Testing \(J'(u_n)\) with \(u_n^+-u_n^-\), and estimating separately on \(\{|u_n|<\delta\}\) and \(\{|u_n|\ge\delta\}\), gives
		\[
		\norm{u_n}_\lambda^2
		\le C\eps\norm{u_n}_\lambda^2
		+C_\delta A_n^{1/p'}\norm{u_n}_\lambda+o(1).
		\]
		Here we used H\"older's inequality and \(E\hookrightarrow L^p\). Choosing \(\eps\) sufficiently small proves boundedness.
		
		We next exclude vanishing.  Interpolation and the \(H^{1/2}\) embedding into \(L^4\) on unit intervals give
		\[
		\norm{z}_{L^3(\R)}^3
		\le C\left(\sup_{j\in\mathbb Z}
		\norm{z}_{L^2(j,j+1)}\right)\norm{z}_{H^{1/2}(\R)}^2.
		\]
		If the supremum on the right tends to zero for \(z=u_n\), then \(u_n\to0\) in \(L^3\), and interpolation with the bounded \(L^q\) norms gives convergence to zero in every \(L^q\), \(2<q<\infty\). Since \(f(t)t\le\eps t+C_\eps t^{p-1}\), testing once more with \(u_n^+-u_n^-\) yields
		\[
		\norm{u_n}_\lambda^2
		\le C\eps\norm{u_n}_\lambda^2
		+C_\eps\norm{u_n}_{L^p}^{p-1}\norm{u_n}_\lambda+o(1).
		\]
		It follows that \(u_n\to0\) in \(E\), contradicting \(\ell\ge b>0\). Thus, after passing to a subsequence, there are \(y_n\in\mathbb Z\) and \(a>0\) such that
		\[
		\int_{y_n}^{y_n+1}|u_n|^2\,\dd x\ge a.
		\]
		Translations commute with \(\D_\R\) and its spectral projections and preserve \(J\).  The translated sequence \(v_n=u_n(\,\cdot+y_n)\) therefore has the same Cerami properties. Passing to a further subsequence, we have
		\[
		v_n\rightharpoonup U\ \text{in }E,\qquad
		v_n\to U\ \text{in }L^q_{\rm loc}(\R,\C^2)
		\quad(2\le q<\infty),
		\]
		and almost everywhere.  Local \(L^2\) convergence gives \(U\ne0\). The weak continuity of \(\Psi'\) proved above implies \(J'(U)=0\).
		
		The positive component of \(U\) is nonzero.  Otherwise, testing the equation with \(U\) would give
		\[
		-\norm{U}_\lambda^2
		=\int_\R f(|U|)|U|^2\,\dd x\ge0,
		\]
		which is impossible.  Hence \(U\in\N_{\lambda,\R}\). Since \(H\ge0\), Fatou's lemma now gives
		\[
		\begin{split}
			d\le J(U)
			&=\int_\R H(|U|)\,\dd x\\
			&\le\liminf_{n\to\infty}\int_\R H(|v_n|)\,\dd x
			=\ell\le d.
		\end{split}
		\]
		Thus \(J(U)=d\), and every nonzero critical point has energy at least \(d\), since it belongs to \(\N_{\lambda,\R}\).
		
		Finally, \(U\in L^2\cap L^{2(p-1)}\) and \textnormal{(F2)} imply \(f(|U|)U\in L^2\).  Equation \eqref{eqA.1}, initially in distributions, gives \(U'\in L^2\), hence \(U\in H^1(\R,\C^2)\). The equation then has a continuous right-hand side, so \(U\in C^1\). The one-dimensional \(H^1\) embedding also gives \(U(x)\to0\) as \(|x|\to\infty\).  Taking \(U_\lambda=U\) completes the proof.
	\end{proof}
	
	\begin{lemma}\label{LemA.2}
		Let \(U\in H^1(\R,\C^2)\cap C^1(\R,\C^2)\) be a nontrivial solution of \eqref{eqA.1} such that \(U(x)\to0\) as \(|x|\to\infty\). Then, after multiplication by a constant complex phase, \(U\) can be written as
		\[
		U(x)=
		\begin{pmatrix}
			u(x)\\ i v(x)
		\end{pmatrix},
		\qquad
		u,v:\R\to\R .
		\]
		The functions \(u,v\) solve the real planar system
		\begin{equation}\label{eqA.3}
			\begin{split}
				c\,v'&=\bigl(f((u^2+v^2)^{1/2})-mc^2-\lambda\bigr)u,\\
				c\,u'&=\bigl(\lambda-mc^2-f((u^2+v^2)^{1/2})\bigr)v .
			\end{split}
		\end{equation}
	\end{lemma}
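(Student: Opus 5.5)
The plan is to show that the decay of \(U\) forces a Wronskian-type invariant to vanish, and then to use uniqueness for a real linear ODE to extract a single constant phase. First I rewrite \eqref{eqA.2} as a first-order linear system whose coefficients are real. Set
\[
g(x)=f(|U(x)|)-\lambda+mc^2,\qquad h(x)=f(|U(x)|)-mc^2-\lambda .
\]
These are real and continuous on \(\R\), since \(U\in C^1\) and \(f\in C^1\). Solving \eqref{eqA.2} for the derivatives gives
\[
U_1'=\tfrac{i}{c}\,g\,U_2,\qquad U_2'=\tfrac{i}{c}\,h\,U_1 .
\]
Next I introduce \(a=U_1\) and \(b=-iU_2\). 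A direct substitution gives
\[
c\,a'=-g\,b,\qquad c\,b'=h\,a .
\]
I then regard \(g\) and \(h\) as fixed given functions of \(x\), so that \((a,b)\) solves a \emph{linear} \(2\times2\) system with real continuous coefficients. Consequently \((\Ree a,\Ree b)\) and \((\operatorname{Im}a,\operatorname{Im}b)\) are two real solutions of the same real linear system.

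The key step is to show that the Wronskian of these two real solutions vanishes identically. This Wronskian is, up to sign, \(\operatorname{Im}(a\overline b)\). Differentiating,
\[
c\,\frac{d}{dx}(a\overline b)=-g|b|^2+h|a|^2 ,
\]
which is real. Hence \(\operatorname{Im}(a\overline b)\) is constant on \(\R\). Since \(U(x)\to0\) as \(|x|\to\infty\), this constant is \(0\). Equivalently, \(\Ree(U_1\overline{U_2})\equiv0\).

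Two solutions of a planar real linear system with identically vanishing Wronskian are linearly dependent over \(\R\). To see this, note that if the Wronskian vanishes at a point \(x_0\), the values there are dependent, and uniqueness for the linear Cauchy problem with continuous coefficients propagates the dependence to all of \(\R\). So there are real numbers \(\alpha,\beta\), not both zero, with \(\alpha(\Ree a,\Ree b)+\beta(\operatorname{Im}a,\operatorname{Im}b)\equiv0\). This allows me to write \((a,b)=e^{i\varphi}(u,v)\) with a constant \(\varphi\in\R\) and real functions \(u,v\). Concretely, if \(\beta\ne0\) I write \(\operatorname{Im}=-(\alpha/\beta)\Ree\), and I argue symmetrically if \(\beta=0\). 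Nontriviality of \(U\) ensures that \((u,v)\not\equiv0\).

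Finally I undo the substitution. Multiplying \(U\) by \(e^{-i\varphi}\) gives \(U=(u,iv)^T\) with \(u,v\) real. This phase change preserves \(|U|\), and therefore preserves \(g\), \(h\) and equation \eqref{eqA.1}. Inserting \(a=u\) and \(b=v\) into \(c\,a'=-g b\) and \(c\,b'=h a\) yields exactly
\[
c\,u'=(\lambda-mc^2-f)v,\qquad c\,v'=(f-mc^2-\lambda)u ,
\]
that is, \eqref{eqA.3}.

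The only delicate point is the use of the decay \(U\to0\) to kill the constant Wronskian. Without this decay, a nonzero constant value of \(\operatorname{Im}(a\overline b)\) would prevent a global constant phase reduction. Everything else is a routine computation.
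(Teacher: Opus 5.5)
Your proof is correct, but it extracts the constant phase by a different mechanism than the paper. The starting point is the same: your Wronskian $\operatorname{Im}(a\overline b)$ equals $\Ree(U_1\overline{U_2})$, which is half the Dirac current $j$ used in the paper. Both arguments show this quantity is constant and then kill it by the decay of $U$.

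The paper then shows that $U$ has no zeros, using uniqueness for the locally Lipschitz nonlinear system \eqref{eqA.2}. It writes $U=e^{i\vartheta(x)}(u,iv)^T$ locally and substitutes into \eqref{eqA.2} to get $\vartheta'u=\vartheta'v=0$, so $\vartheta$ is constant. You instead freeze $g,h$ as given real continuous coefficients. You note that $(\Ree a,\Ree b)$ and $(\operatorname{Im}a,\operatorname{Im}b)$ solve one and the same real linear system, and invoke the classical fact that a vanishing Wronskian forces $\R$-linear dependence. This yields a global constant phase in one stroke.

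Your route bypasses several steps the paper treats only briefly: the nonvanishing of $U$, the choice of a $C^1$ local phase, and the gluing of local constant phases (each determined only modulo $\pi$) into a global one. It also needs only continuity of $f$, since uniqueness is applied to a linear system rather than to the nonlinear one. The paper's route is more hands-on and records the nonvanishing of $U$ explicitly. Your freezing trick would give that too: $(a,b)$ itself solves the linear system, so it cannot vanish at a point unless it vanishes identically.
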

	
	\begin{proof}
		Define the Dirac current
		\[
		j(x):=2\Ree\bigl(U_1(x)\overline{U_2(x)}\bigr).
		\]
		From \eqref{eqA.2},
		\[
		U_2'
		=
		\frac{i}{c}\bigl(f(|U|)-mc^2-\lambda\bigr)U_1,
		\qquad
		U_1'
		=
		\frac{i}{c}\bigl(f(|U|)+mc^2-\lambda\bigr)U_2 .
		\]
		Hence
		\[
		\begin{split}
			j'(x)
			&=
			2\Ree\bigl(U_1'\overline{U_2}
			+U_1\overline{U_2'}\bigr)\\
			&=
			2\Ree\left[
			\frac{i}{c}\bigl(f(|U|)+mc^2-\lambda\bigr)|U_2|^2
			-
			\frac{i}{c}\bigl(f(|U|)-mc^2-\lambda\bigr)|U_1|^2
			\right]
			=
			0 .
		\end{split}
		\]
		Since \(U(x)\to0\) at both infinities, \(j\equiv0\).  Therefore
		\[
		\Ree\bigl(U_1\overline{U_2}\bigr)=0
		\qquad(x\in\R).
		\]
		
		We also note that \(U\) never vanishes.  Indeed, if \(U(x_0)=0\) for some \(x_0\), uniqueness for the Cauchy problem associated with the locally Lipschitz system \eqref{eqA.2} gives \(U\equiv0\), contrary to the assumption.
		
		Thus, locally on \(\R\), the relation \(\Ree(U_1\overline{U_2})=0\) allows us to write
		\[
		U=e^{i\vartheta(x)}(u,iv)^T,
		\qquad
		u,v\in\R .
		\]
		Substituting this expression into \eqref{eqA.2} and comparing real and imaginary parts gives
		\[
		\vartheta'(x)u(x)=0,
		\qquad
		\vartheta'(x)v(x)=0 .
		\]
		Since \((u(x),v(x))\ne(0,0)\) for every \(x\), we obtain \(\vartheta'\equiv0\).  Hence the phase is globally constant.  Multiplying by the inverse constant phase gives the desired representation.  Substituting \(U=(u,iv)^T\) into \eqref{eqA.2} gives \eqref{eqA.3}.
	\end{proof}
	
	\begin{lemma}\label{LemA.3}
		For every real solution \((u,v)\) of \eqref{eqA.3}, the quantity
		\[
		\mathcal H_\lambda(u,v)
		=
		\frac{mc^2+\lambda}{2}u^2
		-
		\frac{mc^2-\lambda}{2}v^2
		-
		F\bigl((u^2+v^2)^{1/2}\bigr)
		\]
		is constant along the orbit.  If the solution is homoclinic to \(0\), then
		\[
		\mathcal H_\lambda(u(x),v(x))=0
		\qquad(x\in\R).
		\]
	\end{lemma}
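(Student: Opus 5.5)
The plan is to differentiate \(\mathcal H_\lambda\) along a solution and check that the derivative vanishes identically by \eqref{eqA.3}. The system is Hamiltonian in the real variables \((u,v)\), and \(\mathcal H_\lambda\) is (up to the factor \(c\)) its Hamiltonian. Indeed, writing \(\rho=(u^2+v^2)^{1/2}\) and using \(F'(t)=f(t)t\), we get for \(\rho>0\)
\[
\partial_u F(\rho)=f(\rho)u,\qquad \partial_v F(\rho)=f(\rho)v .
\]
The function \((u,v)\mapsto F(\rho)\) is \(C^1\) on \(\R^2\), including at the origin, because \(f(0)=0\) and \(f\) is continuous. Hence
\[
\partial_u\mathcal H_\lambda=\bigl(mc^2+\lambda-f(\rho)\bigr)u=-c\,v',
\qquad
\partial_v\mathcal H_\lambda=-\bigl(mc^2-\lambda+f(\rho)\bigr)v=c\,u'
\]
by the two equations in \eqref{eqA.3}.

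By the chain rule,
\[
\frac{d}{dx}\mathcal H_\lambda(u,v)=\partial_u\mathcal H_\lambda\,u'+\partial_v\mathcal H_\lambda\,v'=-c\,v'u'+c\,u'v'=0 .
\]
This requires \((u,v)\in C^1\). That holds because the right-hand side of \eqref{eqA.3} is continuous, so any (Carathéodory or classical) solution is \(C^1\). Therefore \(\mathcal H_\lambda\) is constant along the orbit.

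For the homoclinic case, \((u(x),v(x))\to(0,0)\) as \(x\to\pm\infty\). Since \(\mathcal H_\lambda\) is continuous on \(\R^2\) and \(\mathcal H_\lambda(0,0)=-F(0)=0\), the constant value of \(\mathcal H_\lambda\) along the orbit equals \(\lim_{x\to\infty}\mathcal H_\lambda(u(x),v(x))=0\).

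The only delicate point is the differentiability of \(F(\rho)\) at the origin. It is harmless here: \(|F(\rho)|\le C\rho^{2}\) near \(0\) by \eqref{eq2.2}, and the partial derivatives \(f(\rho)u\) and \(f(\rho)v\) are continuous and vanish at the origin. I expect no real obstacle beyond this sign bookkeeping.
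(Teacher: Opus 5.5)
Your proposal is correct and follows the paper's proof essentially verbatim. You compute \(\partial_u\mathcal H_\lambda=-c\,v'\) and \(\partial_v\mathcal H_\lambda=c\,u'\) from \(F'(t)=f(t)t\) and \eqref{eqA.3}, conclude by the chain rule, and fix the constant at infinity using \(\mathcal H_\lambda(0,0)=0\); your remarks on the \(C^1\) regularity of \(F\bigl((u^2+v^2)^{1/2}\bigr)\) at the origin and of the solution only make explicit what the paper uses tacitly.
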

	
	\begin{proof}
		Set \(r=(u^2+v^2)^{1/2}\).  Since \(F'(r)=f(r)r\),
		\[
		\partial_u\mathcal H_\lambda
		=
		(mc^2+\lambda)u-f(r)u
		=
		-c\,v',
		\]
		and
		\[
		\partial_v\mathcal H_\lambda
		=
		-(mc^2-\lambda)v-f(r)v
		=
		c\,u',
		\]
		where \eqref{eqA.3} was used.  Therefore
		\[
		\frac{d}{dx}\mathcal H_\lambda(u(x),v(x))
		=
		\partial_u\mathcal H_\lambda\,u'
		+
		\partial_v\mathcal H_\lambda\,v'
		=
		-cv'u'+cu'v'=0 .
		\]
		For a homoclinic solution, \((u(x),v(x))\to(0,0)\) as \(|x|\to\infty\), and \(\mathcal H_\lambda(0,0)=0\).  Hence the constant is zero.
	\end{proof}
	
	\begin{lemma}\label{LemA.4}
		Let \(U_\lambda\) be a nontrivial full-line ground state.  Then, after a translation and multiplication by a constant complex phase, it may be chosen so that
		\[
		U_{\lambda,1}\ \text{is even},
		\qquad
		U_{\lambda,2}\ \text{is odd}.
		\]
		Moreover, in this normalization,
		\[
		U_{\lambda,2}(0)=0 .
		\]
	\end{lemma}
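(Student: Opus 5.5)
By \cref{ThmA.1} the ground state \(U_\lambda\) is \(H^1\cap C^1\), solves \eqref{eqA.1} classically and decays to zero. So \cref{LemA.2} applies: after a constant phase, \(U_\lambda=(u,iv)^T\) with \(u,v\) real solving the planar system \eqref{eqA.3}. By \cref{LemA.3} the orbit lies on the zero level set of \(\mathcal H_\lambda\). Neither a phase change nor a translation affects \(J_{\lambda,\R}\), the spectral projections, or criticality, so the normalized function is still a ground state. The goal is to find a point \(x_0\) with \(v(x_0)=0\), translate it to the origin, and then use reversibility of \eqref{eqA.3} to get the parity.

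\textbf{Step 1: a zero of \(v\).} The first equation of \eqref{eqA.2} gives \(c\,u'=-(\lambda-mc^2-f)v\cdot(-1)\); more precisely, \eqref{eqA.3} reads \(c u'=(\lambda-mc^2-f(r))v\). The coefficient \(\lambda-mc^2-f(r)\) is strictly negative, since \(\lambda<mc^2\) and \(f\ge0\). Hence the zeros of \(u'\) are exactly the zeros of \(v\).

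Next, \(u\) is real, continuous, not identically zero and tends to \(0\) at \(\pm\infty\). If \(u\equiv0\), the second equation would force \(v\equiv0\) by the first relation, contradicting nontriviality. So \(|u|\) attains a positive maximum at some \(x_0\). There \(u'(x_0)=0\), and therefore \(v(x_0)=0\).

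\textbf{Step 2: reflection symmetry.} Translate so that \(x_0=0\). Set \(\tilde u(x)=u(-x)\) and \(\tilde v(x)=-v(-x)\). A direct check shows that \((\tilde u,\tilde v)\) solves \eqref{eqA.3}. Indeed, \(c\tilde v'(x)=c v'(-x)=(f-mc^2-\lambda)u(-x)\), and \(c\tilde u'(x)=-cu'(-x)=(\lambda-mc^2-f)(-v(-x))\); in both, \(r\) is unchanged.

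Both solutions take the value \((u(0),0)\) at \(x=0\). The right-hand side of \eqref{eqA.3} is locally Lipschitz because \(f\in C^1\), so uniqueness for the Cauchy problem gives \(\tilde u=u\) and \(\tilde v=v\). Thus \(u\) is even and \(v\) is odd. Consequently \(U_{\lambda,1}=u\) is even and \(U_{\lambda,2}=iv\) is odd, with \(U_{\lambda,2}(0)=0\).

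\textbf{Main obstacle.} The only delicate point is producing a zero of \(v\), that is, a point of reflection symmetry. The sign structure of \eqref{eqA.3} settles this through the extremum of \(u\), so I expect no real difficulty.

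The other point to check is that local Lipschitz continuity of \((u,v)\mapsto f(r)(u,v)\) holds at every point, including near the origin. This follows from \(f\in C^1\) with \(f(0)=0\), so uniqueness is available everywhere.
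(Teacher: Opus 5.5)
Your proposal is correct and follows the paper's proof essentially step for step: phase normalization via \cref{LemA.2}, a maximum point $x_0$ of $|u|$ at which the strictly negative coefficient in the second equation of \eqref{eqA.3} forces $v(x_0)=0$, and then the reversibility $(x,u,v)\mapsto(2x_0-x,u,-v)$ combined with uniqueness for the Cauchy problem. The differences are cosmetic. You skip the sign normalization $u(x_0)>0$, which the parity claim does not need. The first displayed relation in your Step 1 is garbled, but you correct it immediately. You also explicitly note that translation and phase preserve the ground-state property, which the paper leaves implicit.
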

	
	\begin{proof}
		By \cref{LemA.2}, after a constant phase rotation,
		\[
		U_\lambda=(u,iv)^T
		\]
		with \(u,v\) real and satisfying \eqref{eqA.3}.  The function \(u\) is not identically zero.  Otherwise the second equation in \eqref{eqA.3} gives \(v\equiv0\), contradicting the nontriviality of \(U_\lambda\).
		
		Since \(u(x)\to0\) as \(|x|\to\infty\), the continuous function \(|u|\) attains a positive maximum at some point \(x_0\).  Multiplying by \(-1\), if necessary, we may assume
		\[
		u(x_0)=\max_{x\in\R}|u(x)|>0 .
		\]
		Then \(u'(x_0)=0\).  The second equation in \eqref{eqA.3} gives
		\[
		\bigl(\lambda-mc^2-f((u(x_0)^2+v(x_0)^2)^{1/2})\bigr)v(x_0)=0 .
		\]
		The coefficient is strictly negative, because \(\lambda<mc^2\) and \(f\ge0\). Thus \(v(x_0)=0\).
		
		The real system \eqref{eqA.3} is reversible under
		\[
		x\mapsto 2x_0-x,
		\qquad
		(u,v)\mapsto(u,-v).
		\]
		Indeed, if \((u,v)\) is a solution, then
		\[
		\widetilde u(x):=u(2x_0-x),
		\qquad
		\widetilde v(x):=-v(2x_0-x)
		\]
		is also a solution.  At \(x=x_0\),
		\[
		\widetilde u(x_0)=u(x_0),
		\qquad
		\widetilde v(x_0)=v(x_0)=0 .
		\]
		By uniqueness for the Cauchy problem of \eqref{eqA.3},
		\[
		\widetilde u=u,
		\qquad
		\widetilde v=v .
		\]
		Therefore
		\[
		u(x_0+x)=u(x_0-x),
		\qquad
		v(x_0+x)=-v(x_0-x).
		\]
		Translating \(x_0\) to the origin gives the stated parity.  Since the second component is \(iv\), it is odd and vanishes at \(0\).
	\end{proof}
	
	\begin{lemma}\label{LemA.5}
		Let \(I\) be a compact interval with \(I\Subset(-mc^2,mc^2)\), and choose for each \(\lambda\in I\) the centered representative from \cref{LemA.4}, with real functions \(u_\lambda,v_\lambda\),
		\[
		U_\lambda=(u_\lambda,iv_\lambda)^T,
		\qquad
		u_\lambda(0)=\max_{x\in\R}|u_\lambda(x)|>0,
		\qquad
		v_\lambda(0)=0 .
		\]
		Then there exist constants \(0<b_I<B_I<\infty\) such that
		\[
		b_I\le u_\lambda(0)\le B_I
		\qquad(\lambda\in I).
		\]
		Moreover,
		\[
		\sup_{\lambda\in I}\norm{U_\lambda}_{L^\infty(\R)}<\infty .
		\]
	\end{lemma}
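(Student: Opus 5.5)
The plan is to obtain both bounds from the conserved quantity of \cref{LemA.3}, evaluated at the center \(x=0\), and from the bound \(|u_\lambda|\le u_\lambda(0)\) given by the normalization. At \(x=0\) one has \(v_\lambda(0)=0\). Write \(a=u_\lambda(0)>0\). Since \(U_\lambda\) is homoclinic, \cref{LemA.3} gives
\[
\frac{mc^2+\lambda}{2}a^2=F(a).
\]
In other words, \(a\) is a positive zero of \(g_\lambda(t):=F(t)/t^2-\tfrac12(mc^2+\lambda)\).

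\emph{Lower bound.} By \eqref{eq2.2} with \(\eps\) small, \(F(t)/t^2\le\eps+C_\eps t^{p-2}\). Choose \(\eps\le\tfrac14(mc^2+\min I)\). Then any zero must satisfy \(C_\eps a^{p-2}\ge\tfrac14(mc^2+\min I)>0\), since \(mc^2+\lambda\ge mc^2+\min I>0\) on \(I\). This gives \(a\ge b_I>0\), uniformly in \(\lambda\in I\).

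\emph{Upper bound.} By \eqref{eq2.4} with \(r=1\), \(F(t)/t^2\ge F(1)t^{\theta-2}\) for \(t\ge1\), and \(F(1)>0\) by (F3). So if \(a\ge1\), the identity gives \(F(1)a^{\theta-2}\le\tfrac12(mc^2+\max I)\). Hence
\[
a\le B_I:=\max\Bigl\{1,\bigl((mc^2+\max I)/(2F(1))\bigr)^{1/(\theta-2)}\Bigr\}.
\]

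\emph{Bound on \(\norm{U_\lambda}_{L^\infty}\).} It remains to control \(|v_\lambda|\) globally, since \(|u_\lambda|\le a\le B_I\) already. I would use the conserved quantity at an arbitrary \(x\):
\[
\frac{mc^2-\lambda}{2}v^2+F(r)=\frac{mc^2+\lambda}{2}u^2\le\frac{mc^2+\lambda}{2}B_I^2 .
\]
Both terms on the left are nonnegative. With \(mc^2-\lambda\ge mc^2-\max I>0\), this gives \(v^2\le\frac{mc^2+\max I}{mc^2-\max I}B_I^2\). Therefore \(|U_\lambda|^2=u^2+v^2\) is bounded uniformly on \(I\).

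The only point needing care is the use of the correct sign convention in \(\mathcal H_\lambda\), so that the \(v^2\)-term and \(F\) appear with the same sign on the left. Apart from that, the statement is a direct consequence of \cref{LemA.3} and (F2)--(F3), and I do not expect a real obstacle.
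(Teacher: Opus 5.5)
Your proposal is correct and follows essentially the same route as the paper. Both evaluate the zero Hamiltonian identity at the center to get \(F(a_\lambda)=\frac{mc^2+\lambda}{2}a_\lambda^2\), use \(F(t)=o(t^2)\) at \(0\) (which you quantify via \eqref{eq2.2}) for the lower bound and the superquadratic growth \eqref{eq2.4} for the upper bound, and then control \(v_\lambda\) by \(u_\lambda\) through the same identity at arbitrary \(x\). The only difference is cosmetic: you apply \eqref{eq2.4} with \(r=1\) directly, whereas the paper phrases the upper bound through the strict monotonicity of \(F(t)t^{-2}\).
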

	
	\begin{proof}
		Set
		\[
		a_\lambda:=u_\lambda(0)>0.
		\]
		At \(x=0\), the Hamiltonian identity in \cref{LemA.3} gives
		\[
		F(a_\lambda)
		=
		\frac{mc^2+\lambda}{2}a_\lambda^2 .
		\]
		Since \(I\Subset(-mc^2,mc^2)\),
		\[
		0<
		A_-(I):=\frac12\inf_{\lambda\in I}(mc^2+\lambda)
		\le
		\frac12(mc^2+\lambda)
		\le
		A_+(I):=\frac12\sup_{\lambda\in I}(mc^2+\lambda)
		<\infty .
		\]
		
		Because \(f(0)=0\) and \(f\) is continuous,
		\[
		F(t)=\int_0^t f(s)s\,\dd s=o(t^2)
		\qquad(t\downarrow0).
		\]
		Therefore the identity above excludes \(a_\lambda\to0\) uniformly for \(\lambda\in I\).
		
		On the other hand, by \textnormal{(F3)},
		\[
		\frac{d}{dt}\left(\frac{F(t)}{t^2}\right)
		=
		\frac{f(t)t^2-2F(t)}{t^3}
		\ge
		\frac{(\theta-2)F(t)}{t^3}>0
		\qquad(t>0).
		\]
		Thus \(t\mapsto F(t)t^{-2}\) is strictly increasing on \((0,\infty)\).  Also, by \eqref{eq2.4},
		\[
		F(t)t^{-2}\to\infty
		\qquad(t\to\infty).
		\]
		Hence the identity
		\[
		\frac{F(a_\lambda)}{a_\lambda^2}
		=
		\frac{mc^2+\lambda}{2}
		\]
		also gives a uniform upper bound for \(a_\lambda\).
		
		For arbitrary \(x\), the zero Hamiltonian identity gives
		\[
		\frac{mc^2+\lambda}{2}u_\lambda(x)^2
		-
		\frac{mc^2-\lambda}{2}v_\lambda(x)^2
		=
		F(|U_\lambda(x)|)\ge0 .
		\]
		Consequently
		\[
		|v_\lambda(x)|^2
		\le
		\frac{mc^2+\lambda}{mc^2-\lambda}u_\lambda(x)^2
		\le C_Iu_\lambda(x)^2 .
		\]
		Therefore
		\[
		|U_\lambda(x)|\le C_I|u_\lambda(x)|
		\le C_Iu_\lambda(0).
		\]
		The uniform upper bound for \(u_\lambda(0)\) gives the asserted \(L^\infty\)-bound.
	\end{proof}
	
	\begin{lemma}\label{LemA.6}
		Let \(I\) be a compact interval with \(I\Subset(-mc^2,mc^2)\), and let \(U_\lambda\) be the centered even-odd representatives above.  Then, for every \(\delta>0\), there exists \(R_\delta>0\) such that
		\[
		\sup_{\lambda\in I}\sup_{|x|\ge R_\delta}|U_\lambda(x)|\le\delta .
		\]
	\end{lemma}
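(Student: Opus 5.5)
The plan is to argue by contradiction, using compactness of the centered family, the uniform bounds of \cref{LemA.5}, and the phase-plane structure of \eqref{eqA.3}. Suppose the claim fails. Then there are \(\delta>0\), \(\lambda_n\in I\) and points \(x_n\) with \(|x_n|\to\infty\) and \(|U_{\lambda_n}(x_n)|>\delta\). By the even-odd parity of \cref{LemA.4}, \(|U_\lambda|\) is even, so we may take \(x_n\to+\infty\). Passing to a subsequence, we take \(\lambda_n\to\lambda_*\in I\).

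The first step is local compactness. \cref{LemA.5} gives \(\sup_n\norm{U_{\lambda_n}}_{L^\infty}<\infty\), and the system \eqref{eqA.3} then gives uniform Lipschitz bounds. Arzel\`a--Ascoli yields locally uniform convergence \(U_{\lambda_n}\to U_*\), and \(U_*\) solves \eqref{eqA.3} at \(\lambda_*\). Since \(u_{\lambda_n}(0)\ge b_I\) and \(v_{\lambda_n}(0)=0\), the limit \(U_*\) is nontrivial and even-odd. Along the orbit, \(\mathcal H_{\lambda_*}=0\) by passing to the limit in \cref{LemA.3}, because the quantity is constant and equal to zero for each \(n\).

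The key step is to show that the orbit of \(U_*\) is homoclinic and that the approximating orbits follow it uniformly. The zero level set \(\{\mathcal H_{\lambda_*}=0\}\) near the origin consists of the branches where \((mc^2+\lambda)u^2\approx(mc^2-\lambda)v^2\). These are the stable and unstable manifolds of the hyperbolic equilibrium \(0\), with eigenvalues \(\pm\kappa\), where \(c\kappa=\sqrt{(mc^2)^2-\lambda^2}\); the rate \(\kappa\) is bounded below uniformly on \(I\). The level set away from \(0\) is compact by the bound from \textnormal{(F3)}/\eqref{eq2.4}. I would use the planar Hamiltonian structure: starting from \((a,0)\) with \(F(a)=\frac{mc^2+\lambda}{2}a^2\), the orbit in \(\{\mathcal H=0\}\) follows a branch that contains no equilibrium other than \(0\). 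Indeed, equilibria require \(u=v=0\), since the coefficient of \(v\) is strictly negative and \(f(r)=mc^2+\lambda\) with \(v=0\) contradicts \(\mathcal H=0\) by the strict monotonicity of \(F(t)t^{-2}\). Hence the orbit reaches any neighbourhood \(B_\epsilon(0)\) in a finite time \(T_*\). By continuous dependence on \(\lambda\) and on initial data, for large \(n\) the orbit \(U_{\lambda_n}\) enters \(B_{\epsilon}(0)\) by a time \(T\le T_*+1\) that is uniform in \(n\).

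The final step is a local trapping estimate near \(0\), uniform in \(\lambda\in I\), which I expect to be the main obstacle. For \(\epsilon\) small, the quadratic form \(\mathcal H\) and the linearization dominate the nonlinearity, because \(f(r)\to0\). On the branch \(\{\mathcal H=0\}\cap B_\epsilon\), the quantity \(r^2\) satisfies \(\frac{d}{dx}r^2\le -\kappa r^2\) along the stable direction. To see this, compute \((u^2+v^2)'=\frac{2}{c}\bigl(\lambda-mc^2\bigr)\cdot 2uv\cdot\tfrac12+\dots\). Combined with the sign information that \(uv<0\) after the maximum of \(u\) (checked at \(x=0^+\): \(v'(0)<0\), \(u(0)>0\)), this gives monotone decay. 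A solution that has entered \(B_\epsilon\) on the decaying branch cannot leave it. This uses \(\mathcal H=0\) and the exclusion of the unstable branch, since \(r\to0\) as \(x\to\infty\). Choosing \(\epsilon\le\delta\) shows \(|U_{\lambda_n}(x)|\le\delta\) for \(x\ge T_*+1\), which contradicts \(x_n\to\infty\). Alternatively, I could replace this trapping argument by a uniform exponential bound \(|U_\lambda(x)|\le C e^{-\kappa(x-T)}\) from a Gronwall estimate, which would also feed directly into \eqref{eq3.1}.
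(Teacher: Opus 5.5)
Your route (contradiction, Arzel\`a--Ascoli, continuous dependence, then trapping near the origin) differs from the paper's. However, the trapping step, which decides everything, has the wrong sign as written. From \eqref{eqA.3} the \(f\)-terms cancel exactly:
\[
\frac{d}{dx}\bigl(u^2+v^2\bigr)=-4mc\,uv .
\]
Moreover \(c\,v'(0)=\bigl(f(a_\lambda)-mc^2-\lambda\bigr)a_\lambda>0\), since \textnormal{(F3)} and \(F(a_\lambda)=\frac{mc^2+\lambda}{2}a_\lambda^2\) give \(f(a_\lambda)\ge\frac{\theta}{2}(mc^2+\lambda)\). So \(v'(0)>0\), not \(<0\), and the stable branch is the one with \(uv>0\). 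With your \(uv<0\) the radius would grow, i.e.\ the orbit would lie on the unstable branch you want to exclude.

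Checking the sign at \(x=0^+\) also does not give it for all \(x>0\). The missing observation uses \(\mathcal H_\lambda=0\). The zero level set meets \(\{u=0\}\) only at the origin, which is not reached in finite time. It meets \(\{v=0\}\) only at \((\pm a_\lambda,0)\). The orbit cannot return to \((a_\lambda,0)\) without being periodic, which contradicts decay. Hence \(u_\lambda,v_\lambda>0\) on \((0,\infty)\), and \(|U_\lambda|\) is strictly decreasing there for every \(\lambda\in I\).

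With this correction your argument closes, and more simply than you planned. Global monotonicity replaces the local hyperbolic trapping, so it suffices to have \(|U_{\lambda_n}(T)|<\delta\) at a single time \(T\) for large \(n\). For that, note that \(U_*(0)=(a_{\lambda_*},0)^T\), because \(F(a)=\frac{mc^2+\lambda_*}{2}a^2\) has a unique positive root. Then \(U_*=U_{\lambda_*}\) by Cauchy uniqueness, and \(U_{\lambda_*}\to0\) by \cref{ThmA.1}. Your phase-plane argument for the homoclinicity of \(U_*\) would otherwise also need that the branch through \((a_{\lambda_*},0)\) ends at the origin rather than closing up.

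The paper instead bounds the transit time \(T_{\delta_0}(\lambda)\) explicitly. It must control the square-root singularity of \(1/V_\lambda\) at the turning point \(u=a_\lambda\) uniformly in \(\lambda\), and in return gives an explicit \(R_\delta\). Your compactness route gives no explicit \(R_\delta\) but avoids the turning-point analysis. The monotonicity it relies on is the same identity \(r_\lambda r_\lambda'=-\frac{2mc^2}{c}u_\lambda v_\lambda\) that the paper uses in \cref{LemA.7}.
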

	
	\begin{proof}
		By parity it is enough to consider \(x\ge0\).  Put
		\[
		a_\lambda=u_\lambda(0).
		\]
		By \cref{LemA.5}, \(a_\lambda\) remains in a compact subinterval of \((0,\infty)\).
		
		We first describe the zero Hamiltonian curve.  Write
		\[
		A_\lambda=\frac{mc^2+\lambda}{2},
		\qquad
		B_\lambda=\frac{mc^2-\lambda}{2}.
		\]
		The centered solution satisfies
		\[
		F(a_\lambda)=A_\lambda a_\lambda^2 .
		\]
		Since \(t\mapsto F(t)t^{-2}\) is strictly increasing and ranges from \(0\) to \(\infty\), this equation determines \(a_\lambda\) uniquely, and \(\lambda\mapsto a_\lambda\) is continuous.
		
		For each \(u\in(0,a_\lambda)\), the equation
		\begin{equation}\label{eqA.4}
			A_\lambda u^2
			-
			B_\lambda v^2
			-
			F\bigl((u^2+v^2)^{1/2}\bigr)=0
		\end{equation}
		has a unique solution \(v=V_\lambda(u)>0\).  Indeed, the left-hand side of \eqref{eqA.4}, viewed as a function of \(v^2\), is strictly decreasing.  At \(v=0\) it equals
		\[
		A_\lambda u^2-F(u)>0,
		\]
		because \(u<a_\lambda\) and \(F(t)t^{-2}\) is strictly increasing.  As \(|v|\to\infty\), it tends to \(-\infty\).  Hence the positive branch \(V_\lambda(u)\) is well-defined.  By the implicit function theorem it is continuous in \((\lambda,u)\) away from the endpoint \(u=a_\lambda\).
		
		At \(x=0\),
		\[
		c\,v_\lambda'(0)
		=
		\bigl(f(a_\lambda)-mc^2-\lambda\bigr)a_\lambda .
		\]
		Using \textnormal{(F3)} and
		\[
		F(a_\lambda)=\frac{mc^2+\lambda}{2}a_\lambda^2,
		\]
		we get
		\[
		f(a_\lambda)
		\ge
		\frac{\theta F(a_\lambda)}{a_\lambda^2}
		=
		\frac{\theta}{2}(mc^2+\lambda).
		\]
		Thus
		\[
		f(a_\lambda)-mc^2-\lambda
		\ge
		\left(\frac{\theta}{2}-1\right)(mc^2+\lambda),
		\]
		which is bounded below by a positive constant for \(\lambda\in I\).  Hence \(v_\lambda'(0)>0\), and for small positive \(x\) the orbit lies on the positive branch \(v=V_\lambda(u)\).  By uniqueness of the ODE and by the zero Hamiltonian identity, the orbit cannot leave this branch before reaching the origin.  Therefore, for \(x>0\),
		\[
		v_\lambda(x)>0,
		\qquad
		0<u_\lambda(x)<a_\lambda .
		\]
		The second equation in \eqref{eqA.3} then gives
		\[
		c\,u_\lambda'
		=
		\bigl(\lambda-mc^2-f(|U_\lambda|)\bigr)v_\lambda<0 .
		\]
		Thus \(u_\lambda\) is strictly decreasing on the right half-line.
		
		Fix \(\delta>0\).  By \cref{LemA.5}, there is \(C_I>0\) such that
		\[
		|U_\lambda(x)|\le C_Iu_\lambda(x)
		\qquad(x\ge0,\ \lambda\in I).
		\]
		Choose
		\[
		0<\delta_0<\frac12\inf_{\lambda\in I}a_\lambda
		\]
		so small that \(C_I\delta_0\le\delta\).  It remains to prove that the time needed for \(u_\lambda\) to decrease from \(a_\lambda\) to \(\delta_0\) is uniformly bounded.
		
		Along the positive branch, write
		\[
		r_\lambda(u)=\bigl(u^2+V_\lambda(u)^2\bigr)^{1/2}.
		\]
		Since
		\[
		c\,u_\lambda'
		=
		-\bigl(mc^2-\lambda+f(r_\lambda(u_\lambda))\bigr)
		V_\lambda(u_\lambda),
		\]
		the time needed to move from \(a_\lambda\) to \(\delta_0\) is
		\[
		T_{\delta_0}(\lambda)
		=
		\int_{\delta_0}^{a_\lambda}
		\frac{c\,\dd u}
		{\bigl(mc^2-\lambda+f(r_\lambda(u))\bigr)V_\lambda(u)} .
		\]
		The denominator is positive.  On compact subintervals \(\delta_0\le u\le a_\lambda-\varepsilon\), the integrand depends continuously on \((\lambda,u)\) and is uniformly bounded.
		
		It remains to consider the endpoint \(u=a_\lambda\).  Let
		\[
		G_\lambda(u,w)
		=
		A_\lambda u^2
		-
		B_\lambda w
		-
		F\bigl((u^2+w)^{1/2}\bigr),
		\qquad w=v^2.
		\]
		Then \(G_\lambda(a_\lambda,0)=0\), and
		\[
		\partial_uG_\lambda(a_\lambda,0)
		=
		a_\lambda\bigl(2A_\lambda-f(a_\lambda)\bigr)<0,
		\]
		while
		\[
		\partial_wG_\lambda(a_\lambda,0)
		=
		-B_\lambda-\frac12f(a_\lambda)<0 .
		\]
		The inequalities are uniform for \(\lambda\in I\), because \(a_\lambda\) stays in a compact subinterval of \((0,\infty)\) and \(f(a_\lambda)-2A_\lambda\) is uniformly positive.  Thus
		\[
		V_\lambda(u)^2
		=
		w_\lambda(u)
		=
		c_\lambda(a_\lambda-u)+O((a_\lambda-u)^2)
		\qquad(u\uparrow a_\lambda),
		\]
		with \(c_\lambda\) bounded above and below by positive constants uniformly for \(\lambda\in I\).  Hence the endpoint singularity of \(1/V_\lambda(u)\) is of order \((a_\lambda-u)^{-1/2}\), uniformly in \(\lambda\), and is integrable.  Therefore
		\[
		\sup_{\lambda\in I}T_{\delta_0}(\lambda)<\infty .
		\]
		Let \(R_\delta:=\sup_{\lambda\in I}T_{\delta_0}(\lambda)\).  If \(x\ge R_\delta\), then \(u_\lambda(x)\le\delta_0\), hence
		\[
		|U_\lambda(x)|\le C_I\delta_0\le\delta .
		\]
		The left half-line follows by parity.
	\end{proof}
	
	\begin{lemma}\label{LemA.7}
		Let \(I\) be a compact interval with \(I\Subset(-mc^2,mc^2)\), and choose the centered even-odd ground states above.  Then there exist \(C_I,\alpha_I>0\) such that
		\[
		|U_\lambda(x)|+|U_\lambda'(x)|
		\le C_Ie^{-\alpha_I|x|}
		\qquad(x\in\R,\ \lambda\in I).
		\]
	\end{lemma}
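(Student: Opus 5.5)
The plan is to combine the uniform smallness from \cref{LemA.6} with a linearization of the real planar system \eqref{eqA.3} near the origin. By parity it suffices to treat \(x\ge0\). On that half-line \cref{LemA.6} shows that \(u_\lambda\) is positive and strictly decreasing, with \(v_\lambda>0\).

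\textbf{Step 1 (smallness of the nonlinear coefficient).} Fix \(\kappa_I>0\) small, say
\[
\kappa_I=\tfrac14\min_{\lambda\in I}(mc^2-|\lambda|).
\]
Since \(f\) is continuous with \(f(0)=0\), choose \(\delta>0\) with \(f(t)\le\kappa_I\) for \(0\le t\le\delta\). Take \(R_\delta\) from \cref{LemA.6}, so that \(f(|U_\lambda(x)|)\le\kappa_I\) for \(x\ge R_\delta\) and every \(\lambda\in I\).

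\textbf{Step 2 (decay of the real components).} On \([R_\delta,\infty)\) I would exploit the sign structure of \eqref{eqA.3}. Set
\[
g=f(|U_\lambda|),\qquad
\alpha_\pm=\frac{mc^2\pm\lambda}{c}.
\]
Then
\[
c\,v'=-(mc^2+\lambda-g)u,\qquad c\,u'=-(mc^2-\lambda+g)v .
\]
Here \(u,v>0\), so both derivatives are negative. I would use the weighted quantity
\[
\Phi=\sqrt{mc^2-\lambda}\;u+\sqrt{mc^2+\lambda}\;v .
\]
A direct computation gives \(c\,\Phi'\le-(\sqrt{(mc^2+\lambda)(mc^2-\lambda)}-C\kappa_I)\,\Phi\). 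The point is that the off-diagonal coefficients have product close to \((mc^2)^2-\lambda^2\), uniformly positive on \(I\), and the \(g\)-perturbations are at most \(\kappa_I\) times bounded factors. Choosing \(\kappa_I\) smaller if needed, this yields
\[
\Phi'\le-2\alpha_I\Phi,\qquad
2\alpha_I:=\frac{1}{2c}\min_{\lambda\in I}\sqrt{m^2c^4-\lambda^2}>0.
\]
Grönwall's inequality then gives
\[
\Phi(x)\le\Phi(R_\delta)\,e^{-2\alpha_I(x-R_\delta)} .
\]
By the uniform \(L^\infty\) bound of \cref{LemA.5}, \(\Phi(R_\delta)\) is bounded uniformly in \(\lambda\). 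Since \(u,v>0\), \(\Phi\) controls \(|U_\lambda|\), and we obtain \(|U_\lambda(x)|\le Ce^{-\alpha_I x}\) for \(x\ge R_\delta\).

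If the algebra for the precise weighted combination is messy, there is an alternative. Differentiate once to get a second-order inequality \(u''\ge k^2u\) with \(k^2\) close to \((m^2c^4-\lambda^2)/c^2\) for \(x\ge R_\delta\), using \(u'<0\) and \(g\) small. A comparison or maximum-principle argument against \(e^{-k(x-R_\delta)}\) on \([R_\delta,\infty)\) then applies, with \(u\to0\) at infinity. Since \(|v|\le C_I u\) (from \cref{LemA.5}), \(v\) is controlled as well.

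\textbf{Step 3 (the derivative and the bounded region).} On \([0,R_\delta]\) the bound is trivial: the \(L^\infty\) bound holds, and \(e^{-\alpha_I x}\ge e^{-\alpha_I R_\delta}\) there. For the derivative, read off \(U_\lambda'\) from \eqref{eqA.2}. Since \(f\) is bounded on the uniform range of \(|U_\lambda|\), we get \(|U_\lambda'|\le C|U_\lambda|\), which transfers the exponential bound. Enlarging \(C_I\) completes the estimate.

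\textbf{Main obstacle.} The main obstacle is Step 2: making the decay rate and constants uniform in \(\lambda\). Uniformity rests on two ingredients, namely \(R_\delta\) from \cref{LemA.6} being \(\lambda\)-independent and the linearized rate being bounded below on \(I\). I would first try the second-order comparison route, as it avoids fiddly weights.
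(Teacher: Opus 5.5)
Your strategy works, but the weighted functional in Step 2 has its weights swapped. Write \(a=mc^2-\lambda\), \(b=mc^2+\lambda\) and keep your \(g=f(\abs{U_\lambda})\). Then your \(\Phi=\sqrt a\,u+\sqrt b\,v\) satisfies exactly
\[
c\,\Phi'=-\sqrt{ab}\,\Phi+(a-b+g)\bigl(\sqrt b\,u-\sqrt a\,v\bigr).
\]
Here \(a-b=-2\lambda\) is not small, so the claimed bound \(c\Phi'\le-(\sqrt{ab}-C\kappa_I)\Phi\) does not follow from the ODE. The combination that decays at rate \(\sqrt{ab}/c\) for the linearized system is \(\sqrt b\,u+\sqrt a\,v\). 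For it, \(c\Phi'=-\sqrt{ab}\,\Phi-g(\sqrt b\,v-\sqrt a\,u)\), and the remainder is at most \(C_Ig\,\Phi\). Alternatively, keep your \(\Phi\) and use only \(u,v>0\):
\[
-c\Phi'=\sqrt b(b-g)u+\sqrt a(a+g)v\ge k_I\Phi
\]
for some uniform \(k_I>0\). This gives a worse rate, which the lemma does not care about.

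Your second-order route is fine. One has \(c^2u''=(a+g)(b-g)u-c\,g'v\). The last term is nonnegative because \(g'=f'(\abs{U})\,\abs{U}'\le0\), by (F4) and the decrease of \(\abs{U}\) (both \(u\) and \(v\) are positive and decreasing on \([R_\delta,\infty)\)). You need this sign, not \(u'<0\) itself, before applying the maximum principle to \(u-u(R_\delta)e^{-k(x-R_\delta)}\).

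Compared with the paper: the paper does not linearize at all. It solves the zero-level Hamiltonian identity together with \(u^2+v^2=r^2\) to get \(u^2/r^2\) and \(v^2/r^2\) exactly. Combining this with \(rr'=-\frac{2mc^2}{c}uv\) gives the closed formula
\[
\frac{r'}{r}=-\frac1c\sqrt{\bigl(mc^2-\lambda+\tilde g(r)\bigr)\bigl(mc^2+\lambda-\tilde g(r)\bigr)},\qquad \tilde g(r)=\frac{2F(r)}{r^2}\le f(r).
\]
One scalar Gr\"onwall step with an explicit rate, depending only on \(\dist(I,\{\pm mc^2\})\), then finishes.

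Your route uses only the ODE, the positivity of \(u,v\), and the smallness of \(f\) near \(0\). It is the generic linearization argument. With corrected weights it gives the near-optimal rate \(\sqrt{m^2c^4-\lambda^2}/c-O(\kappa_I)\). The paper's route is shorter because the first integral fixes the ratio \(u:v\) along the orbit. That is exactly the information under which even your original \(\Phi\) would decay at a rate close to \(\sqrt{ab}/c\).
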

	
	\begin{proof}
		Let
		\[
		\mu_I:=\dist\bigl(I,\{-mc^2,mc^2\}\bigr)>0 .
		\]
		Choose \(\delta>0\) so small that
		\[
		f(s)\le \frac{\mu_I}{4}
		\qquad(0\le s\le\delta).
		\]
		By \cref{LemA.6}, there exists \(R_I>0\) such that
		\[
		|U_\lambda(x)|\le\delta
		\qquad(|x|\ge R_I,\ \lambda\in I).
		\]
		Set \(\omega=mc^2\), \(r_\lambda=|U_\lambda|\), and \(g(r)=2F(r)/r^2\) for \(r>0\). On the right half-line, \(u_\lambda,v_\lambda>0\) by the proof of \cref{LemA.6}. The Hamiltonian identity and \eqref{eqA.3} give
		\[
		\frac{u_\lambda^2}{r_\lambda^2}
		=\frac{\omega-\lambda+g(r_\lambda)}{2\omega},
		\qquad
		\frac{v_\lambda^2}{r_\lambda^2}
		=\frac{\omega+\lambda-g(r_\lambda)}{2\omega},
		\qquad
		r_\lambda r_\lambda'=-\frac{2\omega}{c}u_\lambda v_\lambda.
		\]
		By \textnormal{(F4)}, \(0\le g(r)\le f(r)\). Thus, for \(x\ge R_I\),
		\[
		\frac{r_\lambda'}{r_\lambda}
		=-\frac1c\sqrt{\bigl(\omega-\lambda+g(r_\lambda)\bigr)
			\bigl(\omega+\lambda-g(r_\lambda)\bigr)}
		\le-\frac{\sqrt3\,\mu_I}{2c}.
		\]
		Integrating this inequality, using parity and the uniform amplitude bound from \cref{LemA.5}, gives
		\[
		|U_\lambda(x)|\le C_Ie^{-\alpha_I|x|}
		\qquad(x\in\R,\ \lambda\in I),
		\qquad \alpha_I=\frac{\sqrt3\,\mu_I}{2c}.
		\]
		
		Finally, from \eqref{eqA.2},
		\[
		|U_\lambda'(x)|
		\le
		C_I\bigl(1+f(|U_\lambda(x)|)\bigr)|U_\lambda(x)|.
		\]
		The family \(\{U_\lambda\}\) is uniformly bounded in \(L^\infty\) by \cref{LemA.5}; hence \(f(|U_\lambda|)\) is uniformly bounded. The exponential estimate for \(U_\lambda\) therefore gives the same exponential estimate for \(U_\lambda'\), after increasing \(C_I\).
	\end{proof}
	
	\begin{lemma}\label{LemA.8}
		Let \(I\) be a compact interval with \(I\Subset(-mc^2,mc^2)\).  The family \(\{U_\lambda:\lambda\in I\}\), with the centered even-odd normalization, is compact in \(H^1(\R,\C^2)\).
	\end{lemma}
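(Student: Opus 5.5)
The plan is to combine the uniform pointwise bounds already obtained with a sequential compactness argument in $H^1$. By \cref{LemA.7}, every $U_\lambda$ in the centered family satisfies
\[
|U_\lambda(x)|+|U_\lambda'(x)|\le C_Ie^{-\alpha_I|x|}\qquad(x\in\R,\ \lambda\in I).
\]
This gives a uniform $H^1$ bound and uniform tightness: $\sup_{\lambda\in I}\norm{U_\lambda}_{H^1(\{|x|>R\})}\le Ce^{-\alpha_IR}$. By the Fréchet--Kolmogorov criterion, $H^1$-compactness then reduces to $H^1$-compactness on each bounded interval $[-R,R]$. On such an interval I will obtain $C^1$ equicontinuity from the equation \eqref{eqA.2}. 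Since $U_\lambda$ is uniformly bounded by \cref{LemA.5}, $U_\lambda'$ is uniformly bounded. The derivative $U_\lambda'$ is expressed through $f(|U_\lambda|)U_\lambda$ and $\lambda U_\lambda$, and these are uniformly Lipschitz on $[-R,R]$ because $f\in C^1$ and $U_\lambda$ is uniformly Lipschitz. Arzelà--Ascoli therefore yields, along any sequence $\lambda_n$, a subsequence converging in $C^1([-R,R])$. A diagonal argument in $R$, together with the uniform exponential tails, gives convergence in $H^1(\R)$. This shows that the family is relatively compact.

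For compactness rather than relative compactness, I must show that every limit point belongs to the family. Let $\lambda_n\to\lambda_*$ and $U_{\lambda_n}\to U_*$ in $H^1$. Passing to the limit in \eqref{eqA.2} shows that $U_*$ solves the equation at $\lambda_*$. The limit inherits the normalization $U_*=(u_*,iv_*)^T$ with $u_*$ even and $v_*$ odd, and $u_*(0)=\max|u_*|=a_{\lambda_*}$, using the continuity of $\lambda\mapsto a_\lambda$ from the proof of \cref{LemA.6}. By \cref{LemA.5} we have $a_{\lambda_*}\ge b_I>0$, so the limit is nontrivial. The centered solution is uniquely determined by the Cauchy data $(a_\lambda,0)$ at $x=0$ through the ODE \eqref{eqA.3}. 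Hence, if the representative $U_{\lambda_*}$ is chosen with these data, uniqueness for the Cauchy problem forces $U_*=U_{\lambda_*}$. The same uniqueness shows that the whole family is $U_\lambda=\Phi_\lambda(a_\lambda,0)$, the ODE flow at time $x$ started from $(a_\lambda,0)$. Continuous dependence on the parameter and the initial data then gives continuity of $\lambda\mapsto U_\lambda$ in $C^1_{\rm loc}$, and the uniform tails upgrade this to $H^1$. The family is then the continuous image of the compact set $I$, hence compact; the Arzelà--Ascoli step above serves as a consistency check.

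The main obstacle is justifying that the centered representative is determined by $\lambda$ alone, that is, that the chosen ground state coincides with the ODE orbit starting from $(a_\lambda,0)$. This follows from \cref{LemA.3,LemA.4,LemA.5}: any centered ground state has $v(0)=0$, and $F(u(0))=A_\lambda u(0)^2$ has the unique positive root $a_\lambda$, with $u(0)>0$ fixed by the sign normalization. A second point to check is that the Lipschitz continuity of the vector field in \eqref{eqA.3} holds uniformly in $\lambda$ on bounded sets, which is needed for continuous dependence. This holds because $f\in C^1$ and $r\mapsto f(r)$ composed with $|\cdot|$ is locally Lipschitz.
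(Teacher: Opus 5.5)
Your proposal is correct and follows essentially the same route as the paper: the uniform $L^\infty$ and exponential bounds from \cref{LemA.5,LemA.7}, Arzel\`a--Ascoli in $C^1_{\rm loc}$ upgraded to $H^1$ by the uniform tails, and identification of the limit with $U_{\lambda_*}$ through the Cauchy data $(a_{\lambda_*},0)$, using continuity of $\lambda\mapsto a_\lambda$ and uniqueness for the Cauchy problem. Your extra framing of the family as the continuous image of $I$ under the ODE flow rests on the same uniqueness argument, and it additionally shows explicitly that $\lambda\mapsto U_\lambda$ is continuous into $H^1$.
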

	
	\begin{proof}
		Let \(\lambda_n\in I\).  Passing to a subsequence, assume
		\[
		\lambda_n\to\lambda_*\in I .
		\]
		By \cref{LemA.5}, the sequence \(U_{\lambda_n}\) is uniformly bounded in \(L^\infty(\R)\).  The ODE system \eqref{eqA.2} then gives a uniform \(C^1_{\rm loc}\) bound.  Since the right-hand side of the ODE is locally Lipschitz uniformly on bounded sets of \(U\) and \(\lambda\), the derivatives are locally equicontinuous.  By Arzelà-Ascoli, after passing to a further subsequence,
		\[
		U_{\lambda_n}\to U_*
		\qquad\text{in }C^1_{\rm loc}(\R,\C^2).
		\]
		The limit solves \eqref{eqA.1} with parameter \(\lambda_*\), has the same even-odd normalization, and is nontrivial because
		\[
		u_{\lambda_n}(0)\ge b_I>0 .
		\]
		
		By \cref{LemA.7},
		\[
		|U_{\lambda_n}(x)|+|U_{\lambda_n}'(x)|
		\le C_Ie^{-\alpha_I|x|}
		\qquad(n\in\mathbb N,\ x\in\R).
		\]
		Hence
		\[
		\sup_n
		\int_{|x|\ge R}
		\left(|U_{\lambda_n}(x)|^2+|U_{\lambda_n}'(x)|^2\right)\,\dd x
		\to0
		\qquad(R\to\infty).
		\]
		The local \(C^1\)-convergence, together with this uniform tail estimate, implies
		\[
		U_{\lambda_n}\to U_*
		\qquad\text{strongly in }H^1(\R,\C^2).
		\]
		By the continuity of \(\lambda\mapsto a_\lambda\) established in \cref{LemA.6},
		\[
		U_*(0)=(a_{\lambda_*},0)^T=U_{\lambda_*}(0).
		\]
		Both functions solve the same first-order system with parameter \(\lambda_*\), so uniqueness for the Cauchy problem gives \(U_*=U_{\lambda_*}\).  Thus every sequence in the family has a subsequence converging to an element of the family, which proves compactness in \(H^1(\R,\C^2)\).
	\end{proof}
	
	\begin{lemma}\label{LemA.9}
		Let \(I\) be a compact interval with \(I\Subset(-mc^2,mc^2)\).  The map
		\[
		\lambda\mapsto d_\lambda^\infty
		\]
		is continuous on \(I\), and
		\[
		\inf_{\lambda\in I}d_\lambda^\infty>0 .
		\]
	\end{lemma}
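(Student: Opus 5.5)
The plan is to express $d_\lambda^\infty$ through the explicit ground states $U_\lambda$ and then invoke the compactness of $\{U_\lambda\}$ from \cref{LemA.8}. By \cref{ThmA.1} and the identity used in \cref{Lem3.2},
\[
d_\lambda^\infty=J_{\lambda,\R}(U_\lambda)=\int_\R H(\abs{U_\lambda})\,\dd x .
\]

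The positivity part comes first. Since $U_\lambda\neq0$ and $H(t)>0$ for $t>0$ by \eqref{eq2.3}, each $d_\lambda^\infty$ is positive. A uniform lower bound follows from \cref{LemA.5}: $\abs{U_\lambda(0)}=u_\lambda(0)\ge b_I$. By \cref{LemA.7}, $\abs{U_\lambda'}\le C_I$, so $\abs{U_\lambda}\ge b_I/2$ on $[-b_I/(2C_I),b_I/(2C_I)]$. Because $H$ is nondecreasing on $[b_I/2,\infty)$, or simply because $H$ is continuous and positive on the compact range $[b_I/2,\sup_I\norm{U_\lambda}_\infty]$, this gives
\[
d_\lambda^\infty\ge \frac{b_I}{C_I}\min_{[b_I/2,M]}H>0 ,
\]
uniformly for $\lambda\in I$.

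For continuity, take $\lambda_n\to\lambda_*$ in $I$. The proof of \cref{LemA.8} gives $U_{\lambda_n}\to U_{\lambda_*}$ in $H^1(\R,\C^2)$, hence in $L^2\cap L^\infty$. The pointwise estimate $\abs{H(s)-H(t)}\le C_M(s+t)\abs{s-t}$ on $[0,M]$, which comes from $H\in C^1$ with $H'(t)=\tfrac12 f'(t)t^2+f(t)t-f(t)t$ bounded by $C_Mt$ via (F1), gives
\[
\int_\R\abs{H(\abs{U_{\lambda_n}})-H(\abs{U_{\lambda_*}})}\,\dd x\le C_M\bigl(\norm{U_{\lambda_n}}_{L^2}+\norm{U_{\lambda_*}}_{L^2}\bigr)\norm{U_{\lambda_n}-U_{\lambda_*}}_{L^2}\to0 .
\]
Since the whole sequence has every subsequence admitting such a further subsequence with the same limit $d_{\lambda_*}^\infty$, continuity follows.

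The main obstacle is making sure that the representative chosen in \cref{LemA.4,LemA.5} is an actual ground state, so that $J_{\lambda,\R}(U_\lambda)=d_\lambda^\infty$. \cref{LemA.8} identifies the limit $U_*$ with the normalized solution $U_{\lambda_*}$ by ODE uniqueness. The normalized solution is the translated and phase-rotated ground state from \cref{ThmA.1}, and translation and phase invariance of $J_{\lambda,\R}$ preserve the level. Hence the uniqueness argument also shows that the level is independent of the choice: $d_\lambda^\infty$ equals $\int H(\abs{U_\lambda})$ for the unique centered homoclinic orbit. With this identification, the computation above is legitimate.

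An alternative route avoids this subtlety. One proves upper semicontinuity by testing $\Gamma_{\lambda_n,\R}(P^+U_{\lambda_*})$, which is continuous in $\lambda$ by the argument of \cref{Lem3.7} on $\R$, and lower semicontinuity by the $H$-integral. I would keep this route as a backup.
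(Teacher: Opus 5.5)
Your argument is correct, but it takes a different route from the paper's. The paper never uses the fact that the subsequential limit in \cref{LemA.8} is the chosen $U_{\lambda_*}$. It argues in three steps:
\begin{itemize}
\item For lower semicontinuity, it only notes that an $H^1$-limit $U$ of $U_{\lambda_n}$ is a nonzero critical point of $J_{\lambda,\R}$. Since such a point cannot lie in $\Y_\R^-$, it belongs to $\N_{\lambda,\R}$, so $d_\lambda^\infty\le J_{\lambda,\R}(U)=\lim d_{\lambda_n}^\infty$ along the subsequence.
\item Upper semicontinuity is exactly your backup route: $d_{\lambda_n}^\infty\le\Gamma_{\lambda_n,\R}(P_\R^+U_\lambda)\to\Gamma_{\lambda,\R}(P_\R^+U_\lambda)=d_\lambda^\infty$ by fibre stability.
\item Uniform positivity comes from $\int_\R H(\abs{U_\lambda})\,\dd x>0$ for each $\lambda$, plus continuity on the compact set $I$.
\end{itemize}

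You instead use the stronger fact, contained in the proof of \cref{LemA.8}, that the centered solution is pinned down by $U_\lambda(0)=(a_\lambda,0)^T$ and Cauchy uniqueness. This makes $\lambda\mapsto U_\lambda$ continuous into $H^1$. Continuity of $d_\lambda^\infty=\int_\R H(\abs{U_\lambda})\,\dd x$ then follows from your local Lipschitz bound on $H$. You also get an explicit uniform lower bound $(b_I/C_I)\min_{[b_I/2,M]}H$ that needs no continuity at all.

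Your route is shorter and quantitative, but it rests entirely on the one-dimensional ODE uniqueness of the centered homoclinic orbit. The paper's variational route uses only subsequential compactness and the fibre characterization, so it does not need the limit to be identified.

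Two small remarks:
\begin{itemize}
\item The ``main obstacle'' you describe is not really one. The normalized $U_\lambda$ is a ground state simply because translations and constant phases preserve $J_{\lambda,\R}$ and its critical set. What you genuinely need from uniqueness is the identification $U_*=U_{\lambda_*}$ in \cref{LemA.8}, which you do invoke.
\item The bound $\abs{H'(t)}=\tfrac12 f'(t)t^2\le C_Mt$ on $[0,M]$ comes from $f\in C^1([0,\infty))$, not from (F1).
\end{itemize}
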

	
	\begin{proof}
		Let \(\lambda_n\to\lambda\) in \(I\).  By \cref{LemA.8}, after passing to a subsequence,
		\[
		U_{\lambda_n}\to U
		\qquad\text{strongly in }H^1(\R,\C^2).
		\]
		Passing to the equation gives
		\[
		(\D_\R+\lambda)U=f(|U|)U .
		\]
		Moreover \(U\not\equiv0\), because the centered amplitudes satisfy
		\[
		u_{\lambda_n}(0)\ge b_I>0 .
		\]
		Thus \(J'_{\lambda,\R}(U)=0\) and \(U\ne0\).
		
		A nonzero critical point cannot belong to \(\Y_{\R}^-\).  Indeed, if \(U\in\Y_{\R}^-\), then testing the critical equation with \(U\) gives
		\[
		Q_{\lambda,\R}(U)
		=
		\int_\R f(|U|)|U|^2\,\dd x\ge0.
		\]
		But the quadratic form is strictly negative on \(\Y_{\R}^-\setminus\{0\}\) because \(\lambda\in(-mc^2,mc^2)\).  This is a contradiction.  Hence \(U\notin\Y_{\R}^-\), and since \(U\) is a nonzero critical point,
		\[
		U\in\N_{\lambda,\R}.
		\]
		Therefore
		\[
		d_\lambda^\infty
		\le
		J_{\lambda,\R}(U)
		=
		\lim_{n\to\infty}J_{\lambda_n,\R}(U_{\lambda_n})
		=
		\lim_{n\to\infty}d_{\lambda_n}^\infty
		\]
		along the chosen subsequence.  This gives lower semicontinuity.
		
		For the reverse inequality, fix the ground state \(U_\lambda\) at the limiting parameter and set
		\[
		w=P_\R^+U_\lambda\in\Y_\R^+\setminus\{0\}.
		\]
		By the fibre minimax formula,
		\[
		d_{\lambda_n}^\infty
		\le
		\Gamma_{\lambda_n,\R}(w).
		\]
		The fibre anti-coercivity argument in \cref{Prop2.15}, applied to the fixed positive direction \(w\), gives a uniform bound on all maximizers of \(J_{\lambda_n,\R}\) over \(\widehat\Y_\R(w)\), for \(n\) large.  Since \(J_{\lambda_n,\R}\to J_{\lambda,\R}\) on bounded subsets of \(\Y_\R\), the standard fibre-stability argument gives
		\[
		\Gamma_{\lambda_n,\R}(w)
		\to
		\Gamma_{\lambda,\R}(w).
		\]
		Indeed, the liminf inequality is obtained by testing the \(n\)-th fibre with the fixed limiting maximizer, while the limsup inequality follows by taking bounded maximizers on the \(n\)-th fibres, passing weakly to the limiting fibre, and using weak upper semicontinuity exactly as in \cref{Prop2.15}.  Since \(U_\lambda\in\N_{\lambda,\R}\), the fibre maximality part of \cref{Prop2.15} gives
		\[
		\Gamma_{\lambda,\R}(w)
		=
		J_{\lambda,\R}(U_\lambda)
		=
		d_\lambda^\infty .
		\]
		Therefore
		\[
		\limsup_{n\to\infty}d_{\lambda_n}^\infty
		\le
		d_\lambda^\infty .
		\]
		Together with lower semicontinuity, this proves continuity.
		
		Finally, for every \(\lambda\in I\),
		\[
		d_\lambda^\infty
		=
		J_{\lambda,\R}(U_\lambda)
		-
		\frac12J'_{\lambda,\R}(U_\lambda)[U_\lambda]
		=
		\int_\R H(|U_\lambda|)\,\dd x .
		\]
		By \eqref{eq2.3}, the integrand is nonnegative and is strictly positive on the set where \(U_\lambda\ne0\).  Hence
		\[
		d_\lambda^\infty>0
		\qquad(\lambda\in I).
		\]
		Since \(I\) is compact and \(\lambda\mapsto d_\lambda^\infty\) is continuous,
		\[
		\inf_{\lambda\in I}d_\lambda^\infty>0 .
		\]
	\end{proof}
	
	\begin{proof}[Proof of \cref{Prop3.1}]
		For each \(\lambda\in I\), choose a least-energy critical point given by \cref{ThmA.1}.  Apply \cref{LemA.2} and \cref{LemA.4} to apply a translation and multiply by a constant phase so that the first component is even and the second component is odd.  In particular,
		\[
		U_{\lambda,2}(0)=0 .
		\]
		The uniform exponential estimate is \cref{LemA.7}, and the compactness is \cref{LemA.8}.  The continuity of the full-line level and the positivity of \(d_I^*\) follow from \cref{LemA.9}.  This proves all assertions of \cref{Prop3.1}.
	\end{proof}
	
	\section*{Acknowledgment}
	We express our gratitude to the anonymous referee for their careful review of our manuscript and the valuable feedback provided for its enhancement.

	\medskip
	{\bf Funding:} This work is supported by National Natural Science Foundation of China (12301145, 12261107, 12561020, 11961081) and Yunnan Fundamental Research Projects (202401AU070123, 202601AT070048). 
	
	\textbf{Author contributions:}
	Zhipeng Yang: Conceptualization, Methodology, Writing--original draft,
	Writing--review and editing.
	
	\medskip
	{\bf Data availability:}  Data sharing is not applicable to this article as no new data were created or analyzed in this study.
	
	\medskip
	{\bf Conflict of Interests:} The author declares that there is no conflict of interest.
	
	\medskip
	{\bf AI assistance statement:}
	The author used AI-assisted tools to support literature searches,
	the checking and revision of mathematical arguments, and language
	editing during the preparation and revision of this manuscript.
	The figures in this manuscript were generated using ChatGPT.
	The author takes full responsibility for the content of this manuscript.

\end{document}